\documentclass[usletter]{amsart} 

\usepackage{amsmath,amsthm,amssymb,amsfonts,mathrsfs,color,hyperref, mathtools,crop, graphicx, enumitem, todonotes}
\usepackage[left=3cm, right=3cm]{geometry}

\theoremstyle{plain}
\begingroup
\newtheorem{theorem}{Theorem}[section]
\newtheorem*{theorem*}{Theorem}
\newtheorem*{"theorem"}{``Theorem''}
\newtheorem{corollary}[theorem]{Corollary}

\newtheorem{lemma}[theorem]{Lemma}
\endgroup

\theoremstyle{definition}
\begingroup
\newtheorem{definition}[theorem]{Definition}
\endgroup

\theoremstyle{remark}
\begingroup
\newtheorem{remark}[theorem]{Remark}
\newtheorem{example}[theorem]{Example}
\endgroup 

\numberwithin{equation}{section}
\setcounter{tocdepth}{2}
\setcounter{secnumdepth}{3} 
 
\newenvironment{pde}{\left\{\begin{array}{rll} } {\end{array}\right.}

 
\newcommand{\N}{\mathbb N} 
 
\newcommand{\Z}{\mathbb Z} 
\newcommand{\R}{\mathbb R} 
\newcommand{\E}{{\mathbb E}}




\renewcommand{\L}{{\mathcal L}}

\newcommand{\C}{{\mathcal C}}

\newcommand{\LRa} {\Leftrightarrow}
\newcommand{\Ra} {\Rightarrow}

\renewcommand{\d}{\,\mathrm{d}}

\newcommand{\dx}{\,\mathrm{d}x}
\newcommand{\dy}{\,\mathrm{d}y}
\newcommand{\dz}{\,\mathrm{d}z}
\newcommand{\ds}{\,\mathrm{d}s}
\newcommand{\dt}{\,\mathrm{d}t}
\newcommand{\dr}{\,\mathrm{d}r}

\newcommand{\ReLU}{{\mathrm{ReLU}}}

\newcommand{\eps}{\varepsilon}
\newcommand{\average}{{\mathchoice {\kern1ex\vcenter{\hrule height.4pt
width 6pt depth0pt} \kern-9.7pt} {\kern1ex\vcenter{\hrule
height.4pt width 4.3pt depth0pt} \kern-7pt} {} {} }}

\allowdisplaybreaks

 \makeatletter
\@namedef{subjclassname@2020}{2020 Mathematics Subject Classification}
\makeatother

\newcommand{\B}{\mathcal{B}}
\renewcommand{\Re}{{\mathrm{Re}}}
\renewcommand{\Im}{{\mathrm{Im}}}

\begin{document}

\title[Shallow ReLU$^\alpha$-networks and the Dirichlet Laplacian]{Solving the Poisson Equation with Dirichlet data by shallow ReLU$^\alpha$-networks: A regularity and approximation perspective}

\author{Malhar Vaishampayan}
\address{Malhar Vaishampayan\\
University of Pittsburgh,
Department of Mathematics,
Thackeray Hall,
Pittsburgh, PA 15213
}
\email{msv17@pitt.eduu}

\author{Stephan Wojtowytsch}
\address{Stephan Wojtowytsch\\
University of Pittsburgh,
Department of Mathematics,
Thackeray Hall,
Pittsburgh, PA 15213
}
\email{s.woj@pitt.edu}

\date{\today}

\subjclass[2020]{}
\keywords{}

\begin{abstract}
For several classes of neural PDE solvers (Deep Ritz, PINNs, DeepONets), the ability to approximate the solution or solution operator to a partial differential equation (PDE) hinges on the abilitiy of a neural network to approximate the solution in the spatial variables. We analyze the capacity of neural networks to approximate solutions to an elliptic PDE assuming that the boundary condition can be approximated efficiently. Our focus is on the Laplace operator with Dirichlet boundary condition on a half space and on neural networks with a single hidden layer and an activation function that is a power of the popular ReLU activation function.
\end{abstract}

\maketitle


\section{Introduction}

Following their success in various fields of data science and machine learning, neural networks have more recently enjoyed success in areas of scientific computing. Various model approaches have been introduced, among them models which solve a single partial differential equation (PDE), such as the Deep Ritz method \cite{yu2018deep, muller2019deep, duan2021convergence, jiao2021error, lu2021priori, dondl2022uniform} and physics-informed neural networks (PINNs, \cite{raissi2019physics, cai2021physics, cuomo2022scientific}), and models which learn the solution operator of a PDE, such as Deep Operator Networks (DeepONets, \cite{lu2019deeponet}) and Neural Operators (e.g.\ Fourier Neural Operators, Low Rank Neural Operators, Graph Neural Operators, \cite{li2020fourier, kovachki2023neural, azizzadenesheli2024neural}). For all of these models, the literature goes far beyond the listed sources.

The Deep Ritz method, PINNs and DeepONets represent the solution $u$ of the PDE by a neural network in the spatial variables $x$ or in space-time variables $(t,x)$. It is therefore important to study the question: In a given classes of partial differential equations, can the solution to a PDE in the class be represented (or approximated efficiently) by a neural networks? 

Classical approaches to solving elliptic second order PDEs split this question into three components:
\begin{enumerate}
    \item Given a model PDE, what is the regularity of solutions? (regularity theory)
    
    \item Given a function in a certain regularity class, how well can its elements be approximated using a model class of parametrized functions? (approximation theory)

    \item Given the solution of a surrogate problem in a model class, how does it compare to the best approximation in the model class to the solution to the original problem? (numerical analysis)
\end{enumerate}

In this work, we approach the first two of these questions for the model problem of the Dirichlet Laplacian:
\begin{equation}\label{eq intro pde}
\begin{pde}
    -\Delta u &= 0 &\text{in }\Omega\\
    u &= g &\text{in }\partial\Omega
\end{pde}
\end{equation}
on the half-plane $\Omega = \R^2_+ = \{(x,y)\in\R^2 : y>0\}$. The function classes which we consider are `Barron spaces': Function spaces specifically designed in such a way that functions in them can be approximated efficiently by shallow neural networks.

A feed-forward neural network (or `multi-layer perceptron') is a function with the architecture
\[
f(x) = T^L \circ \sigma \circ T^{L-1}\circ \sigma \circ\dots \circ \sigma \circ T^1 (x)
\]
where $T^\ell :\R^{d_{\ell-1}}\to \R^{d_\ell}$ is an affine linear map between Euclidean maps and by abuse of notation we consider $\sigma :\R^d\to\R^d$ given by $\sigma(z) = (\sigma(z_1), \dots, \sigma(z_d))$ for any $\sigma:\R\to\R$ and any $d\in\mathbb N$. The parameters of the neural network are the matrix elements of the linear part of the affine maps (`weights') and the vector entries of the affine part (`biases'). The function $\sigma :\R\to\R$ is called the `activation function' of the neural network and generally assumed to be non-polynomial.

A shallow neural (or `perceptron') is a neural network of depth $L= 2$, i.e.\ with two `layers' $T_1, T_2$. We often write it as
\[
f(x) = \sum_{i=1}^n a_i\,\sigma(w_i^Tx+b_i) + B, \qquad w_i \in\R^d, \:\:a_i, b_i, B\in \R
\]
and commonly assume that $B=0$. The focus of this article is on the shallow case.

One of the most popular activation functions in modern deep learning is the rectified linear unit (ReLU) function $\sigma(z) = \max\{z,0\}$. It enjoys several major benefits: From a practical perspective, it can be evaluated significantly faster than more involved activation functions like $\tanh$, and it mitigates the `vanishing gradients phenomenon' since its derivative does not approach zero at infinity. Additionally, the positive homogeneity of $\sigma$ facilitates a theoretical analysis in several ways.

For neural PDE solvers, however, $\sigma$ may not be the function of choice. The strong form of a second order PDE for instance cannot be evaluated on the piecewise linear functions represented by a ReLU-network, for which the second order derivatives are measures localized on the cell edges where the network is non-smooth. We therefore consider more generally powers $\sigma_\alpha(z) = \sigma(z)^\alpha$ of the ReLU activation (also sometimes referred to as RePU or rectified power unit activations).

The function class $\Sigma_m$ of shallow neural networks of `width' $m$ is a non-linear subset of various function spaces (depending on the activation function $\sigma$). It is well known that the closure of $\bigcup_{m=0}^\infty \Sigma_m$ in e.g.\ $C^0(K)$ coincides with the whole space $C^0(K)$ \cite{cybenko1989approximation, hornik1991approximation, hornik1993some, park2024qualitative}. However, approximating a general continuous function requires very large `weights' $(a_i, w_i, b_i)$, which complicates both the training of networks and finding statistical generalization bounds \cite{wojtowytsch2020can}. Barron function classes correspond to finding a closure while keeping the coefficients bounded in a suitable sense. The natural bound depends on the choice of $\sigma$. For $\sigma_\alpha$, the potentially most natural controlled quantity is $\sum_{i=1}^m |a_i| \big(|w_i|+|b_i|\big)^\alpha$.

Solving elliptic PDEs in Barron spaces has been considered previously for instance in \cite{chen2021representation}. Schr\"odinger eigenvalue problems with periodic boundary conditions have been studied in the related `spectral Barron class' \cite{lu2022priori}.

In \cite{wojtowytsch2020some}, the authors study the Laplace equation both on the whole space (with non-zero right hand side) and in a finite domain (with right hand side zero). On the whole space, it is easy to see that a solution $u(x) = \frac{a}{(\alpha+2)(\alpha+1)\,\|w\|^2}\,\sigma_{\alpha+2}(w^Tx+b)$ to the equation $\Delta u = a\,\sigma_\alpha(w^Tx+b)$ can be represented by a single neuron activation with a higher power $\alpha+2$ of ReLU, but that it cannot be approximated by any finite superposition
\[
u_\alpha(x)= \sum_{i=1}^n a_i\,\sigma_\alpha(w_i^Tx+b_i)
\]
globally since the right hand side grows at most as $|x|^\alpha$ at infinity while any solution to the PDE has to grow at least as $|x|^{\alpha+2}$. When changing the activation function from $\sigma_\alpha$ (representing the right hand side $f$) to $\sigma_{\alpha+2}$ (representing the solution $u$), complications remain due to the fact that the natural `norm' or coefficient bound for the solution network scales as
\[
\frac{|a|}{|w|^2}\big(|w|+|b|\big)^{\alpha+2} \sim |a| \,|w|^\alpha + |a| \,\frac{|b|^{2+\alpha}}{|w|^2}\qquad\text{rather than}\quad |a|\big(|w|+|b|\big)^\alpha,
\]
i.e.\ the control of the magnitude of the bias variable is mismatched.
For ReLU activation, a version of Barron spaces in which the magnitude of the bias is unconstrained is available with subtle differences between the two function classes \cite{wojtowytsch2022optimal, boursier2023penalising}. To the best of our knowledge, this has not been extended to $\ReLU^\alpha$-activation.

There is no difference between the two function classes when we only consider the functions on {\em bounded} sets, but other difficulties may arise. In \cite{wojtowytsch2020some}, the authors demonstrate that harmonic functions with boundary values in Barron space are generally {\em not} Barron functions up to the boundary. The counterexample only applies in dimension $d\geq 3$, and a much more detailed analysis can be found in a forth-coming article \cite{pde_relu}. In particular, we can see that, heuristically speaking, a harmonic function with boundary values in Barron space only fails to be Barron-regular `by a logarithm'. This resembles the logarithmic failure of regularity for instance of harmonic functions with Lipschitz-continuous boundary values \cite{hile1999gradient}.

It is well-known that elliptic regularity theory fails in certain function classes of integer order, such as spaces of continuously differentiable functions $C^k = C^{k,0}$ and spaces of functions with Lipschitz-continuous derivatives $C^{k,1}$. As a remedy, spaces of H\"older-continuous functions $C^{k,\alpha}$ are considered with a fractional smoothness parameter $\alpha\in(0,1)$. Analogously, we can ask:

\begin{enumerate}
    \item Is there a qualitative difference between the regularity theory for \eqref{eq intro pde} in Barron spaces associated to $\ReLU^\alpha$-activation for integer and non-integer powers $\alpha$?

    \item For integer powers $\alpha = k$, is the failure of regularity merely logarithmic also for $k\geq 2$?
\end{enumerate}

We answer the first question in the affirmative when contrasting $\alpha\in(0,1)$ with $\alpha \in\mathbb N$, but it remains open for $\alpha\in(1,\infty)\setminus \mathbb N$.

The second question is possibly more important from the standpoint of applications since $\sigma_\alpha$ loses one of its main advantages when $\alpha\notin\N$: The speed with which $\sigma_\alpha$ and $\sigma_\alpha'$ can be evaluated is much higher than that for other activation functions only if $\alpha$ is a (small) integer. We give a positive answer also here (in a sense made precise below), for any $\alpha = k\in\mathbb N$.

While we present our analysis in the half-plane $\R^2_+$, the results remain valid more generally in half-spaces $\R^d_+ =\{ (\hat x, x_d) \in\R^d : x_d>0\}$ since the boundary condition $\sigma_\alpha(w^Tx+b)$ is {\em one-dimensional}. Since the Laplacian is rotationally invariant, we may assume without loss of generality that $w = (w_1, 0, \dots, 0)$. We may then use $u_d (x) = u_2(w_1x_1+b, w_1x_d)$ where
\[
\begin{pde}
-\Delta u_2 &= 0 &\text{in }\R^2_+\\
u &= \sigma_k(x_1) &\text{on }\partial\R^2_+
\end{pde}
\qquad\Ra\quad 
\begin{pde}
-\Delta u_d &= 0 &\text{in }\R^d_+\\
u &= \sigma_k(w^Tx+b) &\text{on }\partial\R^d_+.
\end{pde}
\]
General Barron functions are (continuous) superpositions of functions of this type.

The article is organized as follows. In Section \ref{section barron background}, we review the theory of Barron spaces and their relationships with classical function spaces. In Section \ref{section pde}, we analyze \eqref{eq intro pde} in Barron spaces for small non-integer powers of ReLU activation (Section \ref{section fractional alpha}) and for integer powers of ReLU (Section \ref{section integer alpha}). Open problems are collected in Section \ref{section conclusion}. Some technical details are postponed until the appendix.

\subsection{Concepts and Notation}
We use spaces of Sobolev functions $W^{k,p}$ and H\"older continuous functions $C^{k,\alpha}$ as introduced e.g.\ in \cite{dobrowolski2010angewandte} or other textbooks without further comment.

All measures on Euclidean spaces are measures on the Borel $\sigma$-algebra. If $\mu$ is a measure on $\mathcal X$ and $f:\mathcal X\to \mathcal Y$ is a measurable map, we define the push-forward measure $f_\sharp\mu$ on $\mathcal Y$ by $f_\sharp\mu(B) = \mu(f^{-1}(B))$. 

Lebesgue measure on $\R^d$ is denoted by $\L^d$. If a measure $\mu_1$ has a density $\rho$ with respect to a second measure $\mu_2$, we also write $\mu_1 = \rho\cdot\mu_2$ as short-hand.

For future use, we recall {\em Liouville's Theorem} in a slightly stronger version than commonly presented in courses on partial differential equations. While it is well-known to experts in the field, we sketch the proof for the readers convenience.

Denote $\R^n_+ = \{x\in\R^n : x_n>0\}$ and $x = (\hat x, x_n)$. We write $B_r(x)$ for the open Euclidean ball of radius $r$ centered at $x$ and $B_r:= B_r(0)$.

\begin{theorem}[Liouville] \label{theorem liouville}
Let $u:\R^n \to \R$ be a harmonic function or $u\in C^2(\R^n_+) \cap C^0(\overline{\R^n_+})$ harmonic in $\R^n_+$ a function with boundary values $u(\hat x, 0) = 0$ on $\partial\R^n_+$. 

If there exists $k\in \N$ such that
\[
\lim_{r\to \infty}\frac{\sup_{x\in B_r(0)} |u(x)|}{r^{k+1}} = 0,
\]
then $u$ is a harmonic polynomial of degree at most $k$.
\end{theorem}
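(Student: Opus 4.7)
My plan is to reduce the half-space statement to the whole-space statement via Schwarz reflection, and then to invoke the classical Cauchy-type interior derivative estimates for harmonic functions.

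\emph{Whole-space case.} The standard interior derivative estimate states that for any harmonic function $u$ on $\R^n$, multi-index $\beta$, and ball $B_R(x_0)$ there is a constant $C_{n,|\beta|}$ with
\[
|D^\beta u(x_0)|\leq \frac{C_{n,|\beta|}}{R^{|\beta|}}\,\sup_{B_R(x_0)}|u|.
\]
I would apply this with $|\beta|=k+1$ for $x_0$ fixed and let $R\to\infty$. Since $B_R(x_0)\subset B_{R+|x_0|}(0)$, the right-hand side is bounded above by
\[
C_{n,k+1}\,\frac{(R+|x_0|)^{k+1}}{R^{k+1}}\cdot\frac{\sup_{B_{R+|x_0|}(0)}|u|}{(R+|x_0|)^{k+1}},
\]
which tends to $0$ by the growth hypothesis. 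Hence every partial derivative of order $k+1$ vanishes identically, so $u$ agrees with its Taylor polynomial of degree $k$ at the origin, a polynomial which is harmonic by hypothesis.

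\emph{Half-space case.} Given $u\in C^2(\R^n_+)\cap C^0(\overline{\R^n_+})$ with zero Dirichlet data, I would introduce the odd reflection
\[
\tilde u(\hat x,x_n)=\begin{cases}u(\hat x,x_n)&x_n\geq 0,\\-u(\hat x,-x_n)&x_n<0,\end{cases}
\]
which is continuous on $\R^n$ precisely because $u$ vanishes on $\partial\R^n_+$, and harmonic on each open half-space. To promote harmonicity across $\{x_n=0\}$, I would invoke Schwarz reflection: on any ball $B_r(x_0)$ centered at a boundary point, the Poisson integral $v$ of the continuous boundary data $\tilde u|_{\partial B_r(x_0)}$ is odd in $x_n$ (the data being odd), hence vanishes on the equatorial disc. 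On the upper half-ball, $v$ and $\tilde u=u$ are then two harmonic functions with matching continuous boundary values, so the maximum principle forces $v=\tilde u$ there; the same argument applies on the lower half-ball, so $\tilde u=v$ is harmonic on all of $B_r(x_0)$. The growth hypothesis passes to the extension since $\sup_{B_R(0)}|\tilde u|=\sup_{B_R(0)\cap\overline{\R^n_+}}|u|$, so the whole-space case yields that $\tilde u$ is a harmonic polynomial of degree at most $k$, and restriction to $\overline{\R^n_+}$ concludes.

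\emph{Main obstacle.} All ingredients are textbook. The one point that needs care is precisely what distinguishes the statement from weaker versions: the little-$o$ hypothesis, rather than big-$O$, is what forces the $(k+1)$st derivatives to vanish identically rather than merely be bounded, pinning the degree bound at $k$ and not $k+1$. Everything else is bookkeeping.
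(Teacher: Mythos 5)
Your proposal is correct and follows essentially the same route as the paper: interior derivative estimates of order $k+1$ combined with the little-$o$ growth hypothesis to kill all $(k+1)$st derivatives, and odd reflection to reduce the half-space case to the whole-space case. The only cosmetic difference is that you justify harmonicity of the odd extension via the Poisson-integral/maximum-principle form of the Schwarz reflection principle, whereas the paper appeals to the mean value characterization; both are standard and equally valid.
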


\begin{proof}
    The proof combines the ideas of \cite[Theorem 2.2.8]{evans} and  \cite[Exercise 2.5.9]{evans}. Namely, if $u$ is defined on the entire space and $\alpha$ is a multi-index of order $k+1$, then using \cite[Theorem 2.2.7]{evans}, we find that for all $r>0$:
    \[
    |D^{\alpha}\,u(x)| \leq \frac{C_k}{r^{n+k+1}} \int_{B_r(x)} |u(y)|\dy \leq \frac{C_k}{r^{n+k+1}} \int_{B_r(x)} \sup_{z\in B_{r + \|x\|}} |u(z)| \dy = C_k\,\frac{\omega_nr^n}{r^n} \,\frac{\sup_{z\in B_{r + \|x\|}} |u(z)|}{r^{k+1}}
    \]
    where $\omega_n$ denotes the volume of the $n$-dimensional unit ball. For $r\geq \|x\|$, we can replace
    \[
    \frac{\sup_{z\in B_{r + \|x\|}} |u(z)|}{r^{k+1}} \leq \frac{\sup_{z\in B_{2r}} |u(z)|}{r^{k+1}} = 2^{k+1} \frac{\sup_{z\in B_{2r}} |u(z)|}{(2r)^{k+1}} \to 0
    \]
    as $r\to\infty$. Hence $D^\alpha u(x) = 0$ for any multi-index $\alpha$ of order $k+1$ and thus $D^\alpha u\equiv 0$, meaning that $u$ is a polynomial of degree at most $k$. 
    
    If $u$ is defined on the half-space, we see that its extension by odd reflection $u(\hat x, x_n) = - u(\hat x, -x_n)$ for $x_n$ is a harmonic function on the entire space due to the mean value characterization of harmonic functions. Thus, the same argument can be used.
\end{proof}

\section{\texorpdfstring{Barron spaces for ReLU$^\alpha$-activation}{Generalized Barron spaces}}\label{section barron background}

\subsection{Construction}
A neural network with a single hidden layer of width $n \in\mathbb N$ and activation function $\sigma:\R\to\R$ can be written as
\[
f_n(x) = \frac1n\sum_{i=1}^n a_i\,\sigma(w_i^Tx+b_i)
\]
where the parameters $a_i\in\R$, $w_i\in\R^d$ are the weights of the network and $b_i$ its biases. The normalizing factor $1/n$ is commonly included in theoretical analyses and ignored in practical implementations. Its presence is inconsequential, but allows for a simpler presentation -- see also \cite{representationformulas} for a discussion of equivalent parametrizations.

Barron spaces are function classes designed to include the infinite width limits of neural networks with a single hidden layer whose coefficients remain bounded. We explore these spaces mostly in the setting of activation functions which are powers of the popular ReLU activation. More general cases are considered in \cite{wojtowytsch2021kolmogorov, heeringa}.

Let $\pi$ be a probability measure on $\R\times\R^d\times \R$. We denote
\begin{equation}\label{eq f pi alpha}
f_{\pi,\alpha}:\R^d\to\R, \qquad f_{\pi,\alpha}(x) = \E_{(a,w,b)\sim\pi} \big[a\,\sigma_\alpha(w^Tx+b)\big]
\end{equation}
where
\[
\sigma_\alpha:\R\to\R, \qquad \sigma_\alpha(z) = \ReLU^\alpha(z) = \max\{z,0\}^\alpha.
\]
For a fixed point $x\in\R^d$, the function $(a,w,b)\mapsto a\,\sigma_\alpha(w^Tx+b)$ is continuous, hence Lebesgue measurable. It is Lebesgue integrable  for instance if
\[
 \E_{(a,w,b)\sim\pi} \big[|a|\,\sigma_\alpha(w^Tx+b)\big] \leq \E\big[ |a|\,\big|w^Tx+b\big|^\alpha] \leq \max\{1,|x|\big\}^\alpha\,\E\big[|a| \big(|w|+|b|\big)^\alpha] < \infty.
\]

\begin{lemma}
    Let $\pi$ be a probability measure on $\R\times\R^d\times \R$ such that
    \[
    \E\big[|a| \big(|w|+|b|\big)^\alpha] < \infty.
    \]
    Then the function $f_{\pi,\alpha}$ defined in \eqref{eq f pi alpha} is continuous.
\end{lemma}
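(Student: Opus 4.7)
The plan is to establish continuity via the Dominated Convergence Theorem applied to the integral defining $f_{\pi,\alpha}$. I would fix an arbitrary point $x_0\in\R^d$ together with a radius $R>|x_0|$, and show continuity of $f_{\pi,\alpha}$ on the ball $B_R(0)$; since $R$ is arbitrary, this yields continuity on all of $\R^d$.

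First, I would record the pointwise convergence: for any sequence $x_n \to x$ in $\R^d$, the map $z\mapsto \sigma_\alpha(z) = \max\{z,0\}^\alpha$ is continuous on $\R$ (for $\alpha>0$), and $w^Tx_n+b\to w^Tx+b$ for every fixed $(w,b)$. Hence
\[
a\,\sigma_\alpha(w^Tx_n+b)\ \longrightarrow\ a\,\sigma_\alpha(w^Tx+b)\qquad\text{for }\pi\text{-a.e.\ }(a,w,b).
\]

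Second, I would produce a $\pi$-integrable dominating function uniformly for $x\in B_R(0)$. Using $\sigma_\alpha(z)\leq |z|^\alpha$ together with $|w^Tx+b|\leq R|w|+|b|\leq \max\{R,1\}\big(|w|+|b|\big)$, one obtains
\[
\bigl|a\,\sigma_\alpha(w^Tx+b)\bigr|\ \leq\ \max\{R,1\}^\alpha\,|a|\,\bigl(|w|+|b|\bigr)^\alpha \qquad \text{for all }x\in B_R(0),
\]
and the right-hand side is $\pi$-integrable by the hypothesis $\E\big[|a|(|w|+|b|)^\alpha\big]<\infty$.

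Finally, the Dominated Convergence Theorem gives $f_{\pi,\alpha}(x_n)\to f_{\pi,\alpha}(x)$ for any sequence $x_n\to x\in B_R(0)$, which establishes continuity. There is no real obstacle here; the only point worth highlighting is the elementary algebraic estimate $R|w|+|b|\leq \max\{R,1\}(|w|+|b|)$, which is what lets the assumed moment bound — stated with $|w|+|b|$ rather than $|w|\,|x|+|b|$ — serve as a uniform dominator on bounded sets in $x$.
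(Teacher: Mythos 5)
Your proof is correct and takes essentially the same route as the paper's: pointwise convergence plus the uniform dominator $C(R)\,|a|\,(|w|+|b|)^\alpha$ on a bounded set of $x$-values, followed by dominated convergence. The paper phrases it in terms of a bounded convergent sequence rather than a fixed ball $B_R(0)$, and uses $(1+R)^\alpha$ in place of your $\max\{R,1\}^\alpha$, but these are cosmetic differences only.
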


\begin{proof}
    Let $x_n\to x$ in $\R^d$. Then $a\,\sigma_\alpha(w^Tx_n+b) \to a\,\sigma_\alpha(w^Tx+b)$ for all $(a,w,b)\in\R\times\R^d\times \R$. The sequence $x_n$ is bounded, so there exists $R>0$ such that
    \[
    \big|a\,\sigma_\alpha(w^Tx_n+b)\big|\leq |a|\,\big|w^Tx_n+b\big|^\alpha \leq |a|\big(|w|\,|x_n|+|b|\big)^\alpha \leq (1+R)^\alpha |a|\,\big(|w|+|b|\big)^\alpha
    \]
    for all $n$. By the dominated convergence theorem, we conclude that
    \[
    f_\pi(x_n) = \E_{(a,w,b)\sim\pi} \big[a\,\sigma(w^Tx_n+b)\big]
    \to  \E_{(a,w,b)\sim\pi} \big[a\,\sigma(w^Tx+b)\big] = f_\pi(x).\qedhere
    \]
\end{proof}

More quantitative statements are given in Section \ref{section embeddings}.

\begin{remark}
    With a little care, much of what we do also applies to the Heaviside activation function $\sigma_0(z) = 1_{\{z>0\}}$. Naturally, the represented functions are no longer continuous, but bounded and Borel-measurable. 
\end{remark}

The map $\pi \mapsto f_{\pi,\alpha}$ is not injective -- in fact, any measure $\pi$ which is invariant under the reflection map $(a,w,b)\mapsto(-a,w,b)$ represents the function $f_{\pi,\alpha}\equiv 0$. Further sources of non-uniqueness are explored in \cite{representationformulas}. We therefore make the following definition.

\begin{definition}[$\alpha$-Barron norm and space]
    Let $D\subseteq \R^d$ be a subset. For a continuous function $f:A\to\R$, we define
    \[
    \|f\|_{\B_\alpha(D)} = \inf \left\{\E_{(a,w,b)\sim\pi}\big[|a|\big(|w|+|b|\big)^\alpha\big] : f(x) = f_{\pi,\alpha}(x) \text{ for all } x\in D\right\}
    \]
    and
    \[
    \B_\alpha(D) = \{ f \in C^0(D) : \|f\|_{\B_\alpha(D)} < \infty\}.
    \]
\end{definition}

We call $\B_\alpha(D)$ the $\alpha$-Barron space over $D$. If $D=\R^d$ for some $d\in\N$, we simply denote $\B_\alpha(D) =: \B_{\alpha,d}$ or $\B_\alpha$ if $d$ is clear from context.

\begin{lemma}\cite[Lemma 1]{siegel2023characterization}
    Let $D\subseteq\R^d$. Then $\B_\alpha(D)$ is a Banach space.
\end{lemma}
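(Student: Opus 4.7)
The plan is to verify the three norm axioms and then establish completeness by the usual rapidly-convergent-subsequence argument, exploiting that representing probability measures on $\R\times\R^d\times\R$ can be freely rescaled in the $a$-coordinate and mixed convexly. All of the steps rely on the same two primitive operations: pushing $\pi$ forward under the rescaling map $\tau_\lambda\colon(a,w,b)\mapsto(\lambda a,w,b)$, which sends $f_{\pi,\alpha}\mapsto \lambda\,f_{\pi,\alpha}$ and scales the cost $\E_\pi[|a|(|w|+|b|)^\alpha]$ by $|\lambda|$; and taking convex combinations of probability measures, under which both $f_{\pi,\alpha}$ and the cost transform linearly.

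From these, homogeneity is immediate. For the triangle inequality, given $f_1,f_2$ and $\eps>0$, I pick near-optimal representing measures $\pi_i$ of $f_i$, double the $a$-coordinate in each, and form the even mixture $\pi := \tfrac12(\tau_2)_\sharp\pi_1+\tfrac12(\tau_2)_\sharp\pi_2$, which represents $f_1+f_2$ at cost $\E_{\pi_1}[|a|(|w|+|b|)^\alpha]+\E_{\pi_2}[|a|(|w|+|b|)^\alpha]\leq \|f_1\|_{\B_\alpha(D)}+\|f_2\|_{\B_\alpha(D)}+2\eps$. Positive definiteness follows from the pointwise estimate $|f_{\pi,\alpha}(x)|\leq \max\{1,|x|\}^\alpha\,\E_\pi[|a|(|w|+|b|)^\alpha]$ displayed just before the lemma: if $\|f\|_{\B_\alpha(D)}=0$, the right-hand side can be driven to zero at each fixed $x\in D$, forcing $f\equiv 0$.

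For completeness, given a Cauchy sequence $(f_n)\subset\B_\alpha(D)$, I extract a subsequence with $\|f_{n_{k+1}}-f_{n_k}\|_{\B_\alpha(D)}<2^{-k}$, and for each $k$ fix a probability measure $\pi_k$ representing $g_k := f_{n_{k+1}}-f_{n_k}$ with cost less than $2^{-k+1}$. Choosing weights $c_k:=2^{-k}$, I set $\tilde\pi_k := (\tau_{1/c_k})_\sharp\pi_k$ and define $\mu := \sum_k c_k\,\tilde\pi_k$, a probability measure. The candidate limit is $f := f_{n_1}+f_{\mu,\alpha}$. Using Tonelli (justified by the pointwise bound above and by $\sum_k \E_{\pi_k}[|a|(|w|+|b|)^\alpha]<\infty$), one identifies $f_{\mu,\alpha}(x)=\sum_k g_k(x)$ for every $x\in D$, so $f$ is the pointwise limit of the telescoping subsequence $f_{n_K}$. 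Applying the same gluing construction to the tails $\sum_{k\geq K} g_k$ gives $\|f-f_{n_K}\|_{\B_\alpha(D)}\leq 2^{-K+2}\to 0$, and the Cauchy condition then upgrades this to convergence of the full sequence.

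The only non-routine step is the Tonelli interchange identifying $f_{\mu,\alpha}$ with the pointwise sum $\sum_k g_k$: it requires checking that the dominator $\max\{1,|x|\}^\alpha|a|(|w|+|b|)^\alpha$ is $\mu$-integrable for each fixed $x\in D$, which reduces to the geometric summability of the costs together with the pointwise a priori bound from the positive-definiteness step. Once that identification is in place, the rest is bookkeeping and Banach-space boilerplate.
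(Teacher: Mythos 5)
The paper does not prove this lemma; it is stated as a citation to \cite[Lemma 1]{siegel2023characterization}, so there is no in-paper proof to compare your argument against. Judged on its own merits, your proof is correct and self-contained.

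The two manipulations you rely on are exactly the right ones. The rescaling push-forward $\tau_\lambda$ gives homogeneity (for $\lambda<0$ the reflection $(a,w,b)\mapsto(-a,w,b)$ handles the sign), and the convex-mixture-after-rescaling trick gives the triangle inequality; positive definiteness follows from the pointwise bound $|f_{\pi,\alpha}(x)|\leq\max\{1,|x|\}^\alpha\,\E_\pi\big[|a|(|w|+|b|)^\alpha\big]$ stated just before the lemma, applied at each $x\in D$. For completeness, the rapidly-convergent-subsequence argument is the standard one for variation-type norms, and your gluing construction $\mu=\sum_k c_k(\tau_{1/c_k})_\sharp\pi_k$ with $c_k=2^{-k}$ is carried out correctly: $\mu$ is a probability measure, its cost is $\sum_k\E_{\pi_k}[|a|(|w|+|b|)^\alpha]<\infty$, and this finiteness is precisely what licenses the Tonelli interchange that identifies $f_{\mu,\alpha}=\sum_k g_k$ pointwise on $D$. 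Applying the same construction to the tails $\sum_{k\geq K}g_k$ gives the $2^{-K+2}$ bound, and continuity of the candidate limit $f=f_{n_1}+f_{\mu,\alpha}$ follows from the continuity lemma preceding the definition. One minor bookkeeping caveat: you should start the index at $k=1$ so that $\sum_k c_k=1$, and note that the full-sequence convergence step uses both the Cauchy property and the subsequence bound, which you do mention. In short, nothing is missing; this is a clean, elementary proof of a fact the paper delegates to a reference.
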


Other classes of Barron spaces have been considered in the literature where only the magnitude of the `weight' variable $w$, but not the magnitude of the bias $b$ is controlled. Some technical difficulties arise, and subtly different geometric phenomena can be observed \cite{wojtowytsch2022optimal, boursier2023penalising}.

The function class that  we call `Barron space' here is also referred to as $\mathcal F_1$ or Radon-BV in the case $\alpha=1$, or, more technically, as the `variation space of the ReLU$^\alpha$-dictionary.' Sometimes, it is referred to as the `representational Barron space' to distinguish it from the `Barron class' or `spectral Barron space'. The ambiguity arises as the seminal article in the field \cite{barron1993} introduces versions of both function classes and proves the embedding of one into the other, giving rise to two notions of what `Barron space' should mean.

For us, Barron space will always be the {\em representational} Barron space.

\subsection{Function space embeddings}\label{section embeddings}
Let us observe some relationships between Barron spaces and more classical function classes.

\begin{lemma}\cite[Proposition 2.1]{heeringa}
    Let $R<\infty$ and $0<\beta<\alpha$. Assume that $D\subseteq \overline{B_R(0)}$ in $\R^d$. Then $\B_\alpha (D) \hookrightarrow \B_\beta(D)$ and
    \[
    \|f\|_{\B_\alpha(D)} \leq C_{\alpha,\beta, R}\,\|f\|_{\B_\beta(D)}.
    \]
\end{lemma}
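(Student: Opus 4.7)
The plan is to derive a $\sigma_\beta$-representation of $f$ from any $\sigma_\alpha$-representation by expressing each $\sigma_\alpha$-neuron as a superposition of translated $\sigma_\beta$-neurons through a Riemann--Liouville-type identity, and then to use the boundedness of $D$ to show that the resulting $\beta$-Barron cost stays bounded by a multiple of the $\alpha$-Barron cost.

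I would begin by establishing the identity
\[
\sigma_\alpha(t) \;=\; C_{\alpha,\beta} \int_0^\infty s^{\alpha-\beta-1}\,\sigma_\beta(t-s)\,\d s,\qquad C_{\alpha,\beta}=\frac{\Gamma(\alpha+1)}{\Gamma(\alpha-\beta)\,\Gamma(\beta+1)},
\]
for all $t\in\R$: both sides vanish for $t\le 0$, while for $t>0$ the substitution $s=tu$ converts the right-hand side into the Beta integral $t^\alpha\,B(\alpha-\beta,\beta+1)$. The hypothesis $\alpha>\beta>0$ is exactly what is needed for convergence at $s=0$.

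Next, I would pick a near-optimal representation $\pi$ with $f=f_{\pi,\alpha}$ on $D$ and $\E_\pi[|a|(|w|+|b|)^\alpha]\le\|f\|_{\B_\alpha(D)}+\eps$. For $x\in D\subseteq\overline{B_R(0)}$ one has $(w^Tx+b)_+\le |w|R+|b| =: M_{w,b}$, so the integrand $s^{\alpha-\beta-1}\sigma_\beta(w^Tx+b-s)$ vanishes for $s>M_{w,b}$. Fubini then yields, for every $x\in D$,
\[
f(x) \;=\; C_{\alpha,\beta}\int\!\!\int_0^{M_{w,b}} a\,\sigma_\beta(w^Tx+b-s)\,s^{\alpha-\beta-1}\,\d s\,\d\pi(a,w,b),
\]
which I would reinterpret as a $\sigma_\beta$-integral representation by pushing the finite positive measure $s^{\alpha-\beta-1}1_{[0,M_{w,b}]}(s)\,\d s\otimes\d\pi$ forward under $(a,w,b,s)\mapsto(a,w,b-s)$, normalizing to a probability measure $\tilde\pi$, and rescaling the $a$-coordinate so as to absorb the normalization together with the factor $C_{\alpha,\beta}$. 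Using $|b-s|\le|b|+M_{w,b}\le (R+1)(|w|+|b|)$ and $\int_0^{M_{w,b}} s^{\alpha-\beta-1}\,\d s = M_{w,b}^{\alpha-\beta}/(\alpha-\beta)$, the resulting $\beta$-Barron cost satisfies
\[
\E_{\tilde\pi}\bigl[|a|(|w|+|b|)^\beta\bigr] \;\le\; \frac{C_{\alpha,\beta}\,(R+2)^\beta\,(R+1)^{\alpha-\beta}}{\alpha-\beta}\,\E_\pi\bigl[|a|(|w|+|b|)^\alpha\bigr],
\]
and passing $\eps\to 0$ gives $\|f\|_{\B_\beta(D)}\le C_{\alpha,\beta,R}\,\|f\|_{\B_\alpha(D)}$.

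The main obstacle I anticipate is the measure-theoretic bookkeeping for $\tilde\pi$: commuting the $s$- and parameter-integrations via Fubini, confirming that the push-forward and renormalization yield a genuine probability measure, and tracking how the rescaling of $a$ preserves the cost estimate. Degenerate neurons (e.g.\ $w=0,\,b\le 0$, where the inner integral vanishes identically) should be handled as zero contributions and removed from the support of $\tilde\pi$. The analytic estimates themselves are routine once one observes that $D\subseteq\overline{B_R(0)}$ is precisely what truncates the otherwise-divergent tail of the fractional integral at a bias magnitude comparable to $|w|+|b|$, which is the mechanism making the embedding fail on unbounded domains.
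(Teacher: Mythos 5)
The paper does not prove this lemma; it cites \cite[Proposition 2.1]{heeringa}, so there is no internal argument to compare against. Your fractional Riemann--Liouville identity
\[
\sigma_\alpha(t)=C_{\alpha,\beta}\int_0^\infty s^{\alpha-\beta-1}\sigma_\beta(t-s)\,\d s,\qquad C_{\alpha,\beta}=\tfrac{\Gamma(\alpha+1)}{\Gamma(\alpha-\beta)\Gamma(\beta+1)},
\]
is correct (Beta integral after $s=tu$; both sides vanish for $t\le 0$), the truncation at $M_{w,b}=|w|R+|b|$ is exactly what the boundedness of $D$ supplies, Fubini is justified by the absolute bound $|a|\,\sigma_\beta(\cdot)\,s^{\alpha-\beta-1}\le |a|\,M_{w,b}^\beta\,s^{\alpha-\beta-1}$, and your cost estimate $\lesssim_{\alpha,\beta,R}\E_\pi[|a|(|w|+|b|)^\alpha]$ goes through. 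This is the standard route to the embedding, so the approach is sound. (Note also that the displayed inequality in the lemma as printed in the paper has the norms reversed relative to the embedding arrow; you have proved the direction consistent with $\B_\alpha\hookrightarrow\B_\beta$, i.e.\ $\|f\|_{\B_\beta(D)}\le C\|f\|_{\B_\alpha(D)}$, which is what is intended.)

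The one genuine gap is in the normalization step, and it is not merely bookkeeping: the measure $s^{\alpha-\beta-1}1_{[0,M_{w,b}]}(s)\,\d s\otimes\d\pi$ is generally \emph{not} finite. Its total mass equals $\tfrac{1}{\alpha-\beta}\E_\pi\big[(|w|R+|b|)^{\alpha-\beta}\big]$, and since the hypothesis only controls $\E_\pi[|a|(|w|+|b|)^\alpha]$, small values of $|a|$ allow $\E_\pi[(|w|+|b|)^{\alpha-\beta}]=\infty$ (e.g.\ $\pi$ concentrated on $(n^{-3},n,n)$ with weights $\sim n^{-2}$ for $\alpha=2$). So ``normalize to a probability measure'' fails as stated. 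The repair is the standard reweighting: let
\[
\rho(a,w,b,s)=|a|\big(|w|+|b-s|\big)^\beta s^{\alpha-\beta-1}1_{[0,M_{w,b}]}(s),\qquad Z=\int\rho\,\d s\,\d\pi,
\]
which your own cost estimate shows to be finite; take $\tilde\pi$ to be the push-forward of the probability measure $\rho\,\d s\,\d\pi/Z$ under
\[
(a,w,b,s)\;\longmapsto\;\Big(C_{\alpha,\beta}\,Z\,\tfrac{\sign(a)}{(|w|+|b-s|)^\beta},\;w,\;b-s\Big),
\]
after discarding the $\rho$-null set of degenerate neurons. Then $\E_{\tilde\pi}[a'\sigma_\beta(w'^Tx+b')]=f(x)$ on $D$ and $\E_{\tilde\pi}[|a'|(|w'|+|b'|)^\beta]=C_{\alpha,\beta}Z$, which is precisely the bound you derived. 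With this substituted for the naive normalization, your proof is complete.
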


Note that, globally, there is no embedding between $\B_\alpha$ and $\B_\beta$: While elements of $\B_\alpha$ are smoother at the origin for $\alpha>\beta$, they also grow faster at infinity in general. Thus, the restriction to a compact set is indeed necessary.

We also establish embeddings into classical function spaces: First for $\alpha\in\R\setminus \N$, then for $\alpha\in\N$.

\begin{theorem}\label{theorem fractional power embedding}
Assume that $\alpha = k+\gamma$ with $k\in \N_0$ and $\gamma\in(0,1)$. Assume that $\Omega\subseteq\R^d$ is a bounded domain.
\begin{enumerate}
\item $\B_\alpha$ embeds into $C^{k,\gamma}(\overline\Omega)$.
\item $\B_\alpha$ embeds into $W^{k+1,p}(\Omega)$ for all $p< (1-\gamma)^{-1}$.
\end{enumerate}
\end{theorem}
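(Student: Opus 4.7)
The plan is to reduce both statements to a single-neuron estimate of the form
\[
\|\phi_{a,w,b}\|_{X(\Omega)}\le C\,|a|\,(|w|+|b|)^\alpha, \qquad \phi_{a,w,b}(x)=a\,\sigma_\alpha(w^T x+b),
\]
with $X\in\{C^{k,\gamma}(\overline\Omega),\,W^{k+1,p}(\Omega)\}$, and then to pass this bound through the expectation defining $f_{\pi,\alpha}$ and through the infimum defining $\|f\|_{\B_\alpha}$.

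For part (1), the chain rule gives, for $|\beta|\le k$,
\[
D^\beta \phi_{a,w,b}(x)=a\,c_{|\beta|}\,(w^T x+b)_+^{\alpha-|\beta|}\,w^\beta,
\]
where the case $|\beta|=k$ is a multiple of the globally $\gamma$-Hölder function $z\mapsto z_+^\gamma$. Using $[z_+^\gamma]_{C^{0,\gamma}(\R)}\le 1$ together with $|w^\beta|\le|w|^{|\beta|}$ and $|w^T x+b|\le \max(R,1)(|w|+|b|)$ on $\overline\Omega\subset\overline{B_R}$, a short calculation bounds $\|\phi_{a,w,b}\|_{C^{k,\gamma}(\overline\Omega)}$ by $C_{R,\alpha}\,|a|\,(|w|+|b|)^\alpha$. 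Dominated convergence with this integrable envelope lets the first $k$ derivatives pass through $\E_\pi$, so the same bound holds for $f_{\pi,\alpha}$; taking the infimum over representations yields (1).

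For part (2), the formal $(k+1)$-th derivative is
\[
D^\beta\phi_{a,w,b}(x)=a\,c_{k+1}\,w^\beta\,(w^T x+b)_+^{\gamma-1},\qquad|\beta|=k+1,
\]
and this is the weak derivative by a standard Fubini argument against test functions (the function is $C^k$ on $\R^d$ and the last derivative agrees classically off the hyperplane $\{w^T x+b=0\}$). Slicing $\Omega$ orthogonally to $w$ and substituting $s=|w|t+b$ produces
\[
\int_\Omega (w^T x+b)_+^{(\gamma-1)p}\dx\le \frac{C_R}{|w|}\int_0^{|w|R+|b|} s^{(\gamma-1)p}\ds,
\]
which is finite precisely when $(\gamma-1)p+1>0$, i.e.\ $p<(1-\gamma)^{-1}$. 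Taking the $p$-th root and tracking exponents produces $|w|^{k+1-1/p}(|w|R+|b|)^{\gamma-1+1/p}$; both exponents are non-negative in our range of $p$ and they sum to $\alpha$, so the bound collapses to
\[
\|D^\beta\phi_{a,w,b}\|_{L^p(\Omega)}\le C_{R,p,\alpha}\,|a|\,(|w|+|b|)^\alpha.
\]
Lower-order derivatives are controlled by part (1) together with $\|\cdot\|_{L^p(\Omega)}\le|\Omega|^{1/p}\|\cdot\|_{L^\infty(\Omega)}$. Minkowski's integral inequality then gives $\|D^\beta f_{\pi,\alpha}\|_{L^p}\le \E_\pi[|a|\,\|D^\beta\phi_{a,w,b}\|_{L^p}]\le C\,\E_\pi[|a|(|w|+|b|)^\alpha]$, and taking the infimum finishes (2).

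The principal obstacle is the exponent bookkeeping in (2): the $1/|w|$ produced by the one-dimensional change of variables must be consumed exactly by the extra power of $|w|$ in $|w^\beta|^p$, and the Hölder exponent $\gamma$ of $\sigma_\alpha^{(k)}$ must recombine with $|w|R+|b|$ to give $(|w|+|b|)^\alpha$. This works out cleanly only in the stated range of $p$, which is also the range where the one-dimensional integral converges — so the integrability obstruction and the successful bookkeeping coincide. Verifying that the formal chain-rule expression really is the distributional derivative is routine but must be stated explicitly before Minkowski's inequality is applied.
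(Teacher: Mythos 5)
Your proof is correct and follows essentially the same route as the paper's: reduce to a single-neuron estimate $\|\phi_{a,w,b}\|_{X(\Omega)}\lesssim|a|(|w|+|b|)^\alpha$ using the $\gamma$-H\"older continuity of $z\mapsto z_+^\gamma$ for the $C^{k,\gamma}$ part and a one-dimensional slicing/change-of-variables for the $W^{k+1,p}$ part, then pass through $\E_\pi$ by dominated convergence (resp.\ Minkowski) and take the infimum over representations. Your exponent bookkeeping in part (2) is in fact tighter than the paper's (which elides the $1/|w|$ Jacobian and the $b$-dependence of the integration limits, though the final bound $|w|^{(k+\gamma)p}$ is still dominated by $(|w|+|b|)^{\alpha p}$), and your explicit remark that the formal $(k+1)$-th derivative must be verified as the weak derivative is a point the paper leaves implicit.
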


\begin{proof}
{\bf H\"older bound.} Note that for $0< x<z$ we have
\[
\sigma_\gamma(z) - \sigma_\gamma(x) = \int_x^z \frac{d}{ds}\sigma_\gamma(s)\ds = \gamma\int_x^z s^{\gamma-1}\ds \leq \gamma \int_{0}^{z-x}s^{\gamma-1}\ds = (z-x)^\gamma
\]
since $\sigma_\gamma'$ is monotone decreasing on $(0,\infty)$. Considering negative numbers separately, we find that $|\sigma_\gamma(x) - \sigma_\gamma(z)|\leq |x-z|^\gamma$ for all $x,z\in\R$. Hence, if $\beta =  (\beta_1, \dots, \beta_d)$ is a multi-index such that $|\beta| = \sum_{i=1}^d |\beta_i| = k$, then
\begin{align*}
    \partial^\beta \,\E_{(a,w,b)\sim\pi}& \big[a\,\sigma_\alpha(w^tx+b)\big] - \partial^\beta \,\E_{(a,w,b)\sim\pi} \big[a\,\sigma_\alpha(w^tz+b)\big]\\
    &= \E_{(a,w,b)\sim\pi} \big[a\,\partial^\beta\big\{\sigma_\alpha(w^tx+b) - \sigma_\alpha(w^Tz+b)\big\}\big]\\
    &= \prod_{i=1}^k (i+\gamma)\E_{(a,w,b)\sim\pi}\big[ a\,w_1^{\beta_1}\dots w_d^{\beta_d}\big\{\gamma\alpha(w^Tx+b) - \sigma_\gamma(w^Tz+b)\big\}\big]\\
    &\leq \prod_{i=1}^k (i+\gamma) \E_{(a,w,b)}\big[|a|\,|w|^k \big|w^Tx+b- w^Tz-b\big|^\gamma\big]\\
    &\leq \prod_{i=1}^k (i+\gamma) \E_{(a,w,b)}\big[|a|\,|w|^{k+\gamma}\big]\,|z-x|^\gamma
\end{align*}
where we can exchange the order of integration by applying the dominated convergence theorem to the difference quotients. Taking the infimum over all distributions $\pi$ representing a function $f\in\B_\alpha$, we find that the $k$-th derivatives of $f$ are $\gamma = \alpha-k$-H\"older continuous with norm at most $C \|f\|_{\B_k}$ for a constant $C$ depending only on $k$.

{\bf Sobolev bound.} Note again that
\[
\frac{d^{k+1}}{ds^{k+1}} = \prod_{i=0}^{k+1}(k+\gamma-i)\,\sigma_{\gamma-1}(s). 
\]
We choose $R>0$ such that $\Omega\subseteq B_R(0)$ and for a multi-index $\beta$ with $|\beta| = k+1$ we bound
\begin{align*}
\int_\Omega |\partial^\beta \sigma_\alpha(w^Tx+b)|^p \dx &\leq C\,|w|^{(k+1)p} \int_{B_R} \sigma_{\gamma-1} (w^Tx+b)^p\dx \leq C|w|^{(k+1)p} \int_{0}^{R} |w|^{(\gamma-1)p} |s|^{(\gamma-1)p}\ds \\
    &\leq C|w|^{(k+\gamma)p} C(p,R)
\end{align*}
for a constant $C(p,R)$ depending only on $p$ and $R$ which is finite if and only if $(\gamma-1)p>-1$, i.e.\ if and only if $p< (1-\gamma)^{-1}$.
\end{proof}

The integer case is slightly different. 

\begin{theorem}\label{theorem integer power embedding}
Assume that $\alpha = k\in\N$. Then $B_\alpha$ embeds continuously into $C^{k-1,1} = W^{k,\infty}$, but not into $C^k$.
\end{theorem}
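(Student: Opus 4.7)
My plan is to rerun the differentiation-under-the-expectation argument from Theorem~\ref{theorem fractional power embedding}, but to exploit that for integer exponent $\alpha=k$ the $(k-1)$-st derivative $\frac{d^{k-1}}{dz^{k-1}}\sigma_k(z) = k!\,\sigma_1(z)$ is globally Lipschitz rather than merely H\"older. Concretely, for any multi-index $\beta$ with $|\beta|=k-1$ the chain rule yields
\[
\partial^\beta\sigma_k(w^Tx+b) \;=\; k!\,w^\beta\,\sigma_1(w^Tx+b),
\]
which is Lipschitz in $x$ with constant $k!\,|w|^k \leq k!\,(|w|+|b|)^k$. After applying dominated convergence (with envelope $|a|(|w|+|b|)^k$, integrable against $\pi$ by hypothesis) to pull the first $k-1$ derivatives inside the expectation, and then taking the infimum over representing measures $\pi$, I obtain $\|f\|_{C^{k-1,1}(\overline\Omega)} \le C_k\,\|f\|_{\B_k}$. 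The identification $C^{k-1,1}(\overline\Omega)=W^{k,\infty}(\Omega)$ on Lipschitz domains is classical (Rademacher's theorem and integration along Lipschitz paths).

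\textbf{Failure of the $C^k$ embedding.} I would present an explicit single-neuron counterexample: $f(x)=\sigma_k(w^Tx+b)$ for any $w\ne 0$, with $b$ chosen so that the kink hyperplane $\{w^Tx+b=0\}$ meets the interior of the given bounded domain $\Omega$. This $f$ is represented by the Dirac measure $\delta_{(1,w,b)}$, so $f\in\B_k$ with $\|f\|_{\B_k}\le (|w|+|b|)^k$. On the other hand, the iterated directional derivative along $w$ equals
\[
(w\cdot\nabla)^{k} f(x) \;=\; k!\,|w|^{2k}\,\mathbf{1}_{\{w^Tx+b>0\}},
\]
which jumps across the hyperplane and hence admits no continuous representative. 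Consequently $f\notin C^k(\overline\Omega)$, while $f$ is obviously $C^{k-1,1}$, confirming sharpness of the upper bound.

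\textbf{Anticipated obstacle.} Neither half of the argument is deep. The only point that warrants real care is the differentiation-under-the-integral step in the upper bound: since $\sigma_k$ is of class $C^{k-1}$ but not $C^k$, I cannot blindly carry a $k$-th derivative inside $\E$. I would instead work at the level of difference quotients of the order-$(k-1)$ derivatives, exactly as in the H\"older proof of Theorem~\ref{theorem fractional power embedding}, invoking the integrable envelope $|a|(|w|+|b|)^k$ furnished by the finite Barron norm. The selection of $b$ in the counterexample is similarly routine, as the domain $\Omega$ is fixed and $b$ is free.
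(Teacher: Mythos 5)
Your proof is correct and follows essentially the same approach as the paper: the paper also asserts that the $C^{k-1,1}$ upper bound "follows exactly as above" (i.e., by the differentiation-under-the-expectation argument of Theorem~\ref{theorem fractional power embedding}) and gives the single-neuron $\sigma_k$ itself as the obstruction to a $C^k$ embedding. Your write-up merely fleshes out both halves with explicit formulas, which the paper leaves implicit.
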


\begin{proof}
The proof that $\B_k$ embeds into $C^{k-1,1}$ follows exactly as above. Even the activation function $\sigma_k$ is not $C^k$-smooth, so $\B_k$ cannot embed into $C^k$.
\end{proof}

There also exist embeddings from classical function spaces into Barron space. 

\begin{theorem}\label{theorem fractional sobolev embedding}
Let $f : \R^d\to\R$, $R>0$ and $t>k + d/2 + 1$. Then there exists $C$ depending on $k, d$ and $R$ and $F\in \B_k(\R^d)$ such that
\[
F\equiv f\text{ on }B_R(0)\qquad\text{and}\quad \|F\|_{\B_k} \leq C\,\|f\|_{H^t(\R^d)}
\]
where $H^t$ is the fractional Sobolev space with integrability $p=2$ and differentiability $t\in\R$.
\end{theorem}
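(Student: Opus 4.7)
The plan is to combine a smooth cutoff with the classical spectral-to-representational Barron passage tailored to the $\sigma_k$-dictionary. First I would localize: choose $\phi \in C_c^\infty(\R^d)$ with $\phi \equiv 1$ on $B_R(0)$ and $\mathrm{supp}\,\phi \subset B_{2R}(0)$, and set $g := \phi f$. Multiplication by a fixed smooth cutoff is a bounded operator on $H^t(\R^d)$, so $\|g\|_{H^t(\R^d)} \leq C(R,t)\,\|f\|_{H^t(\R^d)}$, and $g \equiv f$ on $B_R(0)$ with $g$ compactly supported. It then suffices to produce $F \in \B_k(\R^d)$ with $F \equiv g$ on $B_R(0)$ and $\|F\|_{\B_k} \leq C\|g\|_{H^t(\R^d)}$.

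\textbf{Spectral control.} Since $g$ is compactly supported in $H^t$, Cauchy--Schwarz yields
\begin{align*}
\int_{\R^d}(1+|\xi|)^{k+1}\,|\hat g(\xi)|\,d\xi \leq \Bigl(\int_{\R^d}(1+|\xi|)^{-2(t-k-1)}\,d\xi\Bigr)^{1/2}\|g\|_{H^t(\R^d)},
\end{align*}
and the first factor is finite precisely when $2(t-k-1) > d$, i.e.\ under the assumption $t > k+1+d/2$. This gives a quantitative weighted-$L^1$ control on $\hat g$.

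\textbf{Ridge representation.} Starting from Fourier inversion $g(x) = (2\pi)^{-d/2}\int_{\R^d}\hat g(\xi)\,e^{i\xi\cdot x}\,d\xi$, I would represent each complex exponential as a univariate function of $s = \xi\cdot x$ and apply the order-$k$ Taylor expansion of $s\mapsto e^{is}$ with integral remainder, noting $|s|\leq R|\xi|$ for $x\in B_R$. The polynomial part of degree $\leq k$ lies in $\B_k(\R^d)$ via the identities $(w^Tx+b)^k = \sigma_k(w^Tx+b)+(-1)^k\sigma_k(-w^Tx-b)$ and by taking $b$-derivatives for lower-degree monomials, with Barron weights growing at most as $(1+R)^k(1+|\xi|)^k$. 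The remainder is a superposition of $\sigma_k(\xi\cdot x - t)$ against a density supported in $|t|\lesssim R|\xi|$, i.e.\ ridge units with $|w|=|\xi|$ and $|b|\lesssim R|\xi|$. Assembling all Fourier modes against $\hat g(\xi)\,d\xi$ produces a single parameter distribution $\pi$ on $\R\times\R^d\times\R$ whose weighted moment satisfies $\E_\pi[|a|(|w|+|b|)^k] \lesssim (1+R)^{k+1}\int(1+|\xi|)^{k+1}|\hat g(\xi)|\,d\xi$, which combined with the spectral bound delivers the claimed estimate with $F := f_{\pi,k}$.

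\textbf{Main obstacle.} The delicate step is the third one, namely the bookkeeping in the Taylor-with-integral-remainder expansion of $e^{is}$: I need the factor $(1+|\xi|)^{k+1}$ from the spectral bound (rather than only $(1+|\xi|)^k$) to exactly match the growth of $(|w|+|b|)^k \sim (R|\xi|)^k$ times the one extra factor $R|\xi|$ arising from the length of the bias window. Verifying that degree-$<k$ monomials lie in $\B_k(\R^d)$ with constant that is polynomial in $R$ and linear in the coefficient is elementary but tedious, and it is at this point that the final constant $C$ acquires its explicit dependence on $R$, $k$, and $d$.
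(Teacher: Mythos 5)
Your proposal is correct and follows the same high-level strategy as the paper: bound the weighted $L^1$-norm of the Fourier transform (the ``spectral Barron'' quantity) via Cauchy--Schwarz against the $H^t$-norm, which is exactly what forces the threshold $t>k+1+d/2$, and then pass from spectral control to a $\sigma_k$-ridge representation with controlled moment. The difference is that the paper treats the spectral-to-representational passage as a black box, citing Proposition 4.1 of \cite{heeringa} (and pointing to \cite{caragea2023neural, wu2023embedding, barron1993} for context), whereas you re-derive it from first principles by Taylor-expanding $e^{is}$ to order $k$ with integral remainder and reading off the $\sigma_k$-ridge density — which is precisely the argument underlying the cited result, going back to Barron's Section IX. Your extra cutoff step $g=\phi f$ is harmless and makes the localization to $B_R$ (and hence the $R$-dependence of $C$) explicit, something the paper leaves implicit inside the cited proposition. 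What your route buys is self-containedness and a transparent source for the $R$- and $k$-dependence of the constant; what the paper's route buys is brevity. The one place you correctly flag as tedious — bounding the $\B_k(B_R)$-norm of the degree-$\leq k$ polynomial part with an explicit $R$-dependence — is indeed the bookkeeping step that the cited proposition packages up, and should be carried out carefully if this were to be written out in full (finite differences in $b$ for the lower-degree monomials, keeping track of the $R$-scaling of the coefficients).
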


\begin{proof}
By \cite[Proposition 4.1]{heeringa} (see also \cite[Propositions 7.1 and 7.4]{caragea2023neural} for optimality and \cite{wu2023embedding}), the bound
\[
\|f\|_\B \lesssim \int_{\R^d} \big(1+|\xi|^2\big)^{(k+1)/2}\,|\hat f(\xi)|\d\xi
\]
holds with a constant that may depend on $d, k$, but not on the function $f:\R^d\to\R$. Here $\hat f$ denotes the Fourier transform of $f$. Following the outline of \cite[Section IX]{barron1993}, we estimate by H\"older's inequality
\begin{align*}
\|f\|_\B &\lesssim \int_{\R^d} \big(1+|\xi|^2\big)^{-s} \big(1+|\xi|^2\big)^{(k+1)/2+s}\,|\hat f(\xi)|\d\xi\\
    &\leq \left(\int_{\R^d} \big(1+|\xi|^2\big)^{k+1+2s}\,|\hat f(\xi)|^2\d\xi\right)^\frac12 
    \left(\int_{\R^d} \big(1+|\xi|^2\big)^{-2s}\,|\hat f(\xi)|\d\xi\right)^\frac12.
\end{align*}
The second factor is finite if and only if $4s>d$ and the first is the norm (or rather, one of many equivalent versions of the norm, see \cite[Section 6.10]{dobrowolski2010angewandte}) in the fractional Sobolev space of order $t = k+1+2s > k + 1 + d/2$. 
\end{proof}

\subsection{Approximation by finite neural networks in function space norms}

Barron spaces are popular in the context of machine learning because they provide model classes of functions which can be approximated well by neural networks with a single hidden layer and `small' coefficients, i.e.\ we do not have to rely on miraculous cancellations. This is indicative of the fact that gradient-based optimizers can efficiently `learn' a solution, although no rigorous link has been established. We present a representative result.

While the techniques of proof are well-known in the area going back to \cite{barron1993}, we have been unable to locate a reference for the precise version and sketch the proof for the reader's convenience.

\begin{theorem}
Let $\Omega\subseteq\R^d$ be a bounded open set and $m\in\N$ and $q\in[2,\infty)$. Let $\alpha = k+\gamma$ for $\gamma\in(0,1]$ and assume that
\begin{enumerate}
    \item $m\leq k$ or
    \item $m = k+1$ and $(1-\gamma)q < 1$
\end{enumerate}
Then for every $f\in \B_\alpha(\Omega)$ and every $n\in\N$, $\eps>0$, there exists a finite $\sigma_\alpha$-network
\[
f_n(x) = \frac1n\sum_{i=1}^n a_i\,\sigma_\alpha(w_i^Tx+b_i)
\]
such that
\begin{enumerate}
    \item $\frac1n\sum_{i=1}^n|a_i|\,\big(|w_i|+|b_i|\big)^\alpha \leq \|f\|_{\B_\alpha}$.

    \item There exists a constant $C>0$ depending only on $m,q$ and $\Omega$ such that
    \[
    \|f-f_n\|_{W^{m,q}(\Omega)} \leq C\,\|f\|_{\B_\alpha}\,n^{-1/2}.
    \]
\end{enumerate}
\end{theorem}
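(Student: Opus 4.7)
The plan is to follow the Monte Carlo argument of Maurey and Barron, reducing the Sobolev approximation bound to single-neuron derivative estimates that are essentially already worked out in the proof of Theorem~\ref{theorem fractional power embedding}. \textbf{Setup and normalization.} Fix $\eps>0$ and choose a probability measure $\pi$ on $\R\times\R^d\times\R$ with $f=f_{\pi,\alpha}$ on $\Omega$ and $M:=\E_\pi[|a|(|w|+|b|)^\alpha]\leq\|f\|_{\B_\alpha(\Omega)}+\eps$. Before sampling I reparametrize so that the quantity controlled by the Barron norm is \emph{constant} almost surely: weight $\pi$ by the density $\phi/M$ with $\phi(a,w,b)=|a|(|w|+|b|)^\alpha$, and then rescale the first coordinate to $a'=M\,\sign(a)/(|w|+|b|)^\alpha$. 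The resulting measure $\tilde\pi$ still represents $f$ on $\Omega$ and every triple in its support satisfies $|a'|(|w|+|b|)^\alpha\equiv M$.

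\textbf{Monte Carlo estimate.} Draw $(a_i,w_i,b_i)_{i=1}^n$ i.i.d.\ from $\tilde\pi$ and set $f_n(x)=\frac1n\sum_{i=1}^n a_i\sigma_\alpha(w_i^Tx+b_i)$. Property (1) then holds automatically since every summand has coefficient bound exactly $M\leq\|f\|_{\B_\alpha}+\eps$. For property (2), I apply Fubini together with the Marcinkiewicz--Zygmund inequality to the mean-zero i.i.d.\ random variables $\partial^\beta[a_i\sigma_\alpha(w_i^Tx+b_i)]-\partial^\beta f(x)$ at each fixed $x\in\Omega$, which for $|\beta|\leq m$ and $q\geq 2$ yields
\[
\E\,\|f_n-f\|_{W^{m,q}(\Omega)}^q \leq \frac{C_q}{n^{q/2}}\,\E_{\tilde\pi}\,\|a\sigma_\alpha(w^T\cdot+b)\|_{W^{m,q}(\Omega)}^q.
\]
The remaining ingredient is the single-neuron bound $\|a\sigma_\alpha(w^T\cdot+b)\|_{W^{m,q}(\Omega)}\leq C_{R,m,q,\alpha}|a|(|w|+|b|)^\alpha$. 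Under $\Omega\subset B_R$ the chain rule gives $\partial^\beta[a\sigma_\alpha(w^Tx+b)]=C_\beta\,a\,w^\beta\,\sigma_{\alpha-|\beta|}(w^Tx+b)$; for $|\beta|\leq k$ this is bounded pointwise by $C_R|a|(|w|+|b|)^\alpha$, while for $|\beta|=k+1$ (case (ii)) the hypothesis $(1-\gamma)q<1$ makes $\int_\Omega\sigma_{\gamma-1}(w^Tx+b)^q\,dx$ finite, and the one-dimensional change of variables along $w/|w|$ already performed in the proof of Theorem~\ref{theorem fractional power embedding} absorbs the remaining $|w|$ and $|b|$ factors into $(|w|+|b|)^{\alpha q}$. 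Substituting the normalization yields $\E\|f_n-f\|_{W^{m,q}(\Omega)}^q\leq C\,M^q\,n^{-q/2}$, hence by Jensen $\E\|f_n-f\|_{W^{m,q}(\Omega)}\leq C\,M\,n^{-1/2}$, so some realization attains this bound, and $\eps\to 0$ replaces $M$ by $\|f\|_{\B_\alpha}$.

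\textbf{Main obstacle.} The decisive step is the normalization: sampling directly from $\pi$ would make the right-hand side of the Monte Carlo estimate involve $\E_\pi[|a|^q(|w|+|b|)^{\alpha q}]$, which is not controlled by the Barron norm. Weighting $\pi$ by $\phi/M$ and rescaling $a$ forces the variance of each summand to scale with $M^2$ rather than an uncontrolled $q$-th moment, which is what converts the $L^1$-type control built into $\|\cdot\|_{\B_\alpha}$ into the desired $n^{-1/2}$ rate in a higher integrability Sobolev norm. The case $|\beta|=k+1$ additionally relies on the integrability threshold $(1-\gamma)q<1$ inherited from Theorem~\ref{theorem fractional power embedding}; past this threshold the single-neuron $W^{k+1,q}$ norm diverges and the argument breaks.
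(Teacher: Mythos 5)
Your argument is correct and follows essentially the same Maurey/Barron Monte Carlo strategy as the paper: normalize the representing measure so that the Barron density $|a|(|w|+|b|)^\alpha$ is constant (the paper instead normalizes to $a=\pm1$, $|w|+|b|\leq 1$, which plays the same role), sample i.i.d.\ neurons, bound the $q$-th (resp.\ second) moment of the error by a second-moment inequality (you use Marcinkiewicz--Zygmund pointwise plus Fubini; the paper uses symmetrization with Rademacher signs and the type-2 property of $W^{m,q}$, which are equivalent here), and finish with the single-neuron $W^{m,q}$ estimate inherited from Theorem~\ref{theorem fractional power embedding}. The one cosmetic slip is that the right-hand side of your Monte Carlo bound should read $\E_{\tilde\pi}\|a\sigma_\alpha(w^T\cdot+b)-f\|_{W^{m,q}}^q$, but since $\|f\|_{W^{m,q}}\leq\E_{\tilde\pi}\|a\sigma_\alpha(w^T\cdot+b)\|_{W^{m,q}}$ by Jensen, this only changes the constant.
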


\begin{proof}
    We note that the unit ball $\B_\alpha$ can be characterized as the closed convex hull of 
    \[
    \mathbb B := \{x\mapsto a\,\sigma_\alpha(w^Tx+b) : a=\pm 1, |w|+|b|\leq 1\}
    \]
    where the closure can be taken in any space into which $\B_\alpha$ embeds by Theorems \ref{theorem fractional power embedding} and \ref{theorem integer power embedding}. The reason for the reduction is the homogeneity of the activation function $\sigma_\alpha$, which allows us to consider only the unit ball of $\R^{d+1}$. Details of the argument can be found in \cite[Section 2]{representationformulas} for the case $k=1$, but apply more generally. In particular, there exists $M\in\N$ such that

    Recall that $L^p(\Omega)$ is a type II Banach space for $p\in[2,\infty)$, i.e.\ for every set of functions $f_1,\dots, f_n\in L^p(\Omega)$, there exists $C>0$ such that
    \[
    \E_\xi \left[\left\|\sum_{i=1}^n \xi_if_i\right\|_{L^p(\Omega)}\right] \leq C \sum_{i=1}^n \|f_i\|_{L^p(\Omega)}^2
    \]
    where the random variables $\xi_i$ are iid and take values $\pm 1$ with equal probability $1/2$. The same is true for $W^{k,p}$. We write $Z$ below for any type II Banach space which $\B_\alpha$ embeds into. The result now follows from Theorems \ref{theorem fractional power embedding} and \ref{theorem integer power embedding}.
    
    Let $f = \E_{(a,w,b)\sim\pi} [a\,\sigma(w^T\cdot + b)]$ for a probability distribution on the unit ball of $\R\times \R^d\times \R$ and assume that $(a_i, w_i, b_i)_{i=1}^m$ are iid samples from $\pi$. Then we `symmetrize', i.e.\ we note that we can introduce $m$ additional iid samples $(a_i', w_i', b_i')$ from $\pi$ and get
    \[
    f(x) = \E_{(a_i', w_i',b_i')}\left[\frac1m\sum_{i=1}^m a_i'\,\sigma(w_i'^Tx+b_i')\right].
    \]
    This yields
    \begin{align*}
    \E_{(a_i,w_i,b_i)_{i=1}^m\sim \pi^m} &\Bigg[\bigg\|f - \frac1m\sum_{i=1}^ma_i\,\sigma(w_i\cdot + b_i)\bigg\|_{Z}^2 \Bigg]\\
        &\leq \E_{(a_i,w_i,b_i)_{i=1}^m\sim \pi^m} \Bigg[\bigg\|\E_{(a_i', w_i',b_i')}\left[\frac1m\sum_{i=1}^m a_i'\,\sigma(w_i'^Tx+b_i')\right] - \frac1m\sum_{i=1}^ma_i\,\sigma(w_i\cdot + b_i)\bigg\|_{Z}^2 \Bigg]\\
        &= \E_{(a_i,w_i,b_i)_{i=1}^m\sim \pi^m} \Bigg[\bigg\|\E_{(a_i', w_i',b_i')}\frac1m\sum_{i=1}^m\left[a_i'\,\sigma(w_i'^Tx+b_i') - a_i\,\sigma(w_i\cdot + b_i)\right]\bigg\|_{Z}^2 \Bigg]\\
        &\leq \frac1{m^2}\E_{(a_i,w_i, b_i), (a_i', w_i', b_i')\sim\pi^{2m}} \left[  \bigg\|\sum_{i=1}^m \{a_i'\,\sigma(w_i'^T\cdot +b_i) - a_i\,\sigma(w_i\cdot +b_i)\}\bigg\|^2_X\right]\\
        &= \frac1{m^2}\E_{(a_i,w_i, b_i), (a_i', w_i', b_i')\sim\pi^{2m}, \xi } \left[  \bigg\|\sum_{i=1}^m \xi_i\{a_i'\,\sigma(w_i'^T\cdot +b_i) - a_i\,\sigma(w_i\cdot +b_i)\}\bigg\|^2_X\right]
    \end{align*}
    where we introduced an additional independent random quantity $\xi_i$ as above. This was permissible since it does not change the distribution of $a_i'\,\sigma(w_i'^T\cdot +b_i) - a_i\,\sigma(w_i\cdot +b_i)$ since both copies of the parameters $(a,w,b)$ are indistinguishable. By the type II property of $Z$, we find that
\begin{align*}
\E_{(a_i,w_i, b_i), (a_i', w_i', b_i')\sim\pi^{2m}, \xi }& \left[  \bigg\|\sum_{i=1}^m \xi_i\{a_i'\,\sigma(w_i'^T\cdot +b_i) - a_i\,\sigma(w_i\cdot +b_i)\}\bigg\|^2_Z\right]\\
    &\leq C_X \,\E_{(a_i,w_i, b_i), (a_i', w_i', b_i')\sim\pi^{2m}} \left[\sum_{i=1}^m \| a_i'\,\sigma(w_i'^T\cdot +b_i) - a_i\,\sigma(w_i\cdot +b_i)\|^2_Z\right]\\
    &= 4C_X\,m\,\E\big[\|a\,\sigma(w^T\cdot+b)\|_X^2\big]\\
    &\leq C\,\|f\|_{\B_\alpha}^2m
\end{align*}
for a constant $C$ which combines the type II constant of $Z$ and the embedding constant of $\B_k$ into $Z$. Thus, overall:
\[
\E_{(a_i,w_i,b_i)_{i=1}^m\sim \pi^m} \Bigg[\bigg\|f - \frac1m\sum_{i=1}^ma_i\,\sigma(w_i\cdot + b_i)\bigg\|_{Z}^2 \Bigg]\leq \frac{C\,\|f\|_{\B_k}^2}m.
\]
In particular, there exist weights $(a_i, w_i, b_i)_{i=1}^m$ for which the Monte-Carlo rate is attained.
\end{proof}

With different methods, approximation rates can also be established in $C^0$, which is not a type-II Banach space -- see e.g.\ \cite[Theorem 12]{ma2020towards} for the case $k=1$. We conjecture that a similar analysis may apply more to approximation in $C^{k-1}$ for $k\in\mathbb N$. Notably, we generally cannot expect approximation in the (non-separable) H\"older spaces $C^{0,\alpha}$. 

If $q\leq 2$, $W^{k,q}$ is a Banach space of type $q$, not type 2. The same approximation rates can of course be established in the weaker topology by the embedding from $W^{k,2}$ if $m\leq k$ or $2(1-\gamma)<1$, but to the best of our knowledge it is not clear whether the rate of $n^{-1/2}$ can be attained if $m = k+1$ and $q$ is such that $(1-\gamma)q<1$ but $(1-\gamma)2\geq 1$.

Similar results with sharper asymptotic rates in the `spectral Barron class' can be found in \cite[Theorem 1]{siegel2022high}. The spectral Barron class is defined using the Fourier transform, similar to the argument in Theorem \ref{theorem fractional sobolev embedding}.

\subsection{One dimensional Barron spaces and dimension reduction}
We consider Barron spaces in one dimension in greater detail, as we will need them below to construct specific Barron functions also in higher dimension.

\begin{lemma}\label{lemma one dimension derivatives}
    Assume that $\alpha = k\in\N$. Then $f\in \B_\alpha(-1,1)$ if and only if the $k+1$-th derivative of $f$ satisfies
    \[
    \int_\R \big|f^{(k+1)}(x)\big|\,\big(1+|x|^k\big)\dx < +\infty.
    \]
\end{lemma}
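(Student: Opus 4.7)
The plan rests on the distributional identity
$$\frac{d^{k+1}}{dx^{k+1}}\sigma_k(\pm x + b) = (\pm 1)^{k+1}\,k!\,\delta_{\mp b},$$
which sets up a correspondence between the representing measure in a $\B_k$-expansion and the $(k+1)$-th distributional derivative of $f$. In one dimension $w\in\R$ is a scalar, so the positive homogeneity $\sigma_k(wx+b) = |w|^k\sigma_k(\sign(w)\,x + b/|w|)$ lets me normalize $|w|\in\{0,1\}$ in any representing measure. After normalization, every $f\in\B_k(-1,1)$ can be written as
$$f(x) = c + \int_\R \sigma_k(x+b)\,\d\mu_+(b) + \int_\R \sigma_k(-x+b)\,\d\mu_-(b),$$
with $c\in\R$ and finite signed Radon measures $\mu_\pm$ on $\R$, satisfying
$$\|f\|_{\B_k(-1,1)} \asymp |c| + \int_\R(1+|b|)^k\,\d\bigl(|\mu_+|+|\mu_-|\bigr)(b).$$

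For the direction ($\Leftarrow$), suppose $\int_\R(1+|x|^k)|f^{(k+1)}(x)|\dx < \infty$. Taylor's theorem with integral remainder based at $-\infty$ (its convergence follows from $|x-t|^k\lesssim 1+|t|^k$ for $x\in(-1,1)$ together with the hypothesis) gives
$$f(x) = p(x) + \frac{1}{k!}\int_{-\infty}^x(x-t)^k f^{(k+1)}(t)\dt = p(x) + \frac{1}{k!}\int_\R\sigma_k(x-t)f^{(k+1)}(t)\dt,$$
for a polynomial $p$ of degree at most $k$. The substitution $b=-t$ exhibits the remainder as $\int_\R\sigma_k(x+b)\,\d\mu_+(b)$ with density $\mu_+ = \frac{1}{k!}f^{(k+1)}(-\cdot)\,\L^1$; its Barron contribution $\frac{1}{k!}\int(1+|t|)^k|f^{(k+1)}(t)|\dt$ is finite by hypothesis. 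Every polynomial of degree $\leq k$ belongs to $\B_k(-1,1)$ with bounded norm (on $(-1,1)$, a single neuron $\sigma_k(x+b)=(x+b)^k$ with $b>1$ is polynomial, and finite differences in $b$ isolate each monomial), so $f\in\B_k(-1,1)$.

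For the direction ($\Rightarrow$), start with an almost-minimizing representation in normalized form as above. Differentiating $k+1$ times distributionally and applying the key identity gives
$$f^{(k+1)} = k!\,\bigl(N_\sharp\mu_+ + (-1)^{k+1}\mu_-\bigr), \qquad N(b)=-b,$$
a finite signed Radon measure on $\R$. A change of variables then yields
$$\int_\R(1+|x|^k)\,\d|f^{(k+1)}|(x) \leq k!\int_\R(1+|b|^k)\,\d\bigl(|\mu_+|+|\mu_-|\bigr)(b) \lesssim \|f\|_{\B_k(-1,1)}.$$

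The main obstacle is that the notation $\int|f^{(k+1)}(x)|\dx$ in the statement must be interpreted as the weighted total variation of the signed Radon measure $f^{(k+1)}$: a generic $\B_k$-function has an atomic $(k+1)$-th derivative whenever the representing measures $\mu_\pm$ carry atoms, so $f^{(k+1)}$ need not be an $L^1$-function in the classical sense. A secondary, routine issue is that $d^{k+1}/dx^{k+1}$ has the polynomials of degree $\leq k$ as its kernel: these contribute nothing to the derivative integral but lie in $\B_k(-1,1)$ regardless, so the characterization captures only the top-order part of the Barron norm, with the polynomial piece absorbed for free.
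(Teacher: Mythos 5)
Your proof is correct and follows essentially the same route as the paper: Taylor's theorem with integral remainder (the Riemann--Liouville identity) for the sufficiency direction, and the fact that $\sigma_k^{(k+1)}$ is a scaled Dirac mass together with a push-forward argument for necessity; basing Taylor at $-\infty$ rather than at $0$ and normalizing $|w|\in\{0,1\}$ are cosmetic variants of the same idea. Your closing remark that $f^{(k+1)}$ must be read as a signed Radon measure and the integral as a weighted total variation is a genuine subtlety that the paper also flags (``the same is true by approximation if $f^{(k+1)}\cdot\mathcal{L}^1$ is replaced by a more general Radon measure''), and you are right that the Lemma as stated is cleanest under that interpretation.
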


The statement is a slightly different statement of \cite[Proposition 2.3]{heeringa}, where the authors provide a much more general analysis also for non-integer degree of smoothness using Caputo-derivatives, but only on bounded intervals. In the case $k=1$, a proof is given for instance in \cite[Example 4.1]{representationformulas} -- see also \cite{li2020complexity}. For the reader's convenience, we give a full proof of this version in the appendix.

We can use Lemma \ref{lemma one dimension derivatives} to prove a result which will be important later.

\begin{corollary}\label{corollary non-barron example}
    Let $f(x) = x^k \log x$. Then $f\notin \mathcal B_k([0,1])$, i.e.\ there exists no $\mathrm{ReLU}^k$-Barron function $F$ on $\R$ such that $F(x) = f(x)$ for all $x\in[0,1]$.
\end{corollary}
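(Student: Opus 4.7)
The strategy is to compute the $(k+1)$-st derivative of $f$ explicitly on $(0,1)$ and observe that it fails the integrability criterion of Lemma~\ref{lemma one dimension derivatives}.

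Step 1 (the derivative computation). By induction on $k$, or directly from the Leibniz rule applied to the product $x^{k}\cdot \log x$, one shows
\[
f^{(k+1)}(x) = \frac{k!}{x}\qquad \text{for all }x>0.
\]
The base case $k=0$ is $(\log x)' = 1/x$, and for the inductive step one notes that $f'(x) = k\,x^{k-1}\log x + x^{k-1}$; the polynomial summand $x^{k-1}$ vanishes after $k$ further differentiations, so $f^{(k+1)}(x) = k\,g^{(k)}(x)$ with $g(x) = x^{k-1}\log x$, reducing to the inductive hypothesis for $k-1$.

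Step 2 (reduction to Lemma~\ref{lemma one dimension derivatives}). Assume for contradiction that $f\in\mathcal{B}_{k}([0,1])$. Then there exists a probability measure $\pi$ on $\R\times\R\times\R$ with finite $\mathbb{E}_\pi[|a|(|w|+|b|)^{k}]$ such that $F:=f_{\pi,k}$ is defined on all of $\R$ and agrees with $f$ on $[0,1]$. Using the very same representing measure $\pi$, $F$ lies in $\mathcal{B}_{k}(-1,1)$. Lemma~\ref{lemma one dimension derivatives} then forces
\[
\int_\R \bigl|F^{(k+1)}(x)\bigr|\,(1+|x|^{k})\dx < +\infty.
\]
On the open subinterval $(0,1)$, however, $F$ coincides with the $C^\infty$ function $f$, so any reasonable interpretation of the $(k+1)$-st derivative of $F$ on $(0,1)$ equals the classical derivative $k!/x$. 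Hence
\[
\int_\R \bigl|F^{(k+1)}(x)\bigr|\,(1+|x|^{k})\dx \;\geq\; \int_0^{1}\frac{k!}{x}\dx \;=\; +\infty,
\]
a contradiction.

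The main subtlety is identifying $F^{(k+1)}$ on $(0,1)$ with the classical pointwise derivative $k!/x$: globally $F$ is only $C^{k-1,1}$ (Theorem~\ref{theorem integer power embedding}), so $F^{(k+1)}$ must be read in the distributional sense, as a locally finite signed Radon measure. On the open set where $F$ and $f$ coincide and $f$ is smooth, this distribution is the absolutely continuous measure $(k!/x)\dx$, so the lower bound on $\int|F^{(k+1)}|(1+|x|^k)\dx$ is legitimate. Everything else is bookkeeping.
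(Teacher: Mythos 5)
Your proof is correct and takes essentially the same route as the paper: compute $f^{(k+1)}$, identify it as a non-zero multiple of $1/x$, and conclude that non-integrability near $0$ contradicts the criterion of Lemma~\ref{lemma one dimension derivatives}. The only cosmetic difference is that your induction pins down the constant as $k!$ directly, whereas the paper expands by the Leibniz rule and then argues indirectly that the resulting binomial sum is non-zero.
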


\begin{proof}
    We compute
    \begin{align*}
    \frac{d^{k+1}}{dx^{k+1}}f(x) &= \sum_{l=0}^{k+1}\binom{k+1}l \,\frac{d^l}{dx^l}x^k\,\frac{d^{k+1-l}}{dx^{k+1-l}}\log x\\
        &= \sum_{l=0}^{k} \binom{k+1}l \,\frac{k!}{(k-l)!}x^{k-l}\,(-1)^{k-l}(k-l)!\,x^{-(k+1-l)}\\
        &= \left(\sum_{l=0}^k (-1)^{k-l} \,\binom{k+l}l \,k!\right)\frac1x
    \end{align*}
    since in the term for $l=k+1$ where no derivatives fall on the logarithm, all $k+1$ derivatives fall on the factor $x^k$, making it zero. The constant factor is non-zero since $f$ is not a polynomial of degree $\leq k$ on $[0,1]$, i.e.\ its $k+1$-th derivative cannot vanish. In particular, like the function $1/x$, the $k+1$-th derivative of $f$ is not integrable over $(0,1)$.
\end{proof}

One-dimensional Barron functions play an important role due to the following dimension reduction techniques.

\begin{lemma} \label{lemma dimension reduction 1}
    Let $C\subset\R^d$ be a set which contains a line segment $L= \{x_0+tv : t\in (0,\eps)\}$. If $u\in \B_\alpha(C)$, then the `sliced' function $u_L:(0,\eps)\to \R$ given by $u_L(t) = u(x+tv)$ satisfies $u_L\in \B_\alpha(0,\eps)$.
\end{lemma}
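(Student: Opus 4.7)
The plan is to exhibit an explicit representing measure for the sliced function $u_L$ by push-forward from any representing measure of $u$, and then show that the natural coefficient bound transfers (up to a multiplicative constant depending on $x_0$ and $v$) under this push-forward.

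First, I would fix $\eta > 0$ and pick a probability measure $\pi$ on $\mathbb{R} \times \mathbb{R}^d \times \mathbb{R}$ with
\[
u(x) = \mathbb{E}_{(a,w,b)\sim\pi}\big[a\,\sigma_\alpha(w^Tx+b)\big] \quad\text{for all } x \in C, \qquad \mathbb{E}_\pi\big[|a|(|w|+|b|)^\alpha\big] \leq \|u\|_{\mathcal{B}_\alpha(C)} + \eta.
\]
Since $L \subseteq C$, for every $t \in (0,\eps)$ we may plug in $x = x_0 + tv$ and rewrite the argument of $\sigma_\alpha$ as $(w^Tv)\,t + (w^Tx_0+b)$. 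This suggests defining the measurable map
\[
\Phi:\mathbb{R}\times\mathbb{R}^d\times\mathbb{R} \to \mathbb{R}\times\mathbb{R}\times\mathbb{R}, \qquad \Phi(a,w,b) = \big(a,\: w^Tv,\: w^Tx_0+b\big),
\]
and taking $\tilde{\pi} = \Phi_\sharp \pi$. By the change-of-variables formula, for every $t\in(0,\eps)$,
\[
\mathbb{E}_{(\tilde a,\tilde w,\tilde b)\sim \tilde\pi}\big[\tilde a\,\sigma_\alpha(\tilde w\, t + \tilde b)\big] = \mathbb{E}_{(a,w,b)\sim\pi}\big[a\,\sigma_\alpha((w^Tv)t + w^Tx_0 + b)\big] = u(x_0+tv) = u_L(t),
\]
provided the integrand is absolutely integrable, which will follow from the norm estimate below.

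Next, I would bound the $\mathcal{B}_\alpha(0,\eps)$-coefficient functional evaluated at $\tilde{\pi}$. Using $|w^Tv| \leq |v||w|$ and $|w^Tx_0+b| \leq |x_0||w|+|b|$,
\[
|\tilde w| + |\tilde b| \;\leq\; (|v|+|x_0|)\,|w| + |b| \;\leq\; C_{x_0,v}\,\big(|w|+|b|\big), \qquad C_{x_0,v} := \max\{1,\,|v|+|x_0|\}.
\]
Therefore
\[
\mathbb{E}_{\tilde\pi}\big[|\tilde a|(|\tilde w|+|\tilde b|)^\alpha\big] \leq C_{x_0,v}^\alpha\,\mathbb{E}_{\pi}\big[|a|(|w|+|b|)^\alpha\big] \leq C_{x_0,v}^\alpha \big(\|u\|_{\mathcal{B}_\alpha(C)} + \eta\big).
\]
This finiteness justifies the interchange above by dominated convergence, and taking $\eta \downarrow 0$ yields
\[
\|u_L\|_{\mathcal{B}_\alpha(0,\eps)} \leq C_{x_0,v}^\alpha\,\|u\|_{\mathcal{B}_\alpha(C)} < \infty,
\]
completing the proof.

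I do not anticipate a serious obstacle here: the argument is essentially a functorial statement that affine composition pulls back Barron representations. The only point requiring minor care is the bookkeeping on the weight/bias bound so that the constant $C_{x_0,v}$ comes out explicitly, and the verification that the push-forward measure is genuinely a probability measure on the one-dimensional parameter space (which is immediate since $\Phi$ is Borel measurable and $\pi$ is a probability measure).
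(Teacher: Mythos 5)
Your proof is correct and takes essentially the same approach as the paper's: plug $x = x_0 + tv$ into a representing measure for $u$, observe the argument becomes affine in $t$, and push the measure forward along $(a,w,b)\mapsto(a,w^Tv,w^Tx_0+b)$ to get a representing measure for $u_L$ with a controlled moment. You make the push-forward map and the $\eta$-approximation of the infimal norm explicit where the paper leaves them implicit, and your constant $\max\{1,|v|+|x_0|\}^\alpha$ is a slightly tighter version of the paper's $2^\alpha\max\{1,|x_0|,|v|\}^\alpha$, but these are cosmetic differences.
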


\begin{proof}
    Since $u\in\B_\alpha$, we find that
    \[
    u(x) = \E_{(a,w,b)\sim\pi}\big[a\,\sigma_\alpha(w^Tx+b)\big]
    \]
    for all $x\in C$ and in particular
    \[
    u_L(t) = u(x_0+tv) = \E_{(a,w,b)\sim\pi}\big[a\,\sigma_\alpha\big(t\,w^Tv + w^Tx_0+b\big)\big].
    \]
    Since the moment-bound
    \[
    \E_{(a,w,b)\sim\pi}\big[|a|\big\{|w^Tv| + |w^Tx_0+b|\big\}^\alpha\big] \leq 2^\alpha \max\{1, |x_0|, |v|\}^\alpha \,\E\big[ |a| \big\{|v| + |b|\big\}^\alpha\big] 
    \]
    holds, we find that
    \[
    \|u_L\|_{\B(0,\eps)} \leq  2^\alpha \max\{1, |x_0|, |v|\}^\alpha\,\|u\|_{\B(C)}. \qedhere
    \]
\end{proof}

A converse of the preceding statement is true for {\em homogeneous} Barron functions on a half-plane.

\begin{lemma} \label{lemma dimension reduction 2}
    Let $\R^2_+=\{(x,y)\in \R^2 : y>0\}$ and $u :\R^2_+\to\R$ such that $u(\lambda x) = \lambda^\alpha u(x)$ for all $\lambda>0$ and $x\in \R^2_+$. Then $u\in \B_\alpha(\R^2_+)$ if and only if $u(\cdot, 1)\in \B(\R)$.
\end{lemma}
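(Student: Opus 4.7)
The statement should be interpreted with $\B(\R)$ meaning $\B_\alpha(\R)$. The forward direction is a direct application of the slicing lemma, while the reverse direction is where the homogeneity hypothesis enters essentially, through the identity $y^\alpha\,\sigma_\alpha(wx/y + b) = \sigma_\alpha(wx + by)$ valid for $y > 0$ by positive homogeneity of $\sigma_\alpha$.

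For the forward direction, the plan is to apply (the argument of) Lemma \ref{lemma dimension reduction 1} with $x_0 = (0, 1)$ and $v = (1, 0)$, so that $u(\cdot, 1)$ is exactly the slice $u_L$. Although that lemma is formally stated over a bounded interval, its proof produces a Barron representation
\[
u_L(t) = \E_{(a,w,b)\sim\pi}\bigl[a\,\sigma_\alpha(t\,w^Tv + w^Tx_0 + b)\bigr]
\]
valid for all $t \in \R$, with a moment bound that is uniform in $t$. With the stated choice of $x_0, v$ this gives $u(\cdot, 1) \in \B_\alpha(\R)$ and $\|u(\cdot, 1)\|_{\B_\alpha(\R)} \leq 2^\alpha\,\|u\|_{\B_\alpha(\R^2_+)}$.

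For the reverse direction, I start from a representation $u(t, 1) = \E_{(a,w,b)\sim\pi}\bigl[a\,\sigma_\alpha(wt + b)\bigr]$ and use $\alpha$-homogeneity of $u$ together with the homogeneity identity above to write, for $(x, y) \in \R^2_+$,
\[
u(x, y) = y^\alpha\,u(x/y, 1) = \E_{(a,w,b)\sim\pi}\bigl[a\,\sigma_\alpha(wx + by)\bigr].
\]
This is itself a Barron representation on $\R^2_+$, with two-dimensional weight $(w, b) \in \R^2$ and bias exactly zero; formally, push $\pi$ forward under $(a, w, b) \mapsto (a, (w, b), 0) \in \R \times \R^2 \times \R$. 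The resulting coefficient bound is
\[
\E_\pi\bigl[|a|\,(w^2 + b^2)^{\alpha/2}\bigr] \leq \E_\pi\bigl[|a|\,(|w| + |b|)^\alpha\bigr],
\]
and taking the infimum over representing measures $\pi$ gives $\|u\|_{\B_\alpha(\R^2_+)} \leq \|u(\cdot, 1)\|_{\B_\alpha(\R)}$.

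The only nontrivial point, and the heart of the argument, is that the homogeneity identity causes the bias in the new two-dimensional representation to vanish, which is precisely what makes the $(|w| + |b|)^\alpha$ weight translate cleanly into the Euclidean norm $|(w,b)|^\alpha$. Everything else is bookkeeping: measurability of the pushforward is automatic, and continuity of $u$ on $\R^2_+$ in the reverse direction follows from continuity of $u(\cdot,1) \in \B_\alpha(\R)$ together with continuity of $(x,y)\mapsto (x/y, 1)$ on the open half-plane.
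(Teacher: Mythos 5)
Your proposal is correct and follows essentially the same route as the paper: the forward direction is exactly the slicing argument of Lemma \ref{lemma dimension reduction 1} applied with $x_0=(0,1)$, $v=(1,0)$, and the reverse direction uses $\alpha$-homogeneity to rewrite $y^\alpha\,\sigma_\alpha(wx/y+b)=\sigma_\alpha(wx+by)$, turning the one-dimensional bias into a weight in the $y$-direction with zero new bias. Your additional bookkeeping (the explicit norm bound $\E[|a|(w^2+b^2)^{\alpha/2}]\leq\E[|a|(|w|+|b|)^\alpha]$ and the remark that the slicing representation is valid for all $t\in\R$, not just a bounded interval) only makes explicit what the paper leaves implicit.
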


\begin{proof}
    In light of Lemma \ref{lemma dimension reduction 1}, we only have to establish the second direction.
    Assume that $u(\cdot, 1)\in \B_\alpha(\R)$, i.e.\
    \[
    u(x,1) = \E_{(w, b)\sim\pi}\big[a\,\sigma_\alpha(wx+b)\big].
    \]
    Then
    \[
    u(x,y) = y^\alpha u(x/y, 1) = y^\alpha \E_{(w, b)\sim\pi}\big[a\,\sigma_\alpha(wx/y+b)\big] = \E_{(w, b)\sim\pi}\big[a\,\sigma_\alpha(wx+by)\big]
    \]
    by positive homogeneity and the fact that $y>0$. In particular, $u\in \B_\alpha(\R^2_+)$.
\end{proof}

\section{\texorpdfstring{PDEs in ReLU$^\alpha$-Barron spaces on a half-space}{PDEs in generalized Barron-spaces on half-space}}
\label{section pde}

\subsection{Explicit representation formulas}\label{section explicit}

In the plane $\R^2$, harmonic functions (two real variables) and holomorphic functions (one complex variable) are intimately linked, which we will exploit below. Recall that the $\alpha$-th power of a complex number $z = x+iy = r\,e^{i\phi}$ is defined uniquely if and only if $\alpha$ is an integer. If $\alpha\notin\N$, then we can define
\[
z^\alpha= \big(re^{i\phi}\big)^\alpha = r^\alpha\,e^{i\alpha\phi}
\]
only non-uniquely since the phase $\phi$ is only defined as an element of $\R/2\pi\Z$ and since
\[
e^{i\alpha\phi} = e^{i\alpha (\phi+2\pi n)}\qquad\LRa\quad e^{2\pi i \:n \alpha} = 1 \qquad\LRa\quad n\alpha\in\mathbb Z.
\]
If $\alpha$ is an integer, the definition of the power does not depend on the choice of representative for the angle $\phi$ (i.e.\ on the choice of $n\in\Z$. If $\alpha$ is a rational number, there are a finite number of different $\alpha$-th powers of $z$. If $\alpha$ is an irrational number, there are a countably infinite number of $\alpha$-th powers of $z$. In the following, we choose the $\alpha$-th power version of $z$ which corresponds to the selection of representative $\phi\in[0,2\pi)$ for $z = re^{i\phi}$ everywhere (the `principal branch').

\begin{theorem}\label{theorem complex representation}
Assume that 
\begin{equation}\label{eq harmonic dirichlet alpha}
\begin{pde}
-\Delta u &= 0 &\text{in }\R^2_+\\
u &= \ReLU^\alpha(x) &\text{on }\partial\R^2_+.
\end{pde}
\end{equation}
Then $u = u_\alpha + h$ where
\begin{enumerate}
\item $u_\alpha$ is given by
\[
    u_\alpha(x,y) = 
    \begin{cases} \left(\frac1\pi\,\arctan\left(\frac xy\right) + \frac12\right)\Re \big((x+iy)^\alpha\big) - \frac{1}{\pi}\log\big(\sqrt{x^2+y^2}\big)\,\Im\big((x+iy)^\alpha\big)  & \text{if }\alpha \in\N\vspace{3mm}\\ 
    \Re{((x+iy)^{\alpha})} + \cot{(\pi\alpha)}\,\Im{((x+iy)^{\alpha})} &\text{if }\alpha \in (0,\infty)\setminus \N.
    \end{cases}
\]
\item $h$ is a harmonic function on $\R^2_+$ and $h\equiv 0$ on $\partial\R^2_+= \R\times\{0\}$.
\end{enumerate}
Here $\Re, \Im$ denote the real and imaginary parts of a complex number respectively.
\end{theorem}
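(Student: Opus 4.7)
The strategy is to verify directly that $u_\alpha$ solves the boundary value problem \eqref{eq harmonic dirichlet alpha}; the decomposition $u = u_\alpha + h$ then follows immediately by setting $h := u - u_\alpha$, since the difference of two solutions of the PDE is harmonic with vanishing boundary values on $\partial\R^2_+$. Only two things must be checked: that $u_\alpha$ is harmonic in $\R^2_+$, and that it attains the prescribed boundary data $\ReLU^\alpha$.

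For harmonicity, I argue by complex analysis. In the non-integer case, $z \mapsto z^\alpha$ is holomorphic on $\R^2_+$ (for any branch whose cut lies outside the open upper half-plane), so $\Re(z^\alpha)$ and $\Im(z^\alpha)$ are each harmonic, and $u_\alpha$, a real linear combination of them, is harmonic as well. In the integer case, I recast $u_\alpha$ as the real part of a single holomorphic function on $\R^2_+$: writing $\log z = \log|z| + i\phi$ with $\phi = \arg z \in (0, \pi)$ on the upper half-plane and using $\arctan(x/y) = \pi/2 - \phi$ for $y > 0$, a short rearrangement yields
\[
u_\alpha(z) \;=\; \Re\!\left( z^\alpha \Big(1 + \tfrac{i}{\pi}\log z\Big)\right),
\]
which is manifestly harmonic.

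The boundary condition is then obtained by passing to the limit $y \to 0^+$ for fixed $x \in \R$. In the non-integer case, for $x > 0$ one has $z^\alpha \to x^\alpha \in \R_{>0}$, so $\Im(z^\alpha) \to 0$ and $u_\alpha(x, 0) = x^\alpha$; for $x < 0$ one has $z^\alpha \to |x|^\alpha e^{i\pi\alpha}$, and the coefficient $\cot(\pi\alpha)$ is chosen precisely so that the resulting combination of $\cos(\pi\alpha)$ and $\sin(\pi\alpha)$ vanishes. In the integer case, the prefactor $\tfrac{1}{\pi}\arctan(x/y) + \tfrac{1}{2}$ tends to $1$ if $x > 0$ and to $0$ if $x < 0$, while $\Im(z^k) \to 0$ because $z^k$ is real on the real axis; hence the $\log|z|$-term drops out in the limit and $u_\alpha(x, 0)$ equals $x^k$ or $0$ according to the sign of $x$, that is, $\ReLU^k(x)$.

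The main obstacle is the algebraic identification of $u_\alpha$ as the real part of a single holomorphic function in the integer case, where one must track the branch of $\log z$ on $\R^2_+$ and the relations between $\arg z$, $\arctan(x/y)$, and $\Im(\log z)$ carefully. Once this rewrite is in hand, harmonicity is automatic and the boundary calculation reduces to the elementary limits just described. The non-integer case is shorter, and the value $\cot(\pi\alpha)$ is essentially forced by the cancellation requirement on the negative real axis.
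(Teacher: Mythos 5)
Your overall structure matches the paper's: verify that $u_\alpha$ is harmonic and attains the prescribed boundary data, then observe $h := u - u_\alpha$ is harmonic with zero trace. The two proofs diverge only in how harmonicity is established in the integer case, and there your argument is genuinely slicker. The paper expands all first and second partial derivatives of $u_\alpha$ and watches the pieces cancel via the Cauchy--Riemann equations — roughly a page of bookkeeping. You instead rewrite $u_\alpha = \Re\!\left(z^\alpha\bigl(1 + \tfrac{i}{\pi}\log z\bigr)\right)$, which is the real part of a function holomorphic on $\R^2_+$ and hence automatically harmonic. The rewrite is correct: with $\phi = \arg z \in (0,\pi)$ one has $\arctan(x/y) = \pi/2 - \phi$ for $y>0$, and a short calculation gives
\[
\Re\!\left(z^\alpha\Bigl(1 + \tfrac{i}{\pi}\log z\Bigr)\right) = \Bigl(1 - \tfrac{\phi}{\pi}\Bigr)\Re(z^\alpha) - \tfrac{1}{\pi}\log|z|\,\Im(z^\alpha) = \Bigl(\tfrac12 + \tfrac1\pi\arctan(x/y)\Bigr)\Re(z^\alpha) - \tfrac{1}{\pi}\log|z|\,\Im(z^\alpha).
\]
This is a real improvement in clarity over the paper's derivation, and it also gives the boundary values for free since the $\arctan$ prefactor and the $\log$ term are handled simultaneously.

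One thing you should not gloss over: in the non-integer case you write that ``the coefficient $\cot(\pi\alpha)$ is chosen precisely so that the resulting combination of $\cos(\pi\alpha)$ and $\sin(\pi\alpha)$ vanishes,'' but if you actually carry out that computation, the combination $\cos(\pi\alpha) + \cot(\pi\alpha)\sin(\pi\alpha) = 2\cos(\pi\alpha)$ does \emph{not} vanish; cancellation requires the opposite sign, $\cos(\pi\alpha) - \cot(\pi\alpha)\sin(\pi\alpha) = 0$. The formula in the theorem statement should therefore read $\Re(z^\alpha) - \cot(\pi\alpha)\,\Im(z^\alpha)$, and indeed the paper's own proof uses the minus sign in both the harmonicity line and the boundary limit. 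By asserting rather than checking the cancellation, you both leave a gap in your own argument and miss a chance to catch the sign error. Write out the one-line computation for $x<0$; it is the only place the specific coefficient actually matters.
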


\begin{proof}
\textbf{Case 1}: We first consider the case $\alpha \in \mathbb{N}$.
Let us begin by verifying the harmonicity of $u_{\alpha}$ as specified above. To simplify the expressions below, we write 
\[
r(x,y) := \Re\big((x+iy)^\alpha), \qquad i (x,y) =\Im\big((x+iy)^\alpha\big).
\]
Recall the Cauchy-Riemann equations
\[
\partial_x r = \partial_y i, \qquad \partial_y r = -\partial_x i.
\]
For $\alpha\in\N$, these hold automatically since $z\mapsto z^\alpha$ is uniquely defined and holomorphic.
We compute 
\begin{align*}
\partial_{x}u_{\alpha}(x,y) 
    &= \frac{y}{\pi(x^{2}+y^{2})}r(x,y)+ \left(\frac{1}{2} + \frac{1}{\pi}\arctan{\left(\frac{x}{y}\right)}\right)\partial_{x}r(x,y)\\
    &\qquad - \frac{x}{\pi(x^{2}+y^{2})}i(x,y)- \frac{1}{\pi}\log{(\sqrt{x^{2}+y^{2}})}\partial_{x}i(x,y)\\
\partial_{y}u_{\alpha}(x,y)
    &= -\frac{x}{\pi(x^{2}+y^{2})}r(x,y)+ \left(\frac{1}{2} + \frac{1}{\pi}\arctan{\left(\frac{x}{y}\right)}\right)\partial_{y}r(x,y)\\
    &\qquad -\frac{y}{\pi(x^{2}+y^{2})}i(x,y)- \frac{1}{\pi}\log{\left(\sqrt{x^{2}+y^{2}}\right)}\partial_{y}i(x,y)\\
\partial_{xx}u_{\alpha}(x,y)
    &= -\frac{2xy}{\pi(x^{2}+y^{2})^{2}}r(x,y)+ \frac{2y}{\pi(x^{2}+y^{2})}\partial_{x}r(x,y)+ \left(\frac{1}{2}+ \frac{1}{\pi}\arctan{\left(\frac{x}{y}\right)}\right)\partial_{xx}r(x,y)\\
    &\qquad -\frac{y^{2}-x^{2}}{\pi(x^{2}+y^{2})^{2}}i(x,y)-\frac{2x}{\pi(x^{2}+y^{2})}\partial_{x}i(x,y)- \frac{1}{\pi}\log{(\sqrt{x^{2}+y^{2}})}\partial_{xx}i(x,y)\\
\partial_{yy}u_{\alpha}(x,y) 
    &= \frac{2xy}{\pi(x^{2}+y^{2})^{2}}r(x,y)-\frac{2x}{\pi(x^{2}+y^{2})}\partial_{y}r(x,y)+ \left(\frac{1}{2} + \frac{1}{\pi}\arctan{\left(\frac{x}{y}\right)}\right)\partial_{yy}r(x,y)\\
    &\qquad -\frac{x^{2}-y^{2}}{\pi(x^{2}+y^{2})^{2}}i(x,y)-\frac{2y}{\pi(x^{2}+y^{2})}\partial_{y}i(x,y)- \frac{1}{\pi}\log{(\sqrt{x^{2} +y^{2}})}\partial_{yy}i(x,y).
\end{align*}
Hence:
\begin{align*}
\Delta u_{\alpha}(x,y) 
    &= \left(\frac{1}{2} + \frac{1}{\pi}\arctan{\left(\frac{x}{y}\right)}\right)\,\Delta r(x,y)- \frac{1}{\pi}\log{\left(\sqrt{x^{2}+y^{2}}\right)}\,\Delta i(x,y)\\
    &\qquad + \frac{2y}{\pi(x^2+y^2)}\big(\partial_xr - \partial_yi\big) - \frac{2x}{\pi(x^2+y^2)}\big(\partial_yr + \partial_xi\big) \hspace{2cm} = 0
\end{align*}
It is easy to see  that $u_{\alpha}(x,y)$ satisfies the boundary condition since
\[
\lim_{y\to 0^{+}}u_{\alpha}(x,y) = \left(\frac{1}{2} + \frac{1}{\pi}\frac{\pi}{2}\text{sign}(x)\right)x^{\alpha} - \frac{1}{\pi}\log(x)\cdot 0 = \frac{x^{\alpha} + x^{\alpha}\text{sign}(x)}{2} = \text{ReLU}^{\alpha}(x)
\]
Thus, $u_\alpha$ is a solution of the PDE \eqref{eq harmonic dirichlet alpha}. Every other solution $\tilde u_\alpha$ satisfies $\Delta (\tilde u_\alpha - u_\alpha)= 0$ in $\R^2_+$ and $u-u_\alpha\equiv 0$ on $\partial \R^2_+$ as we claimed.

\textbf{Case 2}: We now consider the case that $\alpha \in \mathbb{R}\backslash\mathbb{N}$.
It can be easily shown that $u_{\alpha}(x,y)$ as defined in the theorem statement is harmonic:
\[
\Delta u_{\alpha}(x,y) = \Delta\,\Re{((x+iy)^{\alpha})} - \cot{(\pi\alpha)}\,\Delta\,\Im{((x+iy)^{\alpha})} = 0
\]
since the real and imaginary part of a holomorphic function are harmonic. To see that $z^\alpha$ is indeed a holomorphic function on $\R^2_+$ note that we can define $z^\alpha = \exp(\alpha\,\log(z))$ locally with any branch of the (complex) logarithm, see e.g.\ \cite[Theorem I.2.13]{busam2009complex}. Since the composition of holomorphic functions is holomorphic, the function $z\mapsto z^\alpha$ is holomorphic on any domain where it is uniquely defined, e.g.\ $\R^2_+\subseteq\C$.

Let us demonstrate that $u_\alpha$ also satisfies the desired boundary condition:
\[
(x+iy)^{\alpha} = r^{\alpha}e^{i\alpha\theta} = (x^{2}+y^{2})^{\alpha/2}(\cos{(\alpha\arctan(y/x))} + i\sin{(\alpha\arctan(y/x))})
\]
On the positive $x$-axis, we selected the representative of $z$ for which $\phi = 0$, which yields $\phi = \pi$ on the negative $x$-axis, if the argument $\phi$ is selected continuously in $\R^2_+$.
If $x>0$, we have
\[
    \lim_{y\to 0^{+}} u_{\alpha}(x,y) = (x^{2}+0)^{\alpha/2}(\cos{(\alpha \cdot 0)} -\cot{(\alpha\pi)}\sin{(\alpha\cdot 0)}) = x^{\alpha}
\]
and if $x<0$:
\[
    \lim_{y\to 0^{+}}u_{\alpha}(x,y) = (x^{2} + 0)^{\alpha/2}(\cos{(\alpha\pi)} - \cot{(\alpha\pi)}\sin{(\alpha\pi)}) = 0
\]
Thus, the harmonicity and boundary conditions are verified.
\end{proof}

The explicit expression is particularly meaningful in the case of fractional $\alpha$.

\begin{lemma}\label{lemma unique homogeneous solution}
    Let $\alpha \in(0,\infty)\setminus\N$. Then $u_\alpha$ is the unique harmonic function such that $u(\lambda x, \lambda y) = \lambda^\alpha\,u(x,y)$ and for all $\lambda>0$ and $u(1,0)=1$, $u(-1,0) = 0$.

    For $\alpha\in\mathbb N$, no homogeneous solution exists.
\end{lemma}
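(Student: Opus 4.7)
The natural approach is separation of variables in polar coordinates. Parametrize $\R^2_+$ by $(r,\theta)$ with $r>0$ and $\theta\in(0,\pi)$, so that $(x,y) = (r\cos\theta, r\sin\theta)$. Because $u$ is positively $\alpha$-homogeneous, setting $\lambda = r$ in the scaling relation gives
\[
u(r\cos\theta, r\sin\theta) = r^\alpha\, v(\theta), \qquad v(\theta) := u(\cos\theta, \sin\theta),
\]
so the problem of classifying homogeneous harmonic functions on $\R^2_+$ reduces to classifying $v:(0,\pi)\to\R$. Since $u$ is harmonic it is smooth in $\R^2_+$, hence $v$ is smooth on $(0,\pi)$, and writing the Laplacian in polar coordinates gives $\Delta u = r^{\alpha-2}\bigl(\alpha^2 v(\theta) + v''(\theta)\bigr)$. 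Harmonicity is therefore equivalent to the ODE $v'' + \alpha^2 v = 0$, whose general solution is $v(\theta) = A\cos(\alpha\theta) + B\sin(\alpha\theta)$.

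Next I would impose the two point conditions. Interpreting $u(1,0)$ and $u(-1,0)$ as the one-sided limits $\lim_{\theta\to 0^+} v(\theta)$ and $\lim_{\theta\to \pi^-} v(\theta)$ respectively, the conditions $u(1,0)=1$ and $u(-1,0)=0$ become the linear system
\[
A = 1, \qquad A\cos(\alpha\pi) + B\sin(\alpha\pi) = 0.
\]
For $\alpha\in(0,\infty)\setminus\N$, $\sin(\alpha\pi)\neq 0$, so the system has a unique solution $A=1$, $B=-\cot(\alpha\pi)$. Translating back to Cartesian coordinates via $r^\alpha\cos(\alpha\theta) = \Re\bigl((x+iy)^\alpha\bigr)$ and $r^\alpha\sin(\alpha\theta) = \Im\bigl((x+iy)^\alpha\bigr)$ (on the principal branch used in Theorem \ref{theorem complex representation}) recovers $u_\alpha$ as the unique such harmonic function.

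For $\alpha = k\in\N$ we instead have $\sin(\alpha\pi)=0$ and $\cos(\alpha\pi) = (-1)^k \neq 0$, so the second equation reduces to $(-1)^k = 0$, a contradiction. Hence no positively $\alpha$-homogeneous harmonic function on $\R^2_+$ can realize the prescribed values at $(1,0)$ and $(-1,0)$ simultaneously. (Heuristically, the integer case is precisely the resonance where $\sin(\alpha\theta)$ vanishes at both endpoints, so the one free parameter $B$ cannot correct the mismatch forced by $A=1$.)

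The main thing to handle carefully is the sense in which the boundary values are attained, since the points $(\pm 1,0)$ lie on $\partial\R^2_+$ rather than inside the domain; the cleanest reading is as non-tangential or angular limits as $\theta\to 0^+$ and $\theta\to \pi^-$, which is exactly what the homogeneous factorization $u = r^\alpha v(\theta)$ allows us to make precise without additional regularity assumptions. Everything else is a routine solution of a linear second-order boundary-value problem, with the Fredholm alternative for $v''+\alpha^2 v = 0$ on $(0,\pi)$ explaining the dichotomy between $\alpha\in\N$ and $\alpha\notin\N$.
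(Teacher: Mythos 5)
Your separation-of-variables argument is correct, but it takes a genuinely different route from the paper. The paper's proof relies on Liouville's theorem (Theorem~\ref{theorem liouville}): it first translates the two point conditions plus homogeneity into the boundary condition $u(x,0) = \ReLU^\alpha(x)$, notes that the difference $U-V$ of any two solutions is harmonic on $\R^2_+$, vanishes on $\partial\R^2_+$, and satisfies $\max_{B_r}|U-V| = C\,r^\alpha$, and then invokes Liouville to conclude $U-V$ is a harmonic polynomial of degree $\leq\lfloor\alpha\rfloor$; since $U-V$ is also $\alpha$-homogeneous with $\alpha\notin\N$, it must vanish. Non-existence for $\alpha\in\N$ is then obtained by an analogous comparison against the (non-homogeneous) $u_k$ of Theorem~\ref{theorem complex representation}. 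Your argument instead reduces everything to the ODE $v''+\alpha^2 v=0$ on $(0,\pi)$, turning the classification of $\alpha$-homogeneous harmonic functions into a $2\times 2$ linear system in $(A,B)$. This is more elementary (no Liouville), fully constructive (it re-derives the explicit formula for $u_\alpha$ rather than merely verifying it), and treats uniqueness for $\alpha\notin\N$ and non-existence for $\alpha\in\N$ in one stroke as a Fredholm dichotomy: the system is invertible iff $\sin(\alpha\pi)\neq 0$, and the right-hand side leaves the range precisely when $\alpha\in\N$. The trade-off is that the paper's Liouville-based argument is dimension-agnostic in spirit and avoids any explicit solution of an angular eigenvalue problem, which would become a Laplace--Beltrami problem on a hemisphere in $d\geq 3$. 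One small sanity check: your $B=-\cot(\alpha\pi)$, hence $v(\theta)=\cos(\alpha\theta)-\cot(\alpha\pi)\sin(\alpha\theta)$, matches the sign that the paper actually uses when verifying harmonicity and boundary values in the proof of Theorem~\ref{theorem complex representation}; the displayed formula in the statement of that theorem has a $+$ sign, which looks like a typo.
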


\begin{proof}
    Together, the conditions imply that 
    \[
    u(x,0) = |x|^\alpha \,u(x/|x|, 0) = |x|^\alpha \,\ReLU^0(x) =  \ReLU^\alpha(x).
    \]
    Assume that $U, V$ are two functions both satisfying the PDE $\Delta u \equiv 0$ and the boundary condition $u(\lambda x) = \ReLU^\alpha(x)$. Then in particular 
    \[
    \begin{pde}
        \Delta(U-V) &= 0 &\text{in }\R^2_+\\
        U-V &= 0 &\text{on }\partial \R^2_+
    \end{pde}
    \]
    and
    \[
    \max_{x\in B_r(0)} |U(x)-V(x)| = r^\alpha \max_{x\in B_1(0)}|U(x)-V(x)| = C\,r^\alpha
    \]
    for all $r>0$. By Liouville's Theorem (Theorem \ref{theorem liouville}), we find that $U-V$ is a harmonic polynomial of degree $\leq \lfloor \alpha\rfloor$ where $\lfloor\cdot\rfloor$ denotes the smallest integer below $\alpha$. Additionally, we conclude that $U-V$ is positively homogeneous of degree $\alpha$.

    Since there is no non-zero polynomial which is homogeneous of degree $\alpha\notin\N$, we find that $U-V\equiv 0$. A similar argument applies in the integer case.
\end{proof}

In general, translating the expressions for $u_\alpha$ in Theorem \ref{theorem complex representation} into Cartesian coordinates using
\begin{align*}
r^\alpha e^{i\alpha\phi} &= (x^2+y^2)^\frac\alpha2 \big(\cos (\alpha\,\cot^{-1}(x/y)) +i\, \sin(\alpha\,\cot^{-1}(x/y))\big)
\end{align*}
does not yield a simple and satisfying expression. We give some examples where it does.

\begin{example}[$\alpha=1,2,3$]
    For integer values of $\alpha$, we can easily give explicit expressions from the binomial formula, for instance:
    \begin{align*}
        u_{1}(x,y) &= \left(\frac{1}{\pi}\arctan{\left(\frac{x}{y}\right)}+\frac{1}{2}\right)\,x \:-\: \frac{1}{\pi}\log\big(\sqrt{x^2 + y^2}\big)\,y\\
        u_{2}(x,y) &= \left(\frac{1}{\pi}\arctan{\left(\frac{x}{y}\right)}+\frac{1}{2}\right) \big(x^2 - y^2\big) \:-\: \frac{1}{\pi}\log\left(\sqrt{x^2 + y^2}\right) \,2xy\\
        u_{3}(x,y) &= \left(\frac{1}{\pi}\arctan{\left(\frac{x}{y}\right)}+\frac{1}{2}\right) \big(x^3 - 3xy^2\big) \:-\: \frac{1}{\pi}\log\big(\sqrt{x^2 + y^2}\big)\: \big(3x^2y -y^3 \big).
    \end{align*}
\end{example}

\begin{example}[$\alpha = 1/2$]
For $\alpha=1/2$, we can find an explicit formula. This case is particularly convenient since $\cot(\pi/2) = 0$, so
\[
u_{1/2}(x,y) = \Re\big((x+iy)^{1/2}\big) = r^{1/2} \cos (\phi/2)
\]
in polar coordinates. Recall the half-angle formula
\[
\cos(\phi/2) = \sqrt{\frac{1+\cos (\phi)}2}.
\]
Respecting the homogeneity of the function, we make the ansatz
\[
    u_{1/2}(x,y) = r^{1/2}\sqrt{\frac{1+\cos{(\phi)}}{2}}=\sqrt{\sqrt{x^{2}+y^{2}}\left(\frac{1 + (x/\sqrt{x^{2}+y^{2}})}{2}\right)} = \sqrt{\frac{\sqrt{x^2+y^2}+x}2}.
\]
It is easy to verify that $u_{1/2}$ as defined is harmonic since 
\begin{align*}
    \partial_{x}u_{1/2}(x,y) &= \frac{\sqrt{\sqrt{x^{2}+y^{2}} + x }}{2\sqrt{2}\sqrt{x^{2}+y^{2}}}\\
\partial_{xx}u_{1/2}(x,y) &= \frac{(\sqrt{x^{2}+y^{2}}-2x)(\sqrt{\sqrt{x^{2}+y^{2}}+x})}{4\sqrt{2}(x^{2}+y^{2})^{3/2}}\\
\partial_{y}u_{1/2}(x,y) &= \frac{y}{2\sqrt{2}\sqrt{x^{2}+y^{2}}\sqrt{\sqrt{x^{2}+y^{2}}+x}}\\
\partial_{yy}u_{1/2}(x,y) &= - \frac{(\sqrt{x^{2}+y^{2}}-2x)(\sqrt{\sqrt{x^{2}+y^{2}}+x})}{4\sqrt{2}(x^{2}+y^{2})^{3/2}}
\end{align*}
and thus $\Delta u_{1/2}\equiv 0$. The boundary condition is satisfied since
\[
    \lim_{y\to 0^{+}} u_{1/2}(x,y) = \sqrt{\frac{\sqrt{x^{2}+0}+x}{2}} = \sqrt{\frac{|x|+x}{2}} = \sqrt{\ReLU(x)}.
\]
It is easy to see that the given expression for $u_{1/2}$ satisfies $u(\lambda x) = \lambda^{1/2}u(x)$ for $\lambda\geq 0$, meaning that we have identified the (unique) $1/2$-homogeneous solution of the equation.
\end{example}

\begin{example}
For $\alpha=3/2$, similar to $\alpha=1/2$ we have $\cot{(3\pi/2)}=0$, so
\[u_{3/2}(x,y) = \Re{((x+iy)^{3/2})} = r^{3/2}\cos{(3\phi/2)}\]
in polar coordinates. Recall the triple-angle formula
\[
\cos (3\theta) =\cos^3\theta - 3\sin^2\theta\,\cos\theta.
\]
which indicates that 
\[
u_{3/2}(x,y) = r^{3/2}\left(\frac{1+\cos{(\phi)}}{2}\right)^{3/2} - 3r^{3/2}\left(\frac{1-\cos{(\phi)}}{2}\right)\sqrt{\frac{1+\cos{(\phi)}}{2}}
\]
and in Cartesian variables
\[
    u_{3/2}(x,y) = \left(\frac{\sqrt{x^{2}+y^{2}}+x}{2}\right)^{3/2} - 3 \left(\frac{\sqrt{x^{2}+y^{2}}+x}{2}\right)^{1/2}\left(\frac{\sqrt{x^{2}+y^{2}}-x}{2}\right).
\]
Harmonicity and attainment of the boundary condition can be verified explicitly as above.
\end{example}

\subsection{\texorpdfstring{Small fractional powers: ReLU$^\alpha$ for $0<\alpha<1$}{Small fractional powers of ReLU activation}}\label{section fractional alpha}

We now show that for non-integer powers of ReLU activation $\sigma_\alpha$ with $\alpha\in(0,1)$, the harmonic function $u_\alpha$ is indeed an element of the $\sigma_\alpha$-Barron space. As a by-product of our proof, the same holds for bounded Lipschitz-continuous activation functions such as tanh or sigmoid.

\begin{lemma}\label{lemma other representation}
Let $g:\R\to\R$ be a continuous function such that $|g(t)| \leq C(1+|t|)^\alpha$ for some $C>0$ and $\alpha\in(0,1)$. There exists a unique solution of the boundary value problem
\[
\begin{pde}
-\Delta u &= 0 &\text{in }\{(x_1,x_2):x_2>0\} \\
u&= g(x_1) & x_2 =0\\
\frac{u(x)}{\|x\|} &\to 0  &\text{as }\|x\|\to \infty
\end{pde}
\]
and it is given by 
\begin{equation}\label{eq other representation formula}
u(x,y) = \frac1\pi \int_\R \frac{1}{1+t^2}\,g(x+ty)\dt.
\end{equation}
\end{lemma}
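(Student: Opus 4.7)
The formula \eqref{eq other representation formula} is the Poisson integral for the upper half-plane in disguise: substituting $s = x+ty$ gives
\[
u(x,y) = \frac{1}{\pi}\int_\R \frac{y}{y^2 + (s-x)^2}\,g(s)\ds,
\]
so the plan is to verify existence, harmonicity, boundary attainment, growth, and uniqueness in that order, with the growth assumption on $g$ playing the decisive role in making all of the integrals converge and in setting up the uniqueness step.

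\textbf{Existence, harmonicity, and growth.} I would first check that the integral in \eqref{eq other representation formula} converges absolutely: using $|g(x+ty)| \leq C(1+|x|+|t||y|)^\alpha$ and the subadditivity $(a+b)^\alpha \leq a^\alpha + b^\alpha$ valid for $\alpha \in (0,1)$, the integrand is bounded by a constant times $(1+|x|^\alpha+|t|^\alpha|y|^\alpha)/(1+t^2)$, which is integrable on $\R$ because $\alpha < 1 < 2$. The same dominated-convergence argument, applied to spatial difference quotients of the Poisson kernel $P_{x,y}(s) = \tfrac{1}{\pi}\tfrac{y}{y^2+(s-x)^2}$ (all of whose derivatives decay like negative powers of $s$), lets me differentiate twice under the integral sign. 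Since $\Delta_{x,y} P_{x,y}(s) = 0$ for $y>0$, this yields $\Delta u = 0$ in $\R^2_+$. The same pointwise estimate also gives
\[
|u(x,y)| \leq C'\bigl(1+|x|^\alpha + |y|^\alpha\bigr),
\]
so $u(x,y)/\|(x,y)\| \to 0$ at infinity, confirming the growth condition.

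\textbf{Boundary values.} The standard trick is to exploit the fact that the Poisson kernel integrates to $1$. Fixing a boundary point $(x_0,0)$, I would write
\[
u(x,y) - g(x_0) = \frac{1}{\pi}\int_\R \frac{y}{y^2+(s-x)^2}\bigl(g(s)-g(x_0)\bigr)\ds,
\]
split the integral into a small neighborhood $|s-x_0|<\delta$ and its complement, use continuity of $g$ to make the near-part small uniformly for $(x,y)$ close to $(x_0,0)$, and use the decay of the Poisson kernel away from $s=x$ (combined with the sublinear growth of $g$, which keeps the far-part integrable and bounded by $Cy$) to handle the remainder. This is a mild extension of the usual boundary-limit result for bounded continuous data, and the sublinear bound $|g(s)| \leq C(1+|s|)^\alpha$ is exactly what makes it go through.

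\textbf{Uniqueness.} If $u_1,u_2$ are two solutions, then $v := u_1-u_2$ is harmonic in $\R^2_+$, continuous up to the boundary, vanishes on $\partial \R^2_+$, and satisfies $v(x)/\|x\|\to 0$ at infinity. The latter hypothesis implies that $\sup_{B_r(0) \cap \R^2_+} |v|\,/\,r \to 0$ as $r\to\infty$, so Theorem \ref{theorem liouville} (applied to the half-space case with $k=0$) forces $v$ to be a harmonic polynomial of degree at most $0$, i.e., a constant; the zero boundary condition then gives $v \equiv 0$. The main technical point (and the only step that is not purely bookkeeping) is the boundary-value verification above, where the unbounded but sublinear data requires combining local continuity with control of the Poisson kernel at infinity.
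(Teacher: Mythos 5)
Your proposal follows essentially the same route as the paper: recognize \eqref{eq other representation formula} as the half-plane Poisson integral via the substitution $s = x+ty$, verify harmonicity and boundary attainment by the standard kernel argument (the paper simply cites Evans' Theorem 2.2.14 and notes the growth hypothesis replaces boundedness; you fill in the near/far split and differentiation under the integral explicitly, which is fine), derive the sublinear growth bound $|u(x,y)| \lesssim (1+\|(x,y)\|)^\alpha$, and conclude uniqueness from the half-space Liouville theorem with $k=0$ together with the zero boundary condition. No gaps; the argument is correct and matches the paper's.
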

\begin{proof}
The fact that \eqref{eq other representation formula} defines a harmonic function with boundary values $\sigma$ is a straight-forward modification of the proof of \cite[Theorem 2.2.14]{evans} with the substitution
\[
u(x, y) = \frac1\pi\int_\R \frac{g(s)}{(x-s)^2 +y^2}\,y\ds = \frac1\pi\int_\R \frac{g\left(y\,\frac{s-x}y+x\right)}{\left(\frac{x-s}y\right)^2 +1}\,\frac1y\ds = \frac1\pi\int_\R \frac{g\left(yt+x\right)}{t^2 +1}\dt
\]
i.e.\ $t = (x-s)/y$.
The only difference is the growth condition at infinity: \cite{evans} assumes that $g$ is bounded, but the proof applies in our more general context.

We note that
\[
|u(x,y)| \leq \frac1\pi \int_\R \frac{C(1+ |x| + |t|\,|y|)^\alpha}{1+t^2} \dt \leq \frac{C\,(1+|x|+|y|)^\alpha}\pi\int_\R \frac{(1+|t|)^\alpha}{1+t^2}\dt \leq C' \big( 1+ \|(x,y)\|\big)^\alpha
\]
for some $C'>0$. In particular, the representation formula provides a harmonic function which grows slower than linearly at infinity. As in Lemma \ref{lemma unique homogeneous solution}, the uniqueness follows from Liouville's Theorem. Namely, if $v$ is another harmonic function on $\R^2_+$ with boundary values $\sigma$ , then $u-v$ is a harmonic function on $\R^2_+$ with boundary values $0$. If both $u$ and $v$ grow sublinearly at infinity, so does their difference, meaning that $u-v$ is a harmonic polynomial of degree zero (i.e.\ a constant function). Since $u= v$ on $\partial\R^2_+$, the constant must be zero.
\end{proof}

The conditions on $g$ could obviously be weakened, but we see little application for such extensions. Note that in this form, Lemma \ref{lemma other representation} applies not only to powers of the ReLU activation $\sigma_\alpha(x)$ with $\alpha<1$, but also to popular activation functions such as tanh or sigmoid.

\begin{corollary}
    Let $\alpha\in(0,1)$ and $g \in \B_\alpha(\R)$. Then there exists a unique solution $u_\alpha\in \B_\alpha(\R^2_+)$ of the PDE
    \begin{equation}\label{eq harmonic half-space alpha}
    \begin{pde}
        -\Delta u &= 0 & \text{in }\R^2_+\\
        u &= g &\text{on }\partial\R^2_+.
    \end{pde}
    \end{equation}
\end{corollary}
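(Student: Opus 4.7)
The plan is to combine the Poisson-type representation formula in Lemma \ref{lemma other representation} with the Barron decomposition of the boundary data $g$, and then to reinterpret the resulting double integral as a Barron function on the half-space. Since $g\in\B_\alpha(\R)$ automatically satisfies $|g(s)|\leq \|g\|_{\B_\alpha}(1+|s|)^\alpha$ with $\alpha<1$, Lemma \ref{lemma other representation} applies and produces the unique sublinearly-growing harmonic extension
$$u(x,y)=\frac{1}{\pi}\int_\R \frac{g(x+ty)}{1+t^2}\,dt.$$

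Next I would insert $g(s)=\E_{(a,w,b)\sim\pi}[a\,\sigma_\alpha(ws+b)]$ for a representing measure $\pi$, apply Fubini (justified by the same growth bound), and use the identity $w(x+ty)+b=(w,wt)\cdot(x,y)+b$. Each summand thereby becomes a two-dimensional $\sigma_\alpha$-neuron with weight $(w,wt)$ and bias $b$. Pushing the probability measure $\pi\otimes\frac{dt}{\pi(1+t^2)}$ forward under $(a,w,b,t)\mapsto(a,(w,wt),b)$ produces a probability measure $\tilde\pi$ on $\R\times\R^2\times\R$ realizing
$$u(x,y)=\E_{(\tilde a,\tilde w,\tilde b)\sim\tilde\pi}\bigl[\tilde a\,\sigma_\alpha\bigl(\tilde w\cdot(x,y)+\tilde b\bigr)\bigr],$$
so $u\in\B_\alpha(\R^2_+)$ as soon as the moment bound is verified. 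For that bound I would use subadditivity $(A+B)^\alpha\leq A^\alpha+B^\alpha$ for $\alpha\in(0,1)$ to split
$$\bigl(|w|\sqrt{1+t^2}+|b|\bigr)^\alpha\leq |w|^\alpha(1+t^2)^{\alpha/2}+|b|^\alpha,$$
so that integrating against $\tilde\pi$ and the bounds $|w|^\alpha,|b|^\alpha\leq(|w|+|b|)^\alpha$ yield
$$\|u\|_{\B_\alpha(\R^2_+)}\leq (C_\alpha+1)\,\|g\|_{\B_\alpha(\R)}, \qquad C_\alpha=\frac{1}{\pi}\int_\R(1+t^2)^{\alpha/2-1}\,dt,$$
after taking the infimum over $\pi$. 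Uniqueness is inherited from Lemma \ref{lemma other representation}, since every $v\in\B_\alpha(\R^2_+)$ is bounded by $\|v\|_{\B_\alpha}(1+|\cdot|)^\alpha$ and hence grows sublinearly for $\alpha<1$.

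The main obstacle, and the precise place where the hypothesis $\alpha<1$ is used, is the convergence of $C_\alpha$: the integral $\int_\R(1+t^2)^{\alpha/2-1}dt$ is finite if and only if $\alpha<1$, so the very same push-forward strategy degenerates at $\alpha=1$. This limitation is consistent with the later parts of the paper, where Barron regularity of harmonic extensions genuinely fails for integer powers of ReLU, as already foreshadowed by Corollary \ref{corollary non-barron example}.
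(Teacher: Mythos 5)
Your proposal is correct and follows essentially the same route as the paper: apply Lemma \ref{lemma other representation}, insert the Barron decomposition of $g$, invoke Fubini, and realize the resulting harmonic extension as a push-forward of $\pi\otimes\frac{dt}{\pi(1+t^2)}$ along $(a,w,b,t)\mapsto(a,(w,tw),b)$, with uniqueness via Liouville. The only cosmetic difference is the moment estimate -- you use the $\ell^2$ weight norm and subadditivity of $(\cdot)^\alpha$ to arrive at $\int_\R(1+t^2)^{\alpha/2-1}\,dt$, whereas the paper bounds $|w|+|tw|+|b|\leq(1+|t|)(|w|+|b|)$ and lands on $\int_\R\frac{(1+|t|)^\alpha}{1+t^2}\,dt$; both converge exactly when $\alpha<1$.
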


\begin{proof}
    Let $x\in\R$ and $y\geq 0$ be fixed and $\mu$ a probability measure on (the Borel $\sigma$-algebra of) $\R^3$ such that
    \[
    g(t) = \E_{(a,w,b)\sim\mu}\big[a\,\sigma_\alpha(w t+b)\big].
    \]
    Then the unique sublinearly growing solution of the PDE \eqref{eq harmonic half-space alpha} is
    \begin{align*}
    u_\alpha(x,y) &= \frac1\pi\int_\R \frac{g(x+ty)}{1+t^2}\dt\\
        &= \int_\R \left(\int_{\R^3}  a\,\sigma_\alpha(w(x+ty)+b) \d\mu_{(a,w,b)}\right)\, \frac{1}{\pi(1+t^2)}\dt\\
        &=\int_{\R\times \R^2\times \R} \sigma_\alpha(w_1x+w_2y+b)\d\mu^\sharp_{(a,w,b)}
    \end{align*}
    where the probability measure $\mu^\sharp$ is the push-forward of the product probability measure 
    \[
    \mu \otimes \left(\frac1{\pi(1+t^2)}\cdot \L^1\right)
    \]
    on $\R^3\times\R$ along the map $(a,w,b; t) \mapsto (a, w, tw, b)$. The identification can be made by Fubini's theorem since the integrand is continuous, hence jointly Borel-measurable and, due to the moment bound on $\mu$, integrable. We observe that
    \[
    \E_{(a, w, b)\sim\mu^\sharp}\big[|a|\big(\|w\|+ |b|\big)^\alpha\big]
    =  \E_{(a, w, b,t)}\big[|a|\big(|w| + |tw|+ |b|\big)^\alpha\big] \leq \E_{(a, w, b)\sim\mu}\big[|a|\big(|w| + |b|\big)^\alpha\big]\int_\R \frac{(1+|t|)^\alpha}{\pi(1+t^2)}\dt.
    \]
    We conclude that $u_\alpha\in \B_\alpha$. Since functions in $\B_\alpha$ grow sublinearly at infinity, we see that $u_\alpha$ is indeed the only solution to \eqref{eq harmonic half-space alpha} in $\B_\alpha$ by Liouville's Theorem.
\end{proof}

\begin{remark}
While we do not develop a functions space theory for it, we note Lemma \ref{lemma other representation} also applies to tanh activation, assuming that the magnitude of $|a|(|w|+1)^\alpha$ is controlled for some $\alpha\in[0,1)$. This norm bound suffices to control the $C^{0,\alpha}$-norm of a superposition of single neuron activations (or a `tanh-Barron function'), but not to control e.g.\ the Lipschitz-constant.

To see this, first note that $|a\,\tanh(w\cdot x + b)| \leq |a|$, i.e.\ the bound on $|a|(|w|+1)^\alpha$ yields an $L^\infty$-bound. Here the magnitude of the bias term is unconstrained, similar to \cite[Appendix A]{wojtowytsch2021kolmogorov}.

Additionally, note that $\sigma(x) := \tanh(x)$ satisfies
\[
-1 \leq\tanh\leq 1, \qquad \tanh(0) = 0, \qquad 0\leq \tanh' \leq 1.
\]
Hence, if $x,z\in\R$ we have
\[
\left| a\,\tanh(wx+b)- a\,\tanh(wz+b)\right| \leq |a|\,\big|(wx+b) - w(z+b)\big|\leq |a|\,|w(x-z)|.
\]
If $|w(x-z)|\leq 1$, this yields the bound
\[
\left| a\,\tanh(wx+b)- a\,\tanh(wz+b)\right| \leq |a|\,|w|^\alpha \,|x-z|^\alpha 
\]
while if $|w(x-z)| \geq 1$, then 
\[
\left| a\,\tanh(wx+b)- a\,\tanh(wz+b)\right| \leq 2|a| \leq 2\,|a|\,|w|^\alpha|x-z|^\alpha.
\]
Combining the two cases, we find a bound on the $C^{0,\alpha}$-semi-norm since the bound applies to any pair $x, z$. The bound can be extended to (continuous) superpositions of single neuron activations $a\,\tanh(wx+b)$ as in Section \ref{section barron background}.
\end{remark}

Finally, we note that the continuity of $g$ is only used to prove that the boundary values are attained uniformly (and this can be localized as in the proof of \cite[Theorem 2.2.14]{evans}. With lower regularity, the same obviously holds true for the classical Heaviside activation function $\sigma_0$ where the limit is attained at points $x^*\neq 0$. In this case, the integral can easily be computed directly:
\[
u(x,y) = \frac1\pi\int_\R\frac1{1+t^2} 1_{\{x/y+t>0\}} \dt = \frac1\pi \int_{-x/y}^\infty \frac1{1+t^2}\dt = \frac12 - \frac1\pi \arctan(-x/y) = \frac12 + \frac1\pi \arctan(x/y).
\]
In this way, $\alpha=0$ would behave more like $\alpha\in(0,1)$ than $\alpha \in \mathbb N\setminus\{0\}$.

\subsection{Integer powers of ReLU activation}
\label{section integer alpha}

In this section, we consider the case $\alpha\in \mathbb N\setminus\{0\}$. Following standard notation, we denote $k:= \alpha$.

\begin{lemma}\label{lemma uk is not barron}
If $k = \alpha\in\N$, then $u_\alpha$ is not an element of the $\ReLU^\alpha$-Barron space.
\end{lemma}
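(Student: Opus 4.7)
The plan is to reduce the claim to the one-dimensional obstruction already packaged in Corollary \ref{corollary non-barron example} by slicing $u_\alpha$ along a suitable ray and invoking Lemma \ref{lemma dimension reduction 1}. I will find a radial direction in $\R^2_+$ along which $u_\alpha$ acquires a non-trivial $t^k\log t$ contribution that no Barron function on $\R$ can reproduce.

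First I would rewrite $u_k := u_\alpha$ in polar coordinates along a ray. Fix $\theta \in (0,\pi)$ and set $(x,y) = (t\cos\theta, t\sin\theta)$ for $t>0$, so that the segment $L := \{(t\cos\theta,t\sin\theta) : t \in (0,1)\}$ lies in $\R^2_+$. Using $\arctan(\cot\theta) = \pi/2 - \theta$ on $(0,\pi)$, $(x+iy)^k = t^k e^{ik\theta}$, and $\sqrt{x^2+y^2}=t$, the integer-case formula in Theorem \ref{theorem complex representation} collapses to
\[
u_k(t\cos\theta, t\sin\theta) \;=\; \left(1-\tfrac{\theta}{\pi}\right)\cos(k\theta)\, t^k \;-\; \tfrac{\sin(k\theta)}{\pi}\, t^k \log t .
\]
The coefficient of $t^k\log t$ vanishes only at $\theta \in \{n\pi/k : 0 \leq n \leq k\}$, so I can pick some $\theta \in (0,\pi)$ with $\sin(k\theta) \neq 0$.

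Next, assume for contradiction that $u_k \in \B_k(\R^2_+)$. Lemma \ref{lemma dimension reduction 1} then yields $u_L(t) := u_k(t\cos\theta, t\sin\theta) \in \B_k(0,1)$. The monomial $t \mapsto t^k = \sigma_k(t) + (-1)^k \sigma_k(-t)$ lies in $\B_k(\R)$, so subtracting $(1-\theta/\pi)\cos(k\theta)\,t^k$ from $u_L$ preserves Barron membership and leaves $-\tfrac{\sin(k\theta)}{\pi}\, t^k \log t \in \B_k(0,1)$. Dividing by the nonzero constant and using that Barron representatives are continuous on all of $\R$, combined with the fact that $t^k \log t \to 0$ as $t \to 0^+$, promotes this to $t^k\log t \in \B_k([0,1])$, contradicting Corollary \ref{corollary non-barron example}.

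The substantive step is the polar computation that exposes the $t^k\log t$ term; once a ray with $\sin(k\theta) \neq 0$ is selected, the rest is bookkeeping. The only mild technicality I foresee is passing from the open interval produced by the dimension-reduction lemma to the closed interval appearing in Corollary \ref{corollary non-barron example}, but this is handled by the continuity of the Barron representative together with the vanishing of $t^k\log t$ at the origin.
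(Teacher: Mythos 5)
Your proof is correct, and it streamlines the argument. Both you and the paper reduce the claim to a one-dimensional slicing argument along a ray and invoke Corollary~\ref{corollary non-barron example} as the obstruction. The paper, however, first splits $u_k = u^r_k - u^i_k$, then proves the stronger fact that $u^r_k$ is a globally $\B_k$-Barron function on $\R^2_+$ (via the homogeneity reduction of Lemma~\ref{lemma dimension reduction 2} and the $(k+1)$-th derivative computation deferred to Appendix~\ref{appendix tedious calculation}), and separately shows $u^i_k\big|_L\notin\B_k$, so that $u_k$ cannot be Barron. You instead slice $u_k$ directly: along the ray, $u^r_k$ collapses to a scalar multiple of the monomial $t^k$, which is trivially in $\B_k(\R)$, so the only thing you need is that $t^k\log t\notin\B_k([0,1])$. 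This bypasses the tedious appendix calculation entirely. What the paper's longer route buys is the auxiliary fact $u^r_k\in\B_k(\R^2_+)$, which is reused later (Step 3 of Theorem~3.11 on the $|\log\eps|$ Barron-norm estimate for $u_{\eps,k}$), so the extra effort there is not wasted in the larger scheme of the paper; but for the present lemma alone your approach is cleaner. Your polar computation, the choice of $\theta$ with $\sin(k\theta)\neq 0$, and the passage from $\B_k(0,1)$ to $\B_k([0,1])$ via continuity of Barron representatives at the endpoints are all handled correctly.
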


\begin{proof}
    {\bf Proof strategy:} If $u:\R^2_+\to\R$ is a Barron function, then also the one-dimensional `slice' $\tilde u: \R\to\R$ given by $\tilde u(t) = u(x_0+tv)$ is a Barron function for any $x_0, v\in\R^2$ by Lemma \ref{lemma dimension reduction 1}. We will demonstrate that $u_k$ is not a Barron function by showing that its restriction to a suitable slice is not Barron in one dimension. This is a much easier task, since we can use the characterization of Lemma \ref{lemma one dimension derivatives} and its Corollary \ref{corollary non-barron example}.

    Recall that
    \begin{align*}
        u_k(x,y) &= \left(\frac1\pi \,\arctan(x/y) + 1/2\right)\Re\big((x+iy)^k\big) - \frac1\pi\,\log\big(\sqrt{x^2+y^2}\big) \,\Im\big((x+iy)^k\big)\\
        &=: u^r_k (x,y) - u^i_k(x,y).
    \end{align*}
    We will show that $u_k^r$ is a ReLU$^k$-Barron function, while $u_k^i$ is not, even when considering just the restriction to $L$. This implies that $u_k = u^r_k - u^i_k$ is not in $\B_k$.

    {\bf Step 1. Non-Barron component.} To find a suitable slice, pick any $\theta$ such that $k\theta\notin \pi \Z$. Consider the line segment: 
    \[
    L = \{(x,y): y = x\tan{(\theta)} \text{ and } 0\leq x\leq 1\}.
    \]
    
     On the line segment $L$ we have
    \begin{align*}
    \tilde u^i_k(x) &= \frac{1}{\pi}\log{\left(\sqrt{x^{2}(1+\tan^{2}(\theta))}\right)}\left(\sqrt{x^{2}(1+\tan^{2}(\theta))}\right)^{k}\sin{(k\theta)}\\
    &= \frac{\sec^{k}{(\theta)}\sin{(k\theta)}}{\pi}\log{(x\sec{(\theta)})}x^{k}\\
    &= \frac{\sec^{k}{(\theta)}\sin{(k\theta)}}{\pi}\big(\log x + \log (\sec\theta)\big)x^{k}
    \end{align*}
    Since $k\theta\notin\pi \Z$, we find that the constant
    $c_{k\theta} = \sec^{k}(\theta)\sin{(k\theta)}/\pi$ is not zero. Additionally, we find that $x^k = \sigma_k(x)$ is in the $\sigma_k$-Barron space, while the second term $x\mapsto x^k\log x$ is not due to Corollary \ref{corollary non-barron example}.
    Thus, $u^i_k$ is not in $\B_k$ when restricted to $L$ and hence not in $\B_k(B_r(0))$ for any $r>0$.

    {\bf Step 2. Barron component.} On the other hand, let us show that $u^r_k$ is a Barron function on the whole space. We observe that $u^r_k(\lambda x, \lambda y) = \lambda^k u^r_k(x, y)$ for $\lambda >0$, so by Lemma \ref{lemma dimension reduction 2} it suffices to prove that
    \[
    \tilde u^r_k(\xi) := \left(\frac12 +\frac{\arctan(\xi)}\pi\right)\Re((\xi+i)^k)
    \]
    is in $\B_\alpha(\R)$. Again, we can use the criterion of Lemma \ref{lemma one dimension derivatives}. Naturally, it suffices to consider the function $\arctan(\xi) \,\Re((\xi+i)^k)$ since the polynomial component has $k+1$-th derivative zero and is automatically $\B_k$-Barron. The $k+1$-th derivative can be found in a direct, but lengthy computation. The details are given in Appendix \ref{appendix tedious calculation}.
\end{proof}

\begin{corollary}
    Let $\Omega\subseteq \R^2$ be open and $0\in\Omega$. Then there exists no solution $u\in \B_k(\Omega \cap \R^2_+)$ of the partial differential equation
    \begin{equation}\label{eq pde corr 310}
    \begin{pde}
        -\Delta u&=0 &\text{in }\Omega \cap \R^2_+\\
        u &= \sigma_k(x_1) &\text{on }\Omega \cap \partial \R^2_+.
    \end{pde}
    \end{equation}
\end{corollary}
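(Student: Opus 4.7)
I would argue by contradiction: suppose $u \in \B_k(\Omega \cap \R^2_+)$ solves \eqref{eq pde corr 310}. Setting $h := u - u_k$, where $u_k$ is the explicit solution on all of $\R^2_+$ from Theorem \ref{theorem complex representation}, $h$ is harmonic on $\Omega \cap \R^2_+$ and vanishes continuously on $\Omega \cap \partial\R^2_+$ (both $u$ and $u_k$ equal $\sigma_k$ there; $u$ is continuous up to the boundary thanks to the embedding $\B_k \hookrightarrow C^{k-1,1}$ from Theorem \ref{theorem integer power embedding}, and $u_k$ by direct inspection of Theorem \ref{theorem complex representation}). By Schwarz reflection, $h$ extends by odd reflection to a harmonic, and therefore real-analytic, function on some open ball $B_r(0) \subseteq \Omega$ around the origin.

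The plan is now to slice along a line through the origin and invoke Corollary \ref{corollary non-barron example}, mirroring the structure of the proof of Lemma \ref{lemma uk is not barron}. Recall the decomposition $u_k = u_k^r - u_k^i$ from that proof, in which $u_k^r$ is Barron on all of $\R^2_+$ (Step~2) while the restriction of $u_k^i$ to a ray through the origin at angle $\theta$ with $k\theta \notin \pi\Z$ is, modulo a polynomial, a nonzero multiple of $t^k \log t$ (Step~1). I would pick such a $\theta \in (0,\pi)$ and an $\epsilon > 0$ small enough that the line segment $L = \{(t\cos\theta, t\sin\theta) : t \in (0,\epsilon)\}$ lies inside $B_r(0) \cap \R^2_+$. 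Writing $\tilde f(t) := f(t\cos\theta, t\sin\theta)$, Lemma \ref{lemma dimension reduction 1} applied to $u$ and to $u_k^r$ gives $\tilde u, \tilde u_k^r \in \B_k(0,\epsilon)$; since $\tilde h$ is the restriction of a real-analytic function on a neighborhood of $[0,\epsilon]$, a $C^\infty$ cutoff produces a compactly supported $C^{k+1}$ extension to $\R$ with bounded $(k+1)$-th derivative, so Lemma \ref{lemma one dimension derivatives} yields $\tilde h \in \B_k(0,\epsilon)$ as well. Rearranging,
\[
\tilde u_k^i \;=\; \tilde u_k^r + \tilde h - \tilde u \;\in\; \B_k(0,\epsilon),
\]
which contradicts the computation from Lemma \ref{lemma uk is not barron}: a Barron extension of $\tilde u_k^i$ would agree on $(0,\epsilon)$ with a nonzero multiple of $t^k \log t$ plus a polynomial, which is impossible since a $\B_k$-function on $\R$ lies in $C^{k-1,1}$, forcing its $k$-th derivative to be bounded near $0$, whereas that of $t^k\log t$ blows up like $\log t$.

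The main subtlety I anticipate is the reflection step: one has to know that $u$ and $u_k$ attain their common boundary values continuously along all of $\Omega \cap \partial\R^2_+$ (not just away from the origin, where $\sigma_k$ is $C^\infty$), so that $h$ is genuinely continuous up to the boundary with value $0$ and Schwarz reflection can be applied across every point of $B_r(0) \cap \partial\R^2_+$. Both ingredients are available, as noted above. A secondary, purely technical point is that Corollary \ref{corollary non-barron example} is stated on $[0,1]$, whereas I need the analogous obstruction on $(0,\epsilon)$; the same $(k+1)$-th derivative calculation transfers verbatim, and the contradiction can alternatively be phrased via $\B_k \hookrightarrow C^{k-1,1}$ since $\partial_t^k(t^k \log t)$ is unbounded on any interval adjacent to $0$.
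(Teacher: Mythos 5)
Your argument is correct, and the overall skeleton agrees with the paper's: both proofs introduce $h = u - u_k$, observe that $h$ vanishes on the boundary and extends smoothly to a ball $B_r(0)$ by odd reflection, and then derive a contradiction with the non-Barron nature of $u_k$. The difference lies in how you conclude that $h$ contributes nothing to the Barron obstruction. The paper's route is to multiply $h$ by a smooth cutoff supported in $B_r(0)$ and invoke the two-dimensional Fourier-based embedding $H^t(\R^2) \hookrightarrow \B_k$ of Theorem~\ref{theorem fractional sobolev embedding} to produce a Barron function $W$ agreeing with $h$ on $B_{r/2}$, so that $u_k = u - W$ would have to lie in $\B_k(B_{r/2})$, contradicting Lemma~\ref{lemma uk is not barron} directly as a black box. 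You instead slice to one dimension immediately and use the derivative characterization of Lemma~\ref{lemma one dimension derivatives}: $\tilde h$ extends to a compactly supported $C^{k+1}$ function on $\R$, hence is in $\B_k(0,\eps)$, and $\tilde u_k^i = \tilde u_k^r + \tilde h - \tilde u$ then has to be $\B_k(0,\eps)$, contradicting Corollary~\ref{corollary non-barron example} (and, as you note, the $C^{k-1,1}$ embedding suffices to make the $(0,\eps)$-vs-$[0,1]$ discrepancy irrelevant). Your route buys some elementarity, avoiding the Fourier-analytic embedding theorem entirely in favor of the one-dimensional ODE-style characterization; the cost is that you must reopen the proof of Lemma~\ref{lemma uk is not barron} and reuse its internal decomposition $u_k = u_k^r - u_k^i$ and the explicit $t^k\log t$ computation along the slice, rather than citing it as a self-contained fact.
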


\begin{proof}
We have seen that $u_k\notin \B_k$. If $u$ is any other solution of \eqref{eq pde corr 310}, then $-\Delta (u-u_k) = 0$ in $B_r(0)\cap \R^2_+$ and $u-u_k=0$ on $B_r(0)\cap \partial\R^2_+$ for $r$ so small that $B_r(0)\subseteq \Omega$.

Arguing by odd reflection as in the proof of Theorem \ref{theorem liouville}, we find that $u-u_k\in C^\infty(B_r(0))$. In particular, if we choose a cut-off function $\eta\in C_c^\infty(\R^2)$ such that $\eta \equiv 1$ on $B_{r/2}(0)$ and $\eta \equiv 0$ outside $B_r(0)$, then $\eta u\in C_c^\infty(\R^2)$. By Theorem \ref{theorem fractional sobolev embedding} can conclude that there exists $W\in\B_k$ such that $W\equiv u\eta \equiv u$ in $B_{r/2}$. 

If $u\in\B_k(\Omega\cap \R^2_+)$, then $u = u_k + (u-u_k) = u_k + W$ on $B_{r/2}$, meaning that $u_k = u-W \in \B_k(B_{r/2})$, which contradicts Lemma \ref{lemma uk is not barron}.
\end{proof}

While $u_k$ is {\em not} a Barron function, we now show that it can be approximated well by Barron functions of low Barron norm in various norms. In the following, we denote
\[
u_{\eps, k}(x,y) = \left(\frac12 + \frac{\arctan(x/y)}\pi\right)\Re\big((x+iy)^k\big) -\frac{\log(x^2+y^2+\eps^2)}{2\pi}\,\Im\big((x+iy)^k\big) =: u^r_k(x,y) - u^i_{\eps,k}(x,y).
\]
In particular, we have $u_k = u_{0,k}$.

\begin{theorem}
Let $R>0$ and $k\in \mathbb N$, $k\geq 2$. There exists $\eps_0>0$ depending on $R$ and $k$ such that
\begin{enumerate}
    \item The bound 
    \[
    \|u_{\eps, k}\|_{\B_k(B_R(0))}\leq C|\log\eps|
    \]
    holds for all $k\in\N$ and $\eps\in(0,\eps_0)$ and a constant $C$ which depends on $k$ and $R$, but not $\eps$.
    
    \item For $k\geq 2$, there exist constants $c,C>0$ such that for all $p\in[1,\infty]$ and all sufficiently small $\eps\in(0,\eps_0)$, we have
    \[
    c\,R^{k-2}\eps^2\leq \|u_{\eps,k}- u_k\|_{L^p(B_R(0)} \leq C\, R^{k-2}\eps^2.
    \]

    \item There exists a constant $C>0$ such that for all $p\in[1,\infty]$ and all $\eps\in(0,\eps_0)$, we have
    \[
    \|\nabla (u_{\eps,k}- u_k)\|_{L^p(B_R(0)} \leq C\eps^{2/p}.
    \]

    \item There exists a constant $C>0$ depending on $k, p, R$ such that for all $p\in[1,\infty]$ and all $\eps\in(0,\eps_0)$, we have 
    \[
    \|D^2(u_{\eps,k}-u_k)\|_{L^p(B_R(0)} \leq C\eps^{2/p}
    \]
    if $k>2$ or $p>1$ and $ \|D^2(u_{\eps,k}-u_k)\|_{L^p(B_R(0)} \leq C\eps^2|\log\eps|$ if $k=2$ and $p=1$.
\end{enumerate}
\end{theorem}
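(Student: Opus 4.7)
Throughout I would write $Q(x,y) = \Im((x+iy)^k) = r^k \sin(k\theta)$, a harmonic polynomial of degree $k$ with $|D^j Q|\lesssim r^{k-j}$ for $j\leq k$ and $D^{k+1}Q\equiv 0$. The starting point is the explicit formula, obtained by direct subtraction:
\[
u_{\eps,k}(x,y) - u_k(x,y) = -\frac{1}{2\pi}\,\log\!\left(1+\tfrac{\eps^2}{x^2+y^2}\right) Q(x,y).
\]

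For parts (2)--(4), my plan is to estimate this difference and its derivatives pointwise and then integrate. I would use $\log(1+\eps^2/r^2)\leq\eps^2/r^2$ on $\{r\geq\eps\}$ and $\log(1+\eps^2/r^2)\leq C(1+\log(\eps/r))$ on $\{r\leq\eps\}$, together with $|\nabla\log(1+\eps^2/r^2)|\sim\min(\eps^2/r^3,\, 1/r)$ and $|D^2\log(1+\eps^2/r^2)|\sim\min(\eps^2/r^4,\, 1/r^2)$. After expanding $D^j(u_{\eps,k}-u_k)$ by Leibniz and combining with the $r^{k-j}$-vanishing of $D^j Q$, I would split $B_R=B_\eps\sqcup(B_R\setminus B_\eps)$. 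On $B_\eps$ the substitution $r=\eps s$ reduces each $L^p$-integral to a universal constant times $\eps^{\gamma}$. On $B_R\setminus B_\eps$ one is left with $\eps^{2p}\int_\eps^R r^{\beta}\,dr$, which is bounded except in the borderline case $k=2$, $p=1$ of (4), where this integral produces the extra $|\log\eps|$-factor. For the matching lower bound in (2), I would choose $\theta_0$ with $\sin(k\theta_0)\neq 0$ and $r_0\asymp R$, at which $|u_{\eps,k}-u_k|\geq c\,R^{k-2}\eps^2$ on a set of positive measure for small $\eps$.

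For part (1) the plan is to decompose $u_{\eps,k} = u_k^r - u_{\eps,k}^i$, noting that $u_k^r$ lies in $\B_k$ with norm independent of $\eps$ by the second step of the proof of Lemma \ref{lemma uk is not barron}. For the imaginary part I would split
\[
u_{\eps,k}^i = \frac{\log\eps^2}{2\pi}\,Q + \frac{1}{2\pi}\log\!\left(1+\tfrac{x^2+y^2}{\eps^2}\right)\,Q.
\]
The first summand is a harmonic polynomial of degree $k$ multiplied by a scalar of order $|\log\eps|$, so its $\B_k(B_R)$-norm is controlled by $|\log\eps|\cdot\|Q\|_{\B_k(B_R)}\leq C|\log\eps|$. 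The crux is then to bound the second summand in $\B_k(B_R)$ uniformly in $\eps$.

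This uniform bound is the main obstacle. The naive route through Theorem \ref{theorem fractional sobolev embedding} fails because $D^j \log(r^2+\eps^2) \sim (r^2+\eps^2)^{-j/2}$ produces Sobolev norms that blow up polynomially in $1/\eps$; any successful argument must exploit that $Q$ vanishes to order $k$ at the origin to cancel the highest-order singularities in the Leibniz expansion. My plan is to attempt an explicit construction via the identity $\log(1+r^2/\eps^2) = \int_0^1 r^2/(\eps^2+tr^2)\,dt$, expressing the second summand as a one-parameter family of rational functions of $Q$ whose Barron norms sum uniformly in the parameter $t\in[0,1]$. Should that route fail, I would try a dyadic decomposition of $B_R$ into annuli $\{2^j\eps \leq r\leq 2^{j+1}\eps\}$ for $0\leq j\lesssim\log_2(R/\eps)$, on each of which a homogeneous approximation to $u_{\eps,k}^i$ (handled through Lemma \ref{lemma dimension reduction 2} applied to the slice at $y=1$) controls the local Barron contribution by an $\eps$-independent constant, with the $O(|\log\eps|)$ annuli summing to produce the logarithmic global bound.
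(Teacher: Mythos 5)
Your plan for parts (2)--(4) is essentially the paper's argument: pass to polar coordinates, use the explicit form $v_{\eps,k}=u_{\eps,k}-u_k=-\tfrac{1}{2\pi}\log(1+\eps^2/r^2)\,\Im((x+iy)^k)$, split $B_R$ at $r\sim\eps$ (equivalently, substitute $z=\eps/r$), and read off the powers of $\eps$ with the $k=2$, $p=1$ borderline case producing the extra logarithm. Your lower bound strategy in (2) also mirrors the paper's (evaluate near $r\sim R$ where $\log(1+\eps^2/r^2)\sim\eps^2/R^2$ and $|\sin(k\phi)|$ is bounded below on a set of positive measure).

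The problem is part (1). First, your dismissal of the route via Theorem \ref{theorem fractional sobolev embedding} is not justified: the paper in fact goes exactly that way. It is true that $D^j\log(r^2+\eps^2)$ behaves like $(r^2+\eps^2)^{-j/2}$, but once you multiply by the degree-$k$ polynomial $Q=\Im((x+iy)^k)$ and expand by Leibniz, all contributions with $i+j<l+m-k$ drop out (because $\partial^{l-i}_x\partial^{m-j}_yQ\equiv 0$). This is precisely the cancellation you correctly flag as necessary; the paper exploits it to show $\|u^i_{\eps,k}\|_{H^{k+2}(B_R)}^2\lesssim 1+|\log\eps|$ and $\|u^i_{\eps,k}\|_{H^{k+3}(B_R)}^2\lesssim\eps^{-1}$, and then gets the $\B_k$ bound by interpolation together with Theorem \ref{theorem fractional sobolev embedding}. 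So the Sobolev route is not naive---it is the working route.

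Second, and more seriously, your proposed decomposition cannot work as stated. You split $u^i_{\eps,k}=\tfrac{\log\eps^2}{2\pi}Q+g_\eps$ with $g_\eps=\tfrac{1}{2\pi}\log(1+r^2/\eps^2)Q$, claim the first summand has $\B_k(B_R)$-norm $\lesssim|\log\eps|$ (true), and then state that ``the crux is to bound $g_\eps$ in $\B_k(B_R)$ uniformly in $\eps$.'' But $g_\eps$ is \emph{not} uniformly bounded in $\B_k(B_R)$: taking any $(x_0,y_0)$ with $r_0=R/2$ and $\sin(k\theta_0)=1$, one has $|g_\eps(x_0,y_0)|=\tfrac{1}{2\pi}\log(1+R^2/(4\eps^2))(R/2)^k\sim\tfrac{1}{\pi}|\log\eps|(R/2)^k$, and since on a ball the $\B_k$-norm dominates the $L^\infty$-norm (up to a constant depending on $k$ and $R$), $\|g_\eps\|_{\B_k(B_R)}\gtrsim|\log\eps|$. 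The split merely moves the logarithm around; both summands scale like $|\log\eps|$, so nothing is gained. Your dyadic fallback is also incomplete as written: the $\B_k$-norm is defined through a single global integral representation and is not additive over a covering of $B_R$ by annuli, so it is not clear how to sum ``local Barron contributions'' without introducing a partition of unity and controlling the resulting Barron norms of the cutoffs times $u^i_{\eps,k}$, which brings you right back to the Sobolev-type estimates you were trying to avoid. In short, the correct mechanism for part (1) is the Leibniz cancellation fed into the Sobolev embedding and interpolation, not a decomposition into an $|\log\eps|$-piece plus a uniformly bounded remainder.
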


The case $k=1$ is considered separately in \cite{pde_relu}. The rate of convergence in $L^\infty$ is slower (namely $\eps$ rather than $\eps^2$), and the harmonic function $u_k$ fails to be $H^2$-regular, so additional limits are placed on the approximation of the Hessian.

\begin{proof}

\textbf{\bf Step 1a. Approximation in the $L^{\infty}$ norm.} 
Denote the difference $v_{\varepsilon,k} := u_{\varepsilon,k} - u_{0,k}$. For simplicity, we use polar coordinates. Since $|\log(r^2) - \log(r^2+\eps^2)| = \log(1+\eps^2/r^2)>0$, we have
\[
||v_{\varepsilon,k}||_{L^{\infty}(B^{+}_{R}(0))} 
    = \max_{0\leq r\leq R}\max_{0\leq\phi\leq\pi}\left(\log{\left(1 + \frac{\varepsilon^{2}}{r^{2}}\right)}r^{k}|\sin{(k\phi)}|\right)
    = \max_{0\leq r\leq R}\left(r^{k}\log{\left(1 + \frac{\varepsilon^{2}}{r^{2}}\right)}\right)
\]
The function on the right is continuous on $[0,R]$ and differentiable in $(0,R)$, so the maximum is attained. A priori, it can be attained either at an endpoint $r=0$ or $r=R$ or at an intermediate point at which $\frac{d}{dr} r^k \log(1+\eps^2/r^2)=0$. We consider all three cases separately:

\begin{enumerate}
\item If $r=0$, then $r^k\log(1+\eps^2/r^2) = 0$ by continuous extension. The maximum is not attained here.

\item If $r =R$, then
\[
r^k\log(1+\eps^2/r^2) = R^k \log\left(1+\eps^2/R^2\right) \sim R^{k-2}\eps^2
\]
if $\eps$ is sufficiently small compared to $1/R$ by Taylor expansion $\log(1+ z) \approx z$.

\item If the extremum is attained at an interior point $0<r<R$, then
\begin{align*}
0 &= \frac{d}{d r}r^k\log\left(1+\frac{\eps^2}{r^2}\right)\\ 
    &= kr^{k-1}\log{\left(1 + \frac{\varepsilon^{2}}{r^{2}}\right)} + r^{k}\frac{1}{1+ \frac{\varepsilon^{2}}{r^{2}}}\left(\frac{-2\varepsilon^{2}}{r^{3}}\right) 
    \\&= r^{k-1}\left(k\log{\left(1 + \frac{\varepsilon^{2}}{r^{2}}\right)} - \frac{2\varepsilon^{2}}{r^{2}+ \varepsilon^{2}}\right).
\end{align*}
Thus, for such a point we have
\[
\frac{d}{d r}\left(r^k\log\left(1+\frac{\eps^2}{r^2}\right)\right) = 0 \qquad\Rightarrow \quad  \log{\left(1 + \frac{\varepsilon^{2}}{r^{2}}\right)} = \frac{2}{k}\,\frac{\varepsilon^{2}}{r^{2}+ \varepsilon^{2}}.
\]
In this case, assuming that $k\geq 2$, we have
\[
r^k\log\left(1+ \frac{\eps^2}{r^2}\right) = r^k\cdot\frac2k \frac{\eps^2}{r^2+\eps^2}\leq \frac2k\,\eps^2r^{k-2} \leq \frac2k \,\eps^2R^{k-2}
\]
since $r\mapsto r^{k-2}$ is monotone increasing in $r$. This bound coincides with the one close to the boundary of the disk (up to constant). 
\end{enumerate}

{\bf Step 1b. Lower bound in $L^1$.} Let us argue that the rate of approximation cannot be improved by relaxing the order of integrability. Namely, since
\[
|v_{\eps,k}(r,\phi)| = \left|r^k \log\left(1+ \frac{\eps^2}{R^2}\right)\sin(k\phi)\right| \geq \left(\frac R2\right)^k \,\frac12\,\left(\frac{\eps^2}{R^2}\right) |\sin(k\phi)| = 2^{-(k+1)}R^{k-2}\eps^2\,|\sin(k\phi)|,
\]
for $r\in(R/2, R)$ and sufficiently small $\eps>0$, we see that $\|v_{\eps,k}\|_{L^1(B_R^+)} \geq cR^{k-2}\eps^2$ for a universal constant $c>0$.

\textbf{Step 2. Closeness of Gradients.} Again, we work in polar coordinates and note that
\[
\int_{B_R^+}\|\nabla v\|^p\dx = \int_0^\pi \int_0^R \left((\partial_r v)^2 + \left(\frac1r \,\partial_\phi v\right)^2\right)^\frac p2\, r\,\dr\d\phi.
\]
Denoting the unit vectors $e_r = (\cos \phi, \sin \phi)$ and $e_\phi = (-\sin\phi, \cos\phi)$, we have
\begin{align*}
\nabla v_{\varepsilon,k} &= \nabla u_{\epsilon,k} - \nabla u_{0,k}\\
    &=\nabla\left(r^{k}\sin{(k\phi)}\log{\left(1 + \frac{\varepsilon^{2}}{r^{2}}\right)}\right)\\
    &= \left\{\sin{(k\phi)}\,\partial_{r} \left(r^{k}\log{\left(1 + \frac{\varepsilon^{2}}{r^{2}}\right)} \right)\right\} \,e_r + \left\{\frac{r^{k}}{r}\log\left(1 + \frac{\varepsilon^{2}}{r^{2}}\right)\partial_{\phi}\sin(k\phi)\right\}\, e_\phi\\
    &= \left\{r^{k-1}\sin{(k\phi)}\left(k\log{\left(1 + \frac{\varepsilon^{2}}{r^{2}}\right)} - \frac{2\varepsilon^{2}}{r^{2}+\varepsilon^{2}}\right)\right\}e_r + \left\{kr^{k-1}\log{\left(1 + \frac{\varepsilon^{2}}{r^{2}}\right)}\cos{(k\phi)}\right\}e_\phi.
\end{align*}
We can estimate the separate terms occurring in the gradient separately. As before, we bound $\sin, \cos$ from above by $1$.
Then we have
\begin{align*}
\int_{B_R^+} \left|kr^{k-1}\log{\left(1 + \frac{\varepsilon^{2}}{r^{2}}\right)}\cos{(k\phi)}\right|^{p}\dx 
    &\leq k^{p}\pi\int_{0}^{R}r^{p(k-1)+1}\log^{p}{\left(1 + \frac{\varepsilon^{2}}{r^{2}}\right)}\dr\\
    &= k^{p}\pi\varepsilon^{p(k-1)+2}\int_{0}^{R}\log^{p}{\left(1 + \left(\frac{\varepsilon}{r}\right)^2\right)}\left(\frac r\eps\right)^{p(k-1)+1}\,\frac1\eps\dr\\
    &= k^{p}\pi\varepsilon^{p(k-1)+2}\int_{\varepsilon/R}^{\infty}\frac{\log^{p}{(1+z^{2})}}{z^{p(k-1)+3}}\dz
\end{align*}
with the substitution $z= \frac{\varepsilon}{r}$.
The integral can be estimated further by
\begin{align*}
\int_{\eps/R}^\infty \frac{\log^p(1+z^2)}{z^{(k-1)p+3}}\dz
    &\leq \int_{\varepsilon/R}^{5}\frac{\log^{p}{(1+z^{2})}}{z^{p(k-1)+3}}dz + \int_{5}^{\infty}\frac{\log^{p}{(1+z^{2})}}{z^{p(k-1)+3}}\dz\\
    &\leq \int_{\varepsilon/R}^5 z^{2p-p(k-1)+3}\dz + 3^p\int_{5}^{\infty}\left(\frac{\log(z^{k-1})}{z^{k-1}}\right)^p\frac1{z^3}\dz
\end{align*}
since $\log(1+z^2) \leq \log(2z^2) \leq \log(z^3) = 3\log(z) \leq 3\log(z^{k-1})$ for $z\geq 2$ and $k\geq 2$. For the lower integral, we used the fact that $\log(1) = 0$ and $\log'(\xi)\leq 1$ for $\xi\geq 1$, so $\log(1+z^2)\leq z^2$. The function $(\log \xi)/\xi$ is monotone decreasing on $[e, \infty)$ and $\log 5/5 \leq \frac13$, so 
\[
\int_{\eps/R}^\infty \frac{\log^p(1+z^2)}{z^{(k-1)p+3}}\dz \leq 
    \int_{\eps/R}^2 z^{p(3-k)+3}\dz + \left(\frac{3\log 5}5\right)^p \int_5^\infty \frac1{z^3}\dz \lesssim \left(\frac{\eps}R\right)^{p(3-k)+4} + \int_5^\infty \frac1{z^3}\dz
\]
if $p(3-k)+3 < 0$ so that the {\em lower} bound of the bottom integral dominates. Otherwise, the first integral is bounded by $5^{p(3-k)+4}$, which is constant in $\eps$. Overall:
\[
\int_{0}^R \left|k\,r^{k-1}\,\log\left(1+\frac{\eps^2}{r^2}\right)\right|^p\,r\dr \leq C \eps^{p(k-1)+2}(\eps^{4 + p(3-p)}+ 1) \sim \eps^{6+2p} + \eps^{2+p(k-1)}.
\]
Let us estimate the second term:
\begin{align*}
\int_{\Omega} \left|\frac{2\varepsilon^{2}}{r^{2}+\varepsilon^{2}}r^{k-1}\sin{(k\phi)}\right|^{p}rd\phi \dr       &\leq 2^{p}\pi\varepsilon^{2p}\int_{0}^{R}\frac{r^{p(k-1)+1}}{\left(r^{2}+\varepsilon^{2}\right)^{p}}\dr\\
    &= 2^{p}\pi\varepsilon^{2p}\left(\int_{0}^{\varepsilon}\frac{r^{p(k-1)+1}}{(r^{2}+\varepsilon^{2})^{p}}dr + \int^{R}_{\varepsilon}\frac{r^{p(k-1)+1}}{(r^{2}+\varepsilon^{2})^{p}}dr \right)\\
    &\leq 2^{p}\pi\varepsilon^{2p}\left(\frac{1}{\varepsilon^{2p}}\int_{0}^{\varepsilon}r^{p(k-1)+1}dr +\int_{\varepsilon}^{R}r^{p(k-3)+1}dr\right)\\
    &\lesssim \eps^{p(k-1)+2} + \eps^{2p + p(k-3)+2}+\eps^{2p}
\end{align*}
The estimate holds both for $k\geq 3$ and $k\leq 3$ in the given formulation. Combining the estimates, we obtain the bound on gradient closeness.

{\bf Step 3. Barron norm estimates.} Since $u_k^r$ is a Barron function, it suffices to estimate the Barron norm of $u_{\eps, k}^i$, i.e.\ of
\[
2\pi \,u_{\eps,k}^i(x,y) = \log(x^2+y^2+\eps^2)\,P_k(x,y)\qquad\text{where } P_k(x,y) = \Im((x+iy)^k)
\]
is a homogeneous polynomial of degree $k$. We note that
\[
\frac{\partial^l}{\partial x^l} \frac{\partial^m}{\partial y^m}2\pi \,u_{\eps,k}^i(x,y) = \sum_{i=0}^l \sum_{j=0}^m \binom{l}i \binom{m}j\,\partial_x^i \,\partial_y^j\log(x^2+y^2+\eps^2)\cdot \partial_x^{l-i}\,\partial_y^{m-j}P_k(x,y).
\]
We note that $p_{k, l-i, m-j}:= \partial_x^{l-i}\,\partial_y^{m-j}P_k$ is a homogeneous polynomial of degree $k- (l-i)-(m-j)$ in $(x,y)$ and claim that 
\[
\partial_x^i \,\partial_y^j\log(x^2+y^2+\eps^2) = \sum_{s=1}^{i+j} \frac{q_{k,i,j, s}(x,y)}{\big(x^2+y^2+\eps^2)^s}
\]
for some homogeneous polynomial $q_{k,i,j,s}$ of degree $2s-i-j$ if $i+j>0$ (with coefficients which do not depend on $\eps$). This is easy to see if $i+j=1$ with
\[
\partial_x\,\log(x^2+y^2+\eps^2) = \frac{2x}{x^2+y^2+\eps^2}, \qquad
\partial_y\,\log(x^2+y^2+\eps^2) = \frac{2y}{x^2+y^2+\eps^2}, \qquad 2s - (i+j) = 2-1 =1
\]
and follows by induction on $i+j$ in general. For instance:
\begin{align*}
\partial_x^{i+1} \,\partial_y^j\log(x^2+y^2+\eps^2) &= \partial_x \sum_{s=1}^{i+j} \frac{q_{k,i,j, s}(x,y)}{\big(x^2+y^2+\eps^2)^s}\\
    &= \sum_{s=1}^{i+j} \left(\frac{\partial_x q_{k,i,j,s}}{(x^2+y^2+\eps^2)^s} - \frac{2s\,x\,q_{k,i,j,s}}{(x^2+y^2+\eps^2)^{s+1}}\right)\\
    &= \sum_{s=1}^{i+j+1} \frac{q_{k,i+1,j, s}(x,y)}{\big(x^2+y^2+\eps^2)^s}
\end{align*}
with
\[
q_{k,i+1,j, s} = \partial_x q_{k,i,j,s} - 2(s-1)x\,q_{k,i,j, s-1}.
\]
The claim follows by noting that the degree of $q_{k, i, j,s}$ is indeed a homogeneous polynomial of degree one lower than that of $q_{k,i,j,s}$ -- i.e.\ $2s - (i+1) -j$ -- and one higher than that of $q_{k,i,j,s}$ -- i.e.\ $2(s-1) -i - j+1 = 2s - (i+1) -j$.

Consider the case $l+m \geq k+1$. Then, if $i = j = 0$ we have $p_{k, l-i, m-j} = \partial_x^{l-i}\partial_y^{m-j} P_k = 0$ since $P_k$ has degree $k$. We therefore have
\[
\partial_x^l \,\partial_y^m u_{\eps,k^i}(x,y) = \sum_{s=1}^{l+m} \frac{\tilde P_{k,l,m,s}(x,y)}{(x^2+y^2+\eps^2)^s}
\]
where 
\[
\tilde P_{k,l,m,s} = \sum_{i=0}^l\sum_{j=0}^m \binom li \binom mj\,p_{k, l-i, m-j}\,q_{k,i,j,s}
\]
is a homogeneous polynomial of degree $k+2s - (l+m)$ which does not depend on $\eps>0$. In particular, if $l+m \geq k+1$, we have
\begin{align*}
\int_{B_R(0)}\left|\partial_x^l \,\partial_y^m u_{\eps,k^i}\right|^2\,\dx \dy
    &\leq C \int_0^R \left|\sum_{s=1}^{k+l}\frac{r^{k+2s - l -m}} {(r^2 + \eps^2)^{s}}\right|^2r \dr\\
    &\leq C \max_{s\geq 1} \int_0^R \frac{r^{k+3-l-m}}{r^2+\eps^2}\left(\frac{r^2}{r^2+\eps^2}\right)^{s-1}\dr\\
    &= C\int_0^R \frac{r^{k+3-l-m}}{r^2+\eps^2}\dr\\
    &\leq C\left(\eps^{-2}\int_0^\eps r^{k+3-l-m}\dr + \int_\eps^r r^{k+1-l-m}\dr\right)\\
    &\leq \begin{cases} C \left(\eps^{k+2-l-m} + R^{k+2 -l-m} \right) &l+m \geq k+3\\
    C \left(1 + |\log\eps| + |\log R| \right) & k+m = k+2\end{cases}.
\end{align*}
For fixed $R>0$, we conclude that
\[
\|u_{\eps,k}^i\|_{H^{k+2}(B_R}^2 \leq C \big(1+|\log\eps|\big), \qquad \|u_{\eps,k}^i\|_{H^{k+3}(B_R}^2\leq \frac C\eps.
\]
We can use Theorem \ref{theorem fractional sobolev embedding} and interpolation between $H^{k+2}$ and $H^{k+3}$ to conclude that $\|u_{\eps,k}^i\|_{\B_k(B_r(0))} \leq C(k,r)\,|\log\eps|$ exactly as in \cite{pde_relu}.

{\bf Step 4. Closeness of the second derivatives.} The same argument can be used to consider the case $l+m = 2 \leq k$. Then
\begin{align*}
\int_{B_R(0)} & |D^2(u_{\eps,k}-u_k)|^p\dx \leq C^p \int_0^R \bigg\{\big( \log (r^2+\eps^2) - \log(r^2)\big)^pr^{(k-2)p} + \left|\frac r{r^2+\eps^2} - \frac1{r}\right|^pr^{(k-1)p}\\
    &\hspace{5.7cm}+ \left|\frac{1}{r^2+\eps^2} - \frac1{r^2}\right|^pr^{kp}+ \left|\frac{r^2}{(r^2+\eps^2)^2} - \frac1{r^2}\right|^pr^{kp} \bigg\}r\dr\\
    &\leq C^p\int_0^R\log^p\left(1+\frac{\eps^2}{r^2}\right)r^{(k-2)p+1} + \frac{\eps^{2p}r^{(k-2)p+1}}{(\eps^2+r^2)^p} + \frac{\eps^{2p}r^{kp+1}}{(\eps^2+r^2)^{2p}} + \frac{\eps^{4p}r}{(\eps^2+r^2)^{2p}} \dr\\
    &\leq  C^p\int_0^R\log^p\left(1+\frac{\eps^2}{r^2}\right)r^{(k-2)p+1} + \frac{\eps^{2p}r^{(k-2)p+1}}{(\eps^2+r^2)^p} + \frac{\eps^{4p}r}{(\eps^2+r^2)^{2p}} \dr
\end{align*}
for a constant $C>0$ that does not depend on $p$. We estimate the integrals separately:
\begin{align*}
\int_0^R \frac{\eps^{4p}r}{(\eps^2+r^2)^{4p}} \dr
    &\leq \int_0^\eps \frac{\eps^{4p}r}{\eps^{4p}} \dr + \int_\eps^R \eps^{4p} r^{1-4p}\dr\\
    &\leq \frac{\eps^2}2 + \frac{\eps^{2-4p}\eps^{4p}}{4p-2} \sim \eps^2\\
\int_0^R \frac{\eps^{2p}r^{(k-2)p+1}}{(\eps^2+r^2)^p} \dr
    &\leq \int_0^{\eps} r^{(k-2)p+1} \dr + \int_{\eps}^R\eps^{2p}r^{(k-4)p+1}\dr\\
    &\sim \eps^{(k-2)p+2} + \eps^{2p + (k-4) p + 2}\\
    &= \eps^{2 + (k-2)p}
\end{align*}
if $(k-4)p+1 <0$ -- otherwise, the second term is merely a constant.  Finally:
\begin{align*}
\int_0^R \log^p\left(1+\frac{\eps^2}{r^2}\right)&r^{(k-2)p+1} \dr
    = \eps^{(k-2)p+2}\int_0^R\log^p\left(1+\bigg(\frac{\eps}{r}\bigg)^2\right)\left(\frac r\eps\right)^{(k-2)p+3} \frac{\eps}{r^2}\dr\\
    &= \eps^{(k-2)p+2}\int_{\eps/R}^\infty \log^p(1+z^2) z^{-(3+(k-2)p)}\dz\\
    &\leq \eps^{(k-2)p+2} \left(\int_{\eps/R}^1z^{2p}z^{-(k-2)p-3}\dz + \int_1^\infty \log^p(1+z^2)z^{-3 - (k-2)p}\dz\right)\\
    &\leq \eps^{(k-2)p+2}\int_{\eps/R}^1 z^{(4-k)p-3}\dz + \eps^{(k-2)p+2}\int_1^\infty \frac{\log^p(1+z^2)}{z^3}\dz.
\end{align*}
In \cite{pde_relu}, the bound
\[
\int_1^\infty \frac{\log^p(1+z^2)}{z^3}\dz \leq \Gamma(p+1)
\]
is established, and since $(4-k)p-3 \geq 2p-3 >1$ unless $k=2$ and $p=1$, we observe that
\[
\eps^{(k-2)p+2}\int_{\eps/R}^1 z^{(4-k)p-3}\dz \sim \frac{\eps^{(k-2)p+2 + (4-k)p -2}}{R^{(4-k)p-2}} = \frac{\eps^{2p}}{R^{(4-k)p-2}}.
\]
If $k=2$ and $p=1$, the rate is $\eps^2|\log\eps|$ instead since $(4-k)p-3 = 2-3 = -1$. Thus overall
\begin{align*}
\int_{B_R(0)}  |D^2(u_{\eps,k}-u_k)|^p\dx &\leq C\left(\eps^2 + \eps^{(k-2)p+2} + \eps^{2p}\right) \leq C \eps^2
\end{align*}
for since $(k-2)p\geq 0$ and $2p\geq 1$. Again, we make an exception for $k=2$ and $p=1$.
\end{proof}

In particular, the quality of approximation in $H^1$ and $H^2$ is comparable for $k\geq 2$. From this point of view, we have no reason to prefer either the PINN method (strong formulation) or the Deep Ritz method (variational formulation) to solving the Poisson equation by neural networks.

\section{Open Questions, Limitations, Future Work}\label{section conclusion}

Several important questions remain open, chief among them: If $\alpha\in (1, \infty)\setminus \mathbb N$, is there a solution to the problem
\[
\begin{pde}
    -\Delta u &= u &\text{in }\R^2_+\\
    u &= \ReLU^\alpha(x_1) & \text{on }\partial \R^2_+
\end{pde}
\]
in the $\ReLU^\alpha$-Barron space? Or, if it is not, is the failure merely logarithmic as it is in the integer case? As illustrated above, this question is of interest for a wide class of neural PDE solvers, among them the Deep Ritz method, PINNs and DeepONets.

On the technical side, we note that we only established upper bounds on the order of approximation, but no lower bounds. Establishing matching upper and lower bounds is more subtle here than in many other applications since our bounds are of the form
\[
\min_{\|w\|_{\B_k}\leq s} \|u_k - w\|_{W^{m,p}} \leq C\,\exp\left(\gamma s\right)
\]
for some constants $C = C_{k,m,p}$ and $\gamma = \gamma_{k,m,p}$ depending on the power $k$ of the activation function and the norm of the $W^{m,p}$-space in which we are approximating $u_k$. The order of approximation is always exponential in $s$, but the speed may yet differ. 

For ease of presentations, we focussed on PDEs on the half space. Another important question is whether the same results apply to PDEs in compact domains. A positive answer is given in \cite{pde_relu} for partial differential equations on rectangular domains in two dimension, but not for more general domains or powers of ReLU activation.

Finally, we note that neural networks with more than one hidden layer are more popular in applications than the simple perceptron architecture studied in this article. Recently, several function space theories were proposed for multi-layer perceptron (MLP) architectures \cite{wojtowytsch2020banach, chen2024neural}. Whether and how the arguments of this analysis could be applied in this highly relevant setting remains open.

\section*{Acknowledgements}
SW gratefully acknowledges funding by the National Science Foundation through Award \# 2424801 which facilitated this research.

\appendix

\section{Proof of Lemma \ref{lemma one dimension derivatives}: One-dimensional Barron functions and derivatives}

\begin{proof}[Proof of Lemma \ref{lemma one dimension derivatives}]
    {\bf First direction.} Assume for the moment that $f^{(k+1)}\in L^1(\R)$ is a function such that
    \[
    \int_\R \big|f^{(k+1)}(x)\big|\,\big(1+|x|^k\big)\dx < +\infty.
    \]
    By the Cauchy formula for repeated integration (or Taylor's theorem with remainder in integral form), we have the Riemann-Liouville integral identity 
    \[
    f(x) = \sum_{i=0}^k f^{(i)}(0)\,x^i + \frac1{k!}\int_0^x (x-t)^i \,f^{(k+1)}(t)\dt = \sum_{i=0}^k f^{(i)}(0)\,x^i + \frac1{k!}\int_0^\infty f^{(k+1)}(t)\,\sigma_k(x-t)\dt
    \]
    for $x>0$ and more generally
    \[
    f(x) = \sum_{i=0}^k f^{(i)}(0)\,x^i + \frac1{k!}\int_0^\infty f^{(k+1)}(t)\,\sigma_k(x-t)\dt + \frac1{k!}\int_{-\infty}^0 f^{(k+1)}(t)\,\sigma_k(-x+t)\dt.
    \]
    Evidently, the integral contribution can be written in the form
    \[
    \mathbb E_{(a,w,b)\sim\pi} \big[a\,\sigma(w^Tx+b)\big]
    \]
    where $\pi= \phi_{1\sharp}\sharp \mu_1 + \phi_{2\sharp}\mu_2$ for
    \[
    \mu_1 = \frac{|f^{(k+1)}(t)|\, 1_{\{t<0\}}}{2 \int_{-\infty}^0|f^{(k+1)}(s)\ds} \cdot \mathcal L^1, \qquad \mu_1 = \frac{|f^{(k+1)}(t)|\, 1_{\{t>0\}}}{2 \int_{0}^\infty|f^{(k+1)}(s)\ds} \cdot \mathcal L^1
    \]
    and the maps $\phi_1, \phi_2:\R\to \R^3$ mapping $t$ to $(a,w,b)$ are given by
    \begin{align*}
    \phi_1(t) &= \left( \frac2{k!}\, sign(f^{(k+1})(t))\,1_{\{t<0\}} \int_{-\infty}^0|f^{(k+1)}(s)\ds, \:-1, \:t\right)\\
    \phi_2(t) &= \left( \frac2{k!}\, sign(f^{(k+1})(t))\,1_{\{><0\}} \int_{0}^\infty|f^{(k+1)}(s)\ds, \:1,\: -t\right).
    \end{align*}
    Since the bound
    \[
    \mathbb E_{(a,w,b)\sim\pi} \big[|a|\,\big(|w|+ |b|)^k\big] = \int_\R |f^{(k+1)}(t)\big(1+|t|\big)^k\dt
    \]
    holds by assumption, the integral part is a $\sigma_k$-Barron function. The same is true for the finite sum since $x^k = \sigma_k(x) + (-1)^{k} \,\sigma_k(-x)$ and $x\mapsto x^j= \frac{d^{k-j}}{dx^{k-j}}x^k$, i.e.\ $x^j$ is in the closure of the subspace $span(\{(x-h)^k : h \in\R\}$ of the space of polynomials of degree at most $k$. Since the space is finite-dimensional, it is closed, and $x^j$ is an element of $\B_k$. For a more quantitative argument, see \cite[Proposition 2.1]{heeringa}.

    The same is true by approximation if $f^{(k+1)} \cdot \L^1$ is replaced by a more general Radon measure, i.e.\ if the $k+1$-th derivative $f^{(k+1)}$ is merely a Radon measure, not a function.

    {\bf Second direction.} Note that
    \[
    \frac{d^k}{dx^k}\sigma_k(wx+b) = k!\,w^k\,\sigma_0(wx+b)
    \]
    i.e.\ in the generalized sense
    \[
    \int_\R \left|\frac{d^{k+1}}{dx^{k+1}}\sigma_k(wx+b)\right|\dx = k!\,|w|^k
    \]
    since $\lim_{x\to\infty}\big|\sigma_0(wx+b) - \sigma_0(-wx+b)\big| = 1$. Additionally
    \[
    \int_\R \left|\frac{d^{k+1}}{dx^{k+1}}\sigma_k(wx+b)\right|\,|x|^k\dx = \int_\R \left|\frac{d}{dx}\sigma_0(wx+b)\right| |wx|^k\dx.
    \]
    For $w\neq 0$, the (distributional) derivative of $\sigma_0(wx+b)$ is a Dirac mass $\delta_{-b/w}$ since the `jump' in $\sigma_0$ is of height one independently of the argument. Thus:
    \[
    \int_\R \left|\frac{d^{k+1}}{dx^{k+1}}\sigma_k(wx+b)\right|\,|x|^k\dx = |b|^k
    \]
    unless $w=0$. In particular, we find that
    \[
    \int_\R \left|\frac{d^{k+1}}{dx^{k+1}}\sigma_k(wx+b)\right|\dx = |w|^k+|b|^k. 
    \]
    If $f(x) = \E_{(a,w,b)\sim\pi} \big[a\,\sigma_k(wx+b)\big]$, we heuristically sketch that
    \begin{align*}
    \int_\R \big|f^{(k+1)}(x)\big|\dx 
        &= \int_\R \left|\frac{d^{k+1}}{dx^{k+1}} \E_\pi \big[a\sigma_k(wx+b)\big] \right| \dx &&\leq \E_\pi\bigg[|a|\int_\R \left|\frac{d^{k+1}}{dx^{k+1}} \sigma_k(wx+b) \right| \dx\bigg]\\
        &= k!\,\E_\pi \big[|a|\big(|w|^k+|b|^k\big)] &&\sim k!\,\E_\pi \big[|a|\big(|w|+|b|\big)^k\big].
    \end{align*}
    More technically, we observe that the distributional derivative $f^{(k+1)}$ is the measure given by the push-forward
    \[
    \phi_\sharp \big(a\,|w|^k\big)\cdot \pi, \qquad \phi(a,w,b) = - \frac{b}{|w|}.
    \]
    A detailed version of the argument can be found in \cite[Example 4.1]{representationformulas}.
\end{proof}

\section{Proof of Lemma \ref{lemma uk is not barron}: Analysis of Barron component}\label{appendix tedious calculation}

Recall that the $l$-th derivative of $\arctan$ is
\[
\arctan^{(l)}(x) = (-1)^{l-1} \frac{(x+iy)^{-l} - (x-iy)^{-l}}{2i}\,(l-1)!
\]
Using this, we compute the $k+1$-th derivative of
\begin{align*}
f_k(x) &= \arctan(x/y)\,\frac{(x+iy)^k + (x-iy)^k}2
\end{align*}
as
\begin{align*}
\partial_x^{k+1}\,f_k(x) &= \sum_{l=0}^{k+1} \binom{k+1}l \,\left(\partial_x^l \arctan(x/y)\right) \left(\partial_x^{k+1-l}\frac{(x+iy)^k + (x-iy)^k}2\right)\\
	&= \sum_{l=1}^{k+1} \binom{k+1}l \,y^{-l} \arctan^{(l)}(x/y) \left(\prod_{j=l}^k l\right)\frac{(x+iy)^{l-1} + (x-iy)^{l-1}}2 \\
	&= \sum_{l=1}^{k+1} \binom{k+1}l\,(-1)^{l-1}\,(l-1)!\frac{(x/y+i)^{-l} - (x/y-i)^{-l}}{2i}\frac{k!}{(l-1)!} \frac{(x+iy)^{l-1} + (x-iy)^{l-1}}{2y^l}\\
	&= \frac{k!}{4i} \sum_{l=1}^{k+1} (-1)^{l-1} \binom{k+1}l \big\{(x+iy)^{l-1} + (x-iy)^{l-1}\big\}\big\{(x+iy)^{-l}- (x-iy)^{-l}\big\}.
\end{align*}
Note that
\begin{align*}
\big(z^{l-1} + \bar z^{l-1}\big)\big(z^{-l} - \bar z^{-l}\big) &= z^{-1} + \bar z^{l-1}z^{-l} - z^{l-1}\bar z^{-l} - \bar z^{-1}\\
	&= \frac1r \big(e^{-i\phi}  + e^{(1-2l)i\phi} - e^{(2l-1)i\phi} - e^{i\phi} \big)
\end{align*}
and that
\begin{align*}
\sum_{l=1}^{k+1} (-1)^{l-1} \binom{k+1}l &= -\left(\sum_{l=0}^{k+1} \binom{k+1}{l} (-1)^l 1^{k+1-l} - 1\right) = - \left((1-1)^{k+1} -1\right) = 1\\
\sum_{l=1}^{k+1} (-1)^{l-1} \binom{k+1}l e^{-2li\phi} &= -\left(\sum_{l=0}^{k+1} \binom{k+1}{l} (-e^{-2i\phi})^l 1^{k+1-l} - 1\right) = -\left(\big(1-e^{-2i\phi}\big)^{k+1}-1\right)
\end{align*}
so
\begin{align*}
\partial_x^{k+1}\,f_k(x) &= - \frac{k!}{4i\,}\left(e^{-i\phi} - e^{i\phi} \big\{\big(1-e^{-2i\phi}\big)^{k+1}-1\big\} + e^{-i\phi}\big\{\big(1-e^{2i\phi}\big)^{k+1} -1\big\} - e^{i\phi} \right)\\
	&= - \frac{k!}{4i\,r} \left( e^{i\phi}\big(1-e^{-2i\phi}\big)^{k+1} - e^{-i\phi}\big(1-e^{2i\phi}\big)^{k+1}\right)\\
	&= \frac{k!}{2r} \,\Im \big(e^{i\phi}(1-e^{-2i\phi})^{k+1}\big)
\end{align*}
where $r = \sqrt{x^2+y^2}$ and $\phi = \cot^{-1} (x/y)$. The quotient is well-defined since $y\neq 0$ in $\R^2_+$. Note that
\[
e^{i\phi}(1-e^{-2i\phi})^{k+1} = \big(1 - i\phi+O(\phi^2)\big) \left(-2i\phi + \frac {\phi^2}2 + O(\phi^3)\right)^{k+1} = O(\phi^{k+1}).
\]

\bibliographystyle{alpha}
\bibliography{./bibliography.bib}

\newcommand{\etalchar}[1]{$^{#1}$}
\begin{thebibliography}{CDCG{\etalchar{+}}22}

\bibitem[AKL{\etalchar{+}}24]{azizzadenesheli2024neural}
Kamyar Azizzadenesheli, Nikola Kovachki, Zongyi Li, Miguel Liu-Schiaffini, Jean
  Kossaifi, and Anima Anandkumar.
\newblock Neural operators for accelerating scientific simulations and design.
\newblock {\em Nature Reviews Physics}, pages 1--9, 2024.

\bibitem[Bar93]{barron1993}
Andrew~R Barron.
\newblock Universal approximation bounds for superpositions of a sigmoidal
  function.
\newblock {\em IEEE Transactions on Information theory}, 39(3):930--945, 1993.

\bibitem[BF09]{busam2009complex}
Rolf Busam and Eberhard Freitag.
\newblock {\em Complex analysis}.
\newblock Springer, 2009.

\bibitem[BF23]{boursier2023penalising}
Etienne Boursier and Nicolas Flammarion.
\newblock Penalising the biases in norm regularisation enforces sparsity.
\newblock {\em Advances in Neural Information Processing Systems},
  36:57795--57824, 2023.

\bibitem[CDCG{\etalchar{+}}22]{cuomo2022scientific}
Salvatore Cuomo, Vincenzo~Schiano Di~Cola, Fabio Giampaolo, Gianluigi Rozza,
  Maziar Raissi, and Francesco Piccialli.
\newblock Scientific machine learning through physics--informed neural
  networks: Where we are and what’s next.
\newblock {\em Journal of Scientific Computing}, 92(3):88, 2022.

\bibitem[Che24]{chen2024neural}
Zhengdao Chen.
\newblock Neural hilbert ladders: Multi-layer neural networks in function
  space.
\newblock {\em Journal of Machine Learning Research}, 25(109):1--65, 2024.

\bibitem[CLL21]{chen2021representation}
Ziang Chen, Jianfeng Lu, and Yulong Lu.
\newblock On the representation of solutions to elliptic pdes in barron spaces.
\newblock {\em Advances in neural information processing systems},
  34:6454--6465, 2021.

\bibitem[CMW{\etalchar{+}}21]{cai2021physics}
Shengze Cai, Zhiping Mao, Zhicheng Wang, Minglang Yin, and George~Em
  Karniadakis.
\newblock Physics-informed neural networks (pinns) for fluid mechanics: A
  review.
\newblock {\em Acta Mechanica Sinica}, 37(12):1727--1738, 2021.

\bibitem[CPV23]{caragea2023neural}
Andrei Caragea, Philipp Petersen, and Felix Voigtlaender.
\newblock Neural network approximation and estimation of classifiers with
  classification boundary in a barron class.
\newblock {\em The Annals of Applied Probability}, 33(4):3039--3079, 2023.

\bibitem[Cyb89]{cybenko1989approximation}
George Cybenko.
\newblock Approximation by superpositions of a sigmoidal function.
\newblock {\em Mathematics of control, signals and systems}, 2(4):303--314,
  1989.

\bibitem[DJL{\etalchar{+}}21]{duan2021convergence}
Chenguang Duan, Yuling Jiao, Yanming Lai, Xiliang Lu, and Zhijian Yang.
\newblock Convergence rate analysis for deep ritz method.
\newblock {\em arXiv preprint arXiv:2103.13330}, 2021.

\bibitem[DMZ22]{dondl2022uniform}
Patrick Dondl, Johannes M{\"u}ller, and Marius Zeinhofer.
\newblock Uniform convergence guarantees for the deep ritz method for nonlinear
  problems.
\newblock {\em Advances in Continuous and Discrete Models}, 2022(1):49, 2022.

\bibitem[Dob10]{dobrowolski2010angewandte}
Manfred Dobrowolski.
\newblock {\em Angewandte Funktionalanalysis: Funktionalanalysis,
  Sobolev-R{\"a}ume und elliptische Differentialgleichungen}.
\newblock Springer-Verlag, 2010.

\bibitem[Eva22]{evans}
Lawrence~C Evans.
\newblock {\em Partial differential equations}, volume~19.
\newblock American Mathematical Society, 2022.

\bibitem[EW20a]{wojtowytsch2020banach}
Weinan E and Stephan Wojtowytsch.
\newblock On the banach spaces associated with multi-layer relu networks:
  Function representation, approximation theory and gradient descent dynamics.
\newblock {\em arXiv preprint arXiv:2007.15623}, 2020.

\bibitem[EW20b]{wojtowytsch2020some}
Weinan E and Stephan Wojtowytsch.
\newblock Some observations on high-dimensional partial differential equations
  with barron data.
\newblock {\em arXiv preprint arXiv:2012.01484}, 2020.

\bibitem[EW21]{wojtowytsch2021kolmogorov}
Weinan E and Stephan Wojtowytsch.
\newblock Kolmogorov width decay and poor approximators in machine learning:
  Shallow neural networks, random feature models and neural tangent kernels.
\newblock {\em Research in the mathematical sciences}, 8(1):1--28, 2021.

\bibitem[EW22]{representationformulas}
Weinan E and Stephan Wojtowytsch.
\newblock Representation formulas and pointwise properties for barron
  functions.
\newblock {\em Calculus of Variations and Partial Differential Equations},
  61(2):46, 2022.

\bibitem[Hor91]{hornik1991approximation}
Kurt Hornik.
\newblock Approximation capabilities of multilayer feedforward networks.
\newblock {\em Neural networks}, 4(2):251--257, 1991.

\bibitem[Hor93]{hornik1993some}
Kurt Hornik.
\newblock Some new results on neural network approximation.
\newblock {\em Neural networks}, 6(8):1069--1072, 1993.

\bibitem[HS99]{hile1999gradient}
GN~Hile and Alexander Stanoyevitch.
\newblock Gradient bounds for harmonic functions {L}ipschitz on the boundary.
\newblock {\em Applicable Analysis}, 73(1-2):101--113, 1999.

\bibitem[HSSB24]{heeringa}
Tjeerd~Jan Heeringa, Len Spek, Felix~L Schwenninger, and Christoph Brune.
\newblock Embeddings between {B}arron spaces with higher-order activation
  functions.
\newblock {\em Applied and Computational Harmonic Analysis}, 73:101691, 2024.

\bibitem[JLL{\etalchar{+}}21]{jiao2021error}
Yuling Jiao, Yanming Lai, Yisu Lo, Yang Wang, and Yunfei Yang.
\newblock Error analysis of deep ritz methods for elliptic equations.
\newblock {\em arXiv preprint arXiv:2107.14478}, 2021.

\bibitem[KLL{\etalchar{+}}23]{kovachki2023neural}
Nikola Kovachki, Zongyi Li, Burigede Liu, Kamyar Azizzadenesheli, Kaushik
  Bhattacharya, Andrew Stuart, and Anima Anandkumar.
\newblock Neural operator: Learning maps between function spaces with
  applications to pdes.
\newblock {\em Journal of Machine Learning Research}, 24(89):1--97, 2023.

\bibitem[LJK19]{lu2019deeponet}
Lu~Lu, Pengzhan Jin, and George~Em Karniadakis.
\newblock Deeponet: Learning nonlinear operators for identifying differential
  equations based on the universal approximation theorem of operators.
\newblock {\em arXiv preprint arXiv:1910.03193}, 2019.

\bibitem[LKA{\etalchar{+}}20]{li2020fourier}
Zongyi Li, Nikola Kovachki, Kamyar Azizzadenesheli, Burigede Liu, Kaushik
  Bhattacharya, Andrew Stuart, and Anima Anandkumar.
\newblock Fourier neural operator for parametric partial differential
  equations.
\newblock {\em arXiv preprint arXiv:2010.08895}, 2020.

\bibitem[LL22]{lu2022priori}
Jianfeng Lu and Yulong Lu.
\newblock A priori generalization error analysis of two-layer neural networks
  for solving high dimensional schr{\"o}dinger eigenvalue problems.
\newblock {\em Communications of the American Mathematical Society},
  2(1):1--21, 2022.

\bibitem[LLW21]{lu2021priori}
Yulong Lu, Jianfeng Lu, and Min Wang.
\newblock A priori generalization analysis of the deep ritz method for solving
  high dimensional elliptic partial differential equations.
\newblock In {\em Conference on learning theory}, pages 3196--3241. PMLR, 2021.

\bibitem[LMW20]{li2020complexity}
Zhong Li, Chao Ma, and Lei Wu.
\newblock Complexity measures for neural networks with general activation
  functions using path-based norms.
\newblock {\em arXiv preprint arXiv:2009.06132}, 2020.

\bibitem[MEWW20]{ma2020towards}
Chao Ma, Weinan E, Stephan Wojtowytsch, and Lei Wu.
\newblock Towards a mathematical understanding of neural network-based machine
  learning: what we know and what we don't.
\newblock {\em arXiv preprint arXiv:2009.10713}, 2020.

\bibitem[MZ19]{muller2019deep}
Johannes M{\"u}ller and Marius Zeinhofer.
\newblock Deep ritz revisited.
\newblock {\em arXiv preprint arXiv:1912.03937}, 2019.

\bibitem[PW24]{park2024qualitative}
Josiah Park and Stephan Wojtowytsch.
\newblock Qualitative neural network approximation over $\mathbb r$ and
  $\mathbb c$: Elementary proofs for analytic and polynomial activation.
\newblock In {\em Explorations in the Mathematics of Data Science: The
  Inaugural Volume of the Center for Approximation and Mathematical Data
  Analytics}, pages 31--64. Springer, 2024.

\bibitem[RPK19]{raissi2019physics}
Maziar Raissi, Paris Perdikaris, and George~E Karniadakis.
\newblock Physics-informed neural networks: A deep learning framework for
  solving forward and inverse problems involving nonlinear partial differential
  equations.
\newblock {\em Journal of Computational physics}, 378:686--707, 2019.

\bibitem[SX22]{siegel2022high}
Jonathan~W Siegel and Jinchao Xu.
\newblock High-order approximation rates for shallow neural networks with
  cosine and {ReLU}$^k$ activation functions.
\newblock {\em Applied and Computational Harmonic Analysis}, 58:1--26, 2022.

\bibitem[SX23]{siegel2023characterization}
Jonathan~W Siegel and Jinchao Xu.
\newblock Characterization of the variation spaces corresponding to shallow
  neural networks.
\newblock {\em Constructive Approximation}, 57(3):1109--1132, 2023.

\bibitem[Woj22]{wojtowytsch2022optimal}
Stephan Wojtowytsch.
\newblock Optimal bump functions for shallow relu networks: Weight decay, depth
  separation and the curse of dimensionality.
\newblock {\em arXiv preprint arXiv:2209.01173}, 2022.

\bibitem[Woj25]{pde_relu}
Stephan Wojtowytsch.
\newblock A note on elliptic regularity theory in barron spaces.
\newblock {\em in preparation}, 2025.

\bibitem[Wu23]{wu2023embedding}
Lei Wu.
\newblock Embedding inequalities for barron-type spaces.
\newblock {\em arXiv preprint arXiv:2305.19082}, 2023.

\bibitem[WW20]{wojtowytsch2020can}
Stephan Wojtowytsch and E~Weinan.
\newblock Can shallow neural networks beat the curse of dimensionality? a mean
  field training perspective.
\newblock {\em IEEE Transactions on Artificial Intelligence}, 1(2):121--129,
  2020.

\bibitem[Y{\etalchar{+}}18]{yu2018deep}
Bing Yu et~al.
\newblock The deep ritz method: a deep learning-based numerical algorithm for
  solving variational problems.
\newblock {\em Communications in Mathematics and Statistics}, 6(1):1--12, 2018.

\end{thebibliography}

\end{document}